\documentclass{amsart}
\usepackage{graphicx} 
\usepackage{amsmath}
\usepackage{amsfonts, amssymb}
\usepackage{amsthm}
\usepackage{mathabx}
\usepackage[dvipsnames]{xcolor}
\usepackage{xcolor}
\usepackage{tikz-cd}
\usetikzlibrary{decorations.markings,arrows,automata,arrows,backgrounds}
\usepackage{tikz}
\usetikzlibrary{arrows.meta, decorations.markings, calc}
\usetikzlibrary{calc}
\usepackage{mathrsfs}
\usepackage{algorithm}
\usepackage{algpseudocode}

\theoremstyle{definition}
\newtheorem{theorem}{Theorem}[section]

\newtheorem{proposition}[theorem]{Proposition}
\newtheorem{definition}[theorem]{Definition}
\newtheorem{lemma}[theorem]{Lemma}
\newtheorem{remark}[theorem]{Remark}

\newtheorem{example}[theorem]{Example}

\definecolor{hexblue}{HTML}{0021A5}
\definecolor{fgreen}{RGB}{34,139,34}

\newcommand{\zr}{{\mathbb R}}
\newcommand{\zz}{{\mathbb Z}}

\newcommand{\zs}{{\mathbb S}}
\newcommand{\sm}{{\mathcal{SM}}}

\newcommand{\smpure}{{\mathcal{SM}}_{\textrm{pure}}}

\newcommand\sco{{\searrow \hspace{-6 pt}\searrow\hspace{3 pt}}}

\title{Complexes of strong discrete Morse matchings}
\author{Kevin P.~Knudson}
\author{Abigail Owens--White}
\address{Department of Mathematics, University of Florida, Gainesville, FL 32611}

\date{\today}

\email{kknudson@ufl.edu}
\email{aowens2@ufl.edu}

\keywords{strong discrete Morse theory, strong collapse }
\subjclass[2020]{(Primary) 57Q70}

\begin{document}

\begin{abstract}
Using the strong discrete Morse theory developed by Fern\'andez \cite{Fernandez2026}, we define the complex $\sm(K)$ of strong discrete Morse matchings on a simplicial complex $K$, as well as the pure subcomplex $\smpure(K)$ generated by the facets in $\sm(K)$ of maximal dimension. For most complexes $K$ these objects are proper subcomplexes of the complexes ${\mathfrak M}(K)$ and ${\mathfrak M}_{\textrm{pure}}(K)$ defined by Chari--Joswig \cite{ChariJoswig2005} using all Morse matchings on $K$. The homotopy types of the latter are not well-understood in general, but they are known when $K$ is the path $P_n$ with $n$ edges, the cycle $C_n$ with $n$ edges, the star $S_n$ with $n$ leaves, the $n$-simplex $\Delta^n$ ($n\le 3$), and the boundary $\partial\Delta^n$ ($n\le 3$). in this paper we compute the homotopy types of $\smpure(C_n)$ and $\sm(K)$ for $K=P_n,S_n,\Delta^n,\partial\Delta^n$ for all $n$. We also compute the homology of $\sm(C_n)$ for $n\le 21$.
\end{abstract}

\maketitle

\section{Introduction}  

Discrete Morse theory was developed by Forman \cite{Forman1998} as a combinatorial analogue of classical smooth Morse theory on manifolds. A discrete Morse function on a simplicial complex $K$ is an assignment $f$ of a real number to each simplex of $K$ that generically increases with dimension in the sense that any $p$-simplex $\sigma$ in $K$ has at most one $(p+1)$-simplex $\tau$ containing it with $f(\tau)\le f(\sigma)$. These pairs determine a partial matching on the Hasse diagram of $K$ called the discrete gradient vector field of $f$ or a Morse matching; reversing those directed edges in the matching yields an acyclic directed graph. Conversely, any acyclic partial matching on the Hasse diagram is the gradient of a discrete Morse function on $K$ (infinitely many, in fact). Moreover, collapsing the pairs in decreasing order as defined by $f$ transforms $K$ into a simpler simplicial complex with the same homotopy type. Discrete Morse theory is therefore a powerful tool for topological simplification.

In \cite{ChariJoswig2005}, Chari and Joswig define and analyze $\mathfrak{M}(K)$, the \textit{complex of discrete Morse matchings}, along with the subcomplex ${\mathfrak M}_\textrm{pure}(K)$ generated by the facets of maximal dimension. The former is the simplicial complex whose simplices are the Morse matchings on the complex $K$; that is, the vertices are the edges of the Hasse diagram of $K$ and the simplices are those sets of edges that give Morse matchings upon their reversal. For $K$ a graph, $\mathfrak{M}(K)$ is exactly the complex of rooted forests on $K$. Kozlov \cite{Kozlov1999} studied these; the case of a double-directed cycle gives the homotopy type of $\mathfrak{M}(C_n)$ and the homotopy type depends on the residue of $n \bmod 3$. Chari and Joswig independently found that $\mathfrak{M}_{pure}(C_n)$ is homotopic to $\mathbb{S}^2\vee\mathbb{S}^{n-2}\vee\mathbb{S}^{n-2}$. They also compute the homotopy type of $\mathfrak{M}(\Delta^1)$ and $\mathfrak{M}(\Delta^2)$, and compute the f-vector and homology of $\mathfrak{M}(\Delta^3)$. Recent work of Scoville \cite{Scoville2026} determines the homotopy type of ${\mathfrak M}(\Delta^3)$, but beyond these results, explicit calculations of the homotopy type of $\mathfrak{M}(K)$ remain elusive.

In \cite{Fernandez2026}, Fern\'andez introduces \textit{strong discrete Morse theory} as another tool to simplify simplicial complexes while preserving homotopy type. This variation of discrete Morse theory utilizes the strong collapses for simplicial complexes studied in \cite{BarmakMinian2009} by Barmak and Minian. Building on Whitehead's collapses and simple homotopy theory, Barmak and Minian's \textit{strong homotopy theory} defines strong collapses to eliminate \textit{dominated vertices} and their open stars. Dominated vertices are defined as those vertices in a simplicial complex $K$ whose link in $K$ is a simplicial cone. Strong collapses preserve the homotopy type of $K$ and produce a minimal irreducible subcomplex, called the \textit{strong core} of $K$. An arbitrary simplicial complex may not have any dominated vertices, so Fern\'andez uses filtrations of $K$ to define the concept of an \textit{internal strong collapse} in $K$. This process finds and collapses dominated vertices in subcomplexes of $K$ in a systematic way and the main result of \cite{Fernandez2026} is that any sequence of these internal strong collapses yields a {\em regular} CW-complex that is homotopy equivalent to $K$.

In this paper, we present analogous results to those of Chari and Joswig, but in the realm of Fern\'andez's strong discrete Morse theory. We first define the \textit{complex of strong discrete Morse matchings} $\sm(K)$ as the simplicial complex generated by all strong discrete Morse matchings on $K$. This complex has a pure counterpart, denoted $\smpure(K)$, generated by the facets of $\sm(K)$ of maximal dimension. The main examples that we consider are $K=P_n,C_n,\Delta^n,$ and $\partial\Delta^n$. In Theorem \ref{pathhtyptype} we compute the homotopy type of $\sm(P_n)$:
$$\sm(P_n) \simeq \begin{cases}
    \zs^{2k-1} & n=4k \\
    \zs^{2k} & n=4k+1 \\
    \ast & n=4k+2,4k+3.
\end{cases}$$
In Theorem \ref{thm:cnpure}, compute the homotopy type of $\smpure(C_n)$, with a pattern emerging for $n\geq6$:
\begin{equation*}
\smpure(C_n)\simeq
    \begin{cases}
        \mathbb{S}^{2}\vee\mathbb{S}^{2k-2}\vee\mathbb{S}^{2k-2}\vee\mathbb{S}^{2k-2}\vee\mathbb{S}^{2k-2} & \text{if } n=3k\\
        \mathbb{S}^{2}\vee\mathbb{S}^{2k-1}\vee\mathbb{S}^{2k-1} & \text{if } n=3k+1, 3k+2\\
    \end{cases}
\end{equation*}
This computation is surprisingly complicated, much more involved than Chari--Joswig's computation of the homotopy type of ${\mathfrak M}_\textrm{pure}(C_n)$. The homotopy types of $\sm(C_n)$ are much more difficult to compute, and follow no discernible pattern. We instead give the homology types of this complex up to $n=21$.

The final examples we consider are the $n$-simplex and its boundary. We find that $\sm(\Delta^n)$ has the homotopy type of a wedge of $n^n$ copies of $\zs^{n-1}$, and $\sm(\partial\Delta^n)$ has the homotopy type of a wedge of $f(n)$ copies of $\zs^{n-2}$, where
\begin{equation*}
    f(n) = \frac{n^2 + 3n}{2} + \sum_{k=1}^{n-2} \binom{n+1}{k}(n-k)^{n-k}.
\end{equation*}
These computations are nontrivial, but given that the homotopy types of ${\mathfrak M}(\Delta^n)$ and ${\mathfrak M}(\partial\Delta^n)$ remain a mystery for $n\ge 4$ it is curious that we are able to understand the strong collapses relatively easily.
The final section of the paper considers for $\sm(K)$ various related results analogous to those known for ${\mathfrak M}(K)$. We also compute the homotopy types of some incidence complexes related to some of our $\sm(K)$.

\section{Discrete Morse Theory}

We provide a quick review of discrete Morse theory before introducing the strong version. Recall that for $K$ a finite simplicial complex, a \textit{discrete Morse function} $f:K\to\mathbb{R}$ satisfies the following two conditions, where $\alpha^{(k)}$ is a simplex of dimension $k$:
\begin{enumerate}
\item $|\{\beta^{(k+1)}>\alpha^{(k)}:f(\beta)\leq f(\alpha)\}|\leq 1$
\item $|\{\gamma^{(k-1)}<\alpha^{(k)}:f(\alpha)\leq f(\gamma)\}|\leq 1$
\end{enumerate}
To such a function we have an associated \textit{gradient vector field} $V_f$. This consists of the pairs $\{\alpha^{(k)}<\beta^{(k+1)}\}$ with $f(\alpha)\ge f(\beta)$. Unpaired simplices are called \textit{critical}. As mentioned in the Introduction, one may reinterpret this collection as a directed graph by taking the Hasse diagram of $K$ with edges pointing down in dimension and reversing those arrows corresponding to pairs in $V_f$. The resulting directed graph is acyclic, and conversely, given such an acyclic partial matching there is a discrete Morse function whose gradient is the matching. This imposes an equivalence relation on the set of discrete Morse functions on $K$, so one typically works with the gradient vector fields instead. These gradients are also called {\em Morse matchings} on $K$.

There are two main theorems of discrete Morse theory presented in \cite{Forman1998}. Denote by $K(a)$ the subcomplex of $K$ generated by all simplices $\tau$ with $f(\tau)\leq a$. The first is that for any interval $[a,b]$ containing no critical simplices, $K(a)$ is a deformation retract of $K(b)$, and in fact there is a simplicial collapse $K(b)\searrow K(a)$. The second states that for an interval $[a,b]$ containing a single critical simplex $\sigma^{(p)}$, $K(b)$ is homotopy equivalent to $$K(a)\bigcup e^{(p)}$$ where $e^{(p)}$ is attached to $K(a)$ along its boundary. As a corollary, any simplicial complex with a discrete Morse function is then homotopy equivalent to a CW-complex with exactly one cell of dimension $p$ for every critical cell of dimension $p$.

\section{Strong discrete Morse theory}\label{sec:strong}

Strong discrete Morse theory, developed in \cite{Fernandez2026}, adapts Forman's discrete Morse theory by using the combinatorial tools of \cite{BarmakMinian2009} to define \textit{internal strong collapses} on a simplicial complex $K$. The result of these collapses is a regular CW-complex called a \textit{strong core} of $K$. This strong core is not unique, but it will always preserve the homotopy type of $K$. Here we present the main results from \cite{Fernandez2026}.

Let $K$ be a simplicial complex with vertex $v$. The \textit{link} of $v$ in $K$, denoted $\text{lk}(v,K)$ or simply $\text{lk}(v)$ if the complex $K$ is clear, is the subcomplex of $K\setminus v$ consisting of simplices $\sigma$ such that $\sigma\cup\{v\}$ is a simplex of $K$. A \textit{simplicial cone} with apex $a\not\in V(K)$ is the complex $a*K$ consisting of all simplices of $K$, the vertex $a$, and all simplices of the form $\sigma\cup\{a\}$ for $\sigma\in K$. The \textit{star} of a simplex $\sigma$,  $\text{st}_K(\sigma)$, consists of all simplices of $K$ containing $\sigma$; its closure will be denoted $\text{St}_K(\sigma)$. Again, we omit the subscript $K$ if the context is clear. Note that stars (both open and closed) are contractible. We denote by $K\setminus v$ the subcomplex obtained from $K$ by removing $\text{st}_K(v)$.

\begin{definition}[\cite{BarmakMinian2009}]
Let $K$ be a simplicial complex. A vertex $v$ of $K$ is \textit{dominated} by a vertex $a$ if $\text{lk}(v,K)=a*K_0$ where $K_0$ is a subcomplex of $K$. In that case, there is an \textit{elementary strong collapse} from $K$ to $K\setminus v$, denoted $K~\sco K\setminus v$. A \textit{strong collapse} $K~\sco L$ from a simplicial complex $K$ to a subcomplex $L$ is a sequence of elementary strong collapses starting at $K$ and ending at $L$. In that case, we say $L$ is a \textit{strong core} of $K$. If L has no dominated vertices, we say that $L$ is a \textit{minimal strong core} of $K$. 
\end{definition}

Some simplicial complexes do not have dominated vertices. A simple example is the $n$-cycle $C_n$, the $n$-vertex triangulation of a circle. The link of any vertex in $C_n$ consists of two points and is therefore not a cone. Strong discrete Morse theory remedies this via vertex filtrations. A \textit{vertex filtration} is an injective map $g:V(K)\to\mathbb{R}$ on the set of vertices of $K$ that assigns a real value to each $v\in V(K)$. We can then filter $K$ by subcomplexes generated by vertices with function values bounded by some threshold. This idea is formalized in the following definition.

\begin{definition} (\cite{Fernandez2026}, Definition 3.5)
Let $g:V(K)\to\mathbb{R}$ be a vertex filtration. If $v$ is a vertex in $K$, let $K_{g(v)}$ denote the subcomplex of $K$ generated by all vertices with function value at most $g(v)$. The \textit{descending open star} of $v$, denoted $\textrm{st}^\downarrow(v,K)$ is the open star of $v$ in the subcomplex $K_{g(v)}$. Similarly, the \textit{descending link} of $v$, denoted $\textrm{lk}^\downarrow(v,K)$, is the link of $v$ in $K_{g(v)}$. We say that $v$ is \textit{descending dominated} if it is dominated in $K_{g(v)}$ by a vertex $a$ with $g(a)<g(v)$. Otherwise, $v$ is called a \textit{strong critical vertex}.
\end{definition}

 Given a vertex filtration $g:V(K)\to\zr$, we build an associated \textit{strong matching $\mathcal{M}$} as follows. Initialize ${\mathcal M}=\emptyset$. Order the vertices by increasing function value and proceed from the minimal vertex. Its descending link is empty and so the vertex is critical. Examine each vertex in turn; if a vertex $u$ is not descending dominated then it remains critical, as does the entire subcomplex $K_{g(u)}$. Suppose $v$ is the first vertex that is descending dominated in $K_{g(v)}$, say by vertex $a$. Then for $\sigma\in\text{St}^\downarrow(v)$, we add all pairs of the form $\{\sigma,\sigma\cup\{a\}\}$ to ${\mathcal M}$. In particular, $v$ gets paired with the edge $[v,a]$. Proceed through the remaining vertices, updating the matching each time a descending dominated vertex is reached.  Note that strong critical vertices remain unpaired as expected, as do their descending open stars. Moreover, this process is not unique. Indeed, it may happen that a vertex $v$ is descending dominated by more than one vertex in $K_{g(v)}$. In that case, we simply make a choice of the dominating vertex and update the matching. Thus, for a given filtration function $g$, there may be many associated matchings. Figure \ref{fig:triangles} shows an example of four different strong matchings arising from the same vertex filtration. Pseudocode for this process is shown in Algorithm \ref{alg:internal_core_computation}. 

\begin{theorem} (\cite{Fernandez2026}, Theorem 3.9)
    Given a vertex filtration $g:V(K)\to\zr$, an associated matching ${\mathcal M}$ is acyclic.
\end{theorem}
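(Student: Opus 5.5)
The plan is to partition $K$ into the descending open stars $\textrm{st}^\downarrow(v,K)$, $v\in V(K)$. Every simplex $\sigma$ lies in exactly one of them, namely the one for its vertex of largest $g$-value; call this vertex $\max\sigma$. The argument then has two parts. First, every pair in $\mathcal M$ lies inside a single block. Second, every edge of the modified Hasse diagram either stays within a block or goes from a block with larger maximum to one with smaller maximum. Acyclicity then reduces to acyclicity inside each block.

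\textbf{Pairs stay in one block.} Let $v$ be descending dominated by $a$ with $g(a)<g(v)$, so that $\textrm{lk}^\downarrow(v)=a*L_0$. For $\sigma\in\textrm{st}^\downarrow(v)$ with $a\notin\sigma$, the set $\sigma\setminus\{v\}$ lies in the link, so $(\sigma\setminus\{v\})\cup\{a\}$ lies in the link by the cone condition. Hence $\sigma\cup\{a\}\in K_{g(v)}$, and it still has maximum vertex $v$. So the pairs $\{\sigma,\sigma\cup\{a\}\}$ lie inside $\textrm{st}^\downarrow(v)$, and they match it perfectly: the simplices without $a$ are matched to those with $a$. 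In particular $\mathcal M$ is a well-defined matching, since distinct $v$ give disjoint blocks.

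\textbf{Hasse edges are monotone in the maximum.} Now consider a directed path in the modified Hasse diagram. A downward edge $\tau\to\rho$ with $\rho\subset\tau$ satisfies $g(\max\rho)\le g(\max\tau)$. An upward edge comes from a matched pair, and by the previous paragraph it keeps $\max$ fixed. So $g(\max\cdot)$ is non-increasing along any path. A directed cycle must therefore have constant $\max$, equal to some $v$, and lie entirely inside $\textrm{st}^\downarrow(v)$.

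\textbf{No cycle within a block.} Inside $\textrm{st}^\downarrow(v)$ with dominating vertex $a$, up-edges go $\sigma\to\sigma\cup\{a\}$ with $a\notin\sigma$. Every down-edge out of a matched upper simplex $\sigma\cup\{a\}$ other than its own pair edge must remove some vertex $w\neq a$. The result still contains $a$, so it is again an upper simplex, which is never the source of an up-edge. Thus after one up-step the path can only descend among simplices containing $a$ and can never go up again. A cycle alternates up and down steps, so no cycle exists. Dimension gives the same conclusion: a cycle would need at least two up-steps, and after the first no further up-step is possible.

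The step I expect to need the most care is the cone argument in the first part. One must check that $\sigma\cup\{a\}$ is genuinely a simplex of $K_{g(v)}$, and handle the boundary cases $\sigma=\{v\}$ (paired with $[v,a]$) and $\sigma\ni a$. One should also record that $K_{g(v)}$ here is the full subcomplex on the vertices of value at most $g(v)$, so that the blocks partition $K$.
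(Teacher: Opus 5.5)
Your proof is correct and complete. The paper itself gives no proof of this statement: it is imported from Fern\'andez. So there is no internal argument to compare yours with.

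Structurally, your argument is the patchwork (cluster) lemma. The map $\sigma\mapsto\max\sigma$ is order-preserving from the face poset of $K$ to $V(K)$ ordered by $g$. Every pair of $\mathcal M$ lies in a single fiber $\textrm{st}^\downarrow(v)$. Each fiber is either unmatched or carries the acyclic matching that toggles one fixed vertex $a_v$. The cone condition on $\textrm{lk}^\downarrow(v)$ enters only to keep $\sigma\cup\{a_v\}$ inside the fiber, and the choice of dominating vertex is irrelevant.

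An equally short alternative is to exhibit a discrete Morse function whose gradient is $\mathcal M$. Let $r(v)$ be the rank of $v$ in the $g$-order. Put $\epsilon(\sigma)=1$ if $\max\sigma$ is descending dominated with chosen vertex $a$ and $a\in\sigma$, and $\epsilon(\sigma)=0$ otherwise. Set $f(\sigma)=r(\max\sigma)+\dim\sigma-\epsilon(\sigma)$. Then $f(\rho)<f(\tau)$ for every codimension-one face $\rho\subset\tau$ except the matched pairs, where $f(\rho)=f(\tau)$. Acyclicity then follows from the standard fact that a discrete gradient has no closed paths. This route gives a Morse function compatible with the filtration, at the cost of a case check and an appeal to Forman. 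Your path argument is entirely direct.

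Two small points to tidy up:
\begin{itemize}
\item Say explicitly that fibers of strong critical vertices contain no up-edges, so they carry no cycles.
\item Your sentence ``the result still contains $a$, so it is again an upper simplex'' needs $w\neq v$. This holds only because you have already confined the cycle to $\textrm{st}^\downarrow(v)$.
\end{itemize}

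You are right to read the paper's ``$\sigma\in\textrm{St}^\downarrow(v)$'' as the open descending star with $a\notin\sigma$, as the pseudocode does. With the closed star, pairs would fall into lower fibers and could collide with pairs already made there.
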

 
\begin{figure}
\centering
\begin{tikzpicture}[
    >={Latex[length=2.5mm]},
    line/.style={thick, black},
    vtx/.style={circle,fill=black, inner sep=1.6pt},
    hexedge/.style={thick, black},
    arrowedge/.style={thick, ->, color=hexblue},scale=0.9
]

\tikzstyle{black state}=[
    circle,
    draw = black,
    thick,
    fill = black,
    minimum size = 1mm
]
\tikzstyle{red state}=[
    circle,
    draw = red,
    thick,
    fill = red,
    minimum size = 1mm
]
 
\newcommand{\baseTriangle}{
    \coordinate (0)  at (1.5,2.5);
    \coordinate (P1) at (0,0);
    \coordinate (P2) at (1.5,0);
    \coordinate (P3) at (3,0);
    \coordinate (12) at (0.75,0);
    \coordinate (23) at (2.25,0);
    \coordinate (01) at (0.75,1.25);
    \coordinate (02) at (1.5,1.25);
    \coordinate (03) at (2.25,1.25);

    \fill[black!15] (P1) -- (0) -- (P2) -- cycle;
    \fill[black!15] (P2) -- (0) -- (P3) -- cycle;

    \draw[line] (P1) -- (0);
    \draw[line] (P2) -- (0);
    \draw[line] (P3) -- (0);
    \draw[line] (P1) -- (P2);
    \draw[line] (P2) -- (P3);

    \node[vtx, red state]   at (0)  {};
    \node[vtx, black state] at (P1) {};
    \node[vtx, black state] at (P2) {};
    \node[vtx, black state] at (P3) {};

    \node at ($(0)+(0,0.4)$)    {0};
    \node at ($(P1)+(-0.35,0)$) {1};
    \node at ($(P2)+(0,-0.4)$)  {2};
    \node at ($(P3)+(0.35,0)$)  {3};
}
 
\begin{scope}[shift={(0,0)}]
\baseTriangle 
\draw[arrowedge] ($(P1)!0!(0)$) to ($(P1)!0.50!(0)$);
\draw[arrowedge] ($(P2)!0!(0)$) to ($(P2)!0.50!(0)$);
\draw[arrowedge] ($(P3)!0!(0)$) to ($(P3)!0.50!(0)$);
\draw[arrowedge] ($(12)!0!(0)$) to ($(12)!0.30!(0)$);
\draw[arrowedge] ($(23)!0!(0)$) to ($(23)!0.30!(0)$);
\end{scope}

\begin{scope}[shift={(5,0)}]
\baseTriangle;
\draw[arrowedge] ($(P1)!0!(0)$) to ($(P1)!0.50!(0)$);
\draw[arrowedge] ($(P2)!0!(0)$) to ($(P2)!0.50!(0)$);
\draw[arrowedge] ($(P3)!0!(P2)$) to ($(P3)!0.50!(P2)$);
\draw[arrowedge] ($(12)!0!(0)$) to ($(12)!0.30!(0)$);
\draw[arrowedge] ($(03)!0!(P2)$) to ($(03)!0.50!(P2)$);
\end{scope}
 
\begin{scope}[shift={(0,-5)}]
\baseTriangle;
\draw[arrowedge] ($(P1)!0!(0)$) to ($(P1)!0.50!(0)$);
\draw[arrowedge] ($(P2)!0!(P1)$) to ($(P2)!0.50!(P1)$);
\draw[arrowedge] ($(P3)!0!(0)$) to ($(P3)!0.50!(0)$);
\draw[arrowedge] ($(02)!0!(P1)$) to ($(02)!0.40!(P1)$);
\draw[arrowedge] ($(23)!0!(0)$) to ($(23)!0.30!(0)$);
\end{scope}
 
\begin{scope}[shift={(5,-5)}]
\baseTriangle;
\draw[arrowedge] ($(P1)!0!(0)$) to ($(P1)!0.50!(0)$);
\draw[arrowedge] ($(P2)!0!(P1)$) to ($(P2)!0.50!(P1)$);
\draw[arrowedge] ($(P3)!0!(P2)$) to ($(P3)!0.50!(P2)$);
\draw[arrowedge] ($(02)!0!(P1)$) to ($(02)!0.40!(P1)$);
\draw[arrowedge] ($(03)!0!(P2)$) to ($(03)!0.50!(P2)$);
\end{scope}
 
\end{tikzpicture}
\caption{The strong gradient from a vertex ordering is not unique. In this complex, vertex $2$ is descending dominated by both $0$ and $1$. Thus, it can be paired with the edge $12$ or $02$. That choice determines which edge pairs with the triangle $012$. A similar situation happens with vertex $3$. All of these strong gradients still provide a matching that captures the correct homotopy type of the simplicial complex.}
\label{fig:triangles}
\end{figure}

\begin{figure}
\centering
\begin{tikzpicture}[
    >={Latex[length=2.5mm]},
    vtx/.style={circle, fill=black, inner sep=1.6pt},
    hexedge/.style={thick, black},
    arrowedge/.style={thick, ->, color=hexblue},scale=0.9
]
 
\coordinate (T1)  at (0,1.6);
\coordinate (TR1) at (1.3856,0.8);
\coordinate (R1)  at (1.3856,-0.8);
\coordinate (B1)  at (0,-1.6);
\coordinate (BL1) at (-1.3856,-0.8);
\coordinate (TL1) at (-1.3856,0.8);
 
\coordinate (T2)  at (5,1.6);
\coordinate (TR2) at (6.3856,0.8);
\coordinate (R2)  at (6.3856,-0.8);
\coordinate (B2)  at (5,-1.6);
\coordinate (BL2) at (3.6144,-0.8);
\coordinate (TL2) at (3.6144,0.8);
 
\coordinate (T3)  at (10,1.6);
\coordinate (TR3) at (11.3856,0.8);
\coordinate (R3)  at (11.3856,-0.8);
\coordinate (B3)  at (10,-1.6);
\coordinate (BL3) at (8.6144,-0.8);
\coordinate (TL3) at (8.6144,0.8);
 
\draw[hexedge] (T1)--(TR1)--(R1)--(B1)--(BL1)--(TL1)--cycle;
\draw[hexedge] (T2)--(TR2)--(R2)--(B2)--(BL2)--(TL2)--cycle;
\draw[hexedge] (T3)--(TR3)--(R3)--(B3)--(BL3)--(TL3)--cycle;
 
\foreach \c in {T1,TR1,R1,B1,BL1,TL1,T2,TR2,R2,B2,BL2,TL2,T3,TR3,R3,B3,BL3,TL3}
    \node[vtx] at (\c) {};
 
\node[above]       at (T1)  {$3$};
\node[above right] at (TR1) {$4$};
\node[right]        at (R1)  {$5$};
\node[below]       at (B1)  {$0$};
\node[below left]  at (BL1) {$1$};
\node[left]         at (TL1) {$2$};
 
\node[above]       at (T2)  {$3$};
\node[above right] at (TR2) {$4$};
\node[right]        at (R2)  {$5$};
\node[below]       at (B2)  {$1$};
\node[below left]  at (BL2) {$0$};
\node[left]         at (TL2) {$2$};
 
\node[above]       at (T3)  {$3$};
\node[above right] at (TR3) {$4$};
\node[right]        at (R3)  {$5$};
\node[below]       at (B3)  {$2$};
\node[below left]  at (BL3) {$1$};
\node[left]         at (TL3) {$0$};
 

\draw[arrowedge] ($(TR1)!0.05!(T1)$) to ($(TR1)!0.50!(T1)$);
\draw[arrowedge] ($(T1)!0.05!(TL1)$)  to ($(T1)!0.50!(TL1)$);
\draw[arrowedge] ($(TL1)!0.05!(BL1)$) to ($(TL1)!0.50!(BL1)$);
\draw[arrowedge] ($(BL1)!0.05!(B1)$)  to ($(BL1)!0.50!(B1)$);
 
\draw[arrowedge] ($(TR2)!0.05!(T2)$)  to ($(TR2)!0.50!(T2)$);
\draw[arrowedge] ($(T2)!0.05!(TL2)$)  to ($(T2)!0.50!(TL2)$);
\draw[arrowedge] ($(TL2)!0.05!(BL2)$) to ($(TL2)!0.50!(BL2)$);
\draw[arrowedge] ($(BL2)!0.95!(B2)$) to ($(BL2)!0.50!(B2)$);
 
\draw[arrowedge] ($(TR3)!0.05!(T3)$)  to ($(TR3)!0.50!(T3)$);
\draw[arrowedge] ($(T3)!0.05!(TL3)$)  to ($(T3)!0.50!(TL3)$);
\draw[arrowedge] ($(TL3)!0.95!(BL3)$) to ($(TL3)!0.50!(BL3)$);
\draw[arrowedge] ($(BL3)!0.95!(B3)$) to ($(BL3)!0.50!(B3)$);
 
\end{tikzpicture}
\caption{$C_6$ with three different vertex filtrations and the resulting strong Morse matchings. Up to the symmetries of $C_6$, these are all of the maximal strong Morse matchings on $C_6$.}
\label{fig:C_6}
\end{figure}

\begin{example}
Figure \ref{fig:C_6} gives a simple example of three different vertex filtrations on $K=C_6$. We build the strong Morse matching related to the vertex filtration by looking at each subcomplex $K_{g(v)}$ of $K$. For the leftmost hexagon, the first subcomplex is $K_0$, where we have a single strong critical vertex $0$. The complex $K_1$ consists of vertices $1,0$, and the edge $01$ and then lk$^\downarrow(1,K)=\{0\}\simeq *$, a simplicial cone. Thus $1$ is descending dominated by $0$, and we pair $1$ with the edge $01$. We update the matching as we examine the subcomplexes $K_2,K_3,$ and $K_4$. The final subcomplex is always the entire complex; here, it is $K_5=K$. Note that the lower link of vertex $5$ is the disjoint union of vertices $0$ and $4$; this is simply $\zs^0$ and this is not a simplicial cone. So vertex $5$ is not downward dominated and remains strong critical. The other two copies of $C_6$ have different vertex filtrations yielding distinct strong Morse matchings.
\end{example}

It is clear from Figures \ref{fig:triangles} and \ref{fig:C_6} that we can have many different strong Morse matchings on a given simplicial complex. For any of them, the sequence of internal collapses yields a regular CW-complex.

\begin{theorem}[\cite{Fernandez2026}, Theorem 3.7, Strong Discrete Morse Theory]
Let $K$ be a finite simplicial complex, and let $g:V(K)\to\mathbb{R}$ be a real-valued function. Then $K$ is homotopy equivalent to a regular CW-complex core$_g(K)$, whose cells are in one-to-one correspondence with the simplices in the descending open stars of the strong critical vertices of $g$.
\end{theorem}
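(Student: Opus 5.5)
We prove the statement by induction on the vertices in increasing order of $g$. We write $v_1,\dots,v_m$ for the vertices with $g(v_1)<\dots<g(v_m)$ and $K_i=K_{g(v_i)}$. For each $i$ we will build a regular CW-complex $X_i\simeq K_i$. Its cells will correspond to the simplices in the descending open stars of the strong critical vertices among $v_1,\dots,v_i$. The key fact is that $K_i$ is the disjoint union of $K_{i-1}$ and $\textrm{st}^\downarrow(v_i,K)$, since $K_i$ is the subcomplex generated by the first $i$ vertices. So at each stage we either add or discard exactly one descending open star.

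\textbf{Dominated step.} Suppose $v_i$ is descending dominated by $a$ with $g(a)<g(v_i)$. Then $v_i$ is dominated in $K_i$, so by Barmak--Minian there is an elementary strong collapse $K_i\sco K_i\setminus v_i=K_{i-1}$. This is a strong deformation retraction: the linear homotopy of the vertex map $v_i\mapsto a$ (identity elsewhere) is a simplicial map homotopic to the identity, because $\textrm{lk}(v_i)=a*K_0$. Hence $K_{i-1}\hookrightarrow K_i$ is a homotopy equivalence, and we set $X_i=X_{i-1}$. These are exactly the pairs $\{\sigma,\sigma\cup\{a\}\}$ recorded in the matching, and none of them survives in the core.

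\textbf{Critical step.} Suppose $v_i$ is strong critical. We need to attach the simplices of $\textrm{st}^\downarrow(v_i)$ to $X_{i-1}$ as cells of the same dimension. Each such simplex $\sigma$ has its boundary in $K_{i-1}\cup\partial$-faces containing $v_i$, and these faces are attached in order of increasing dimension. The attaching maps are first defined into $K_{i-1}$ and then transported to $X_{i-1}$ by composing with the homotopy equivalence $r_{i-1}:K_{i-1}\to X_{i-1}$. By the gluing lemma (homotopy invariance of adjunction spaces along cofibrations), the resulting complex $X_i$ is homotopy equivalent to $K_i$.

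\textbf{Main obstacle: regularity.} The hard part is arranging that $X_i$ is a \emph{regular} CW-complex, meaning the attaching maps are embeddings. A mere homotopy-equivalence transport would destroy this. To keep it, we will take $r_{i-1}$ to be the composite of the strong collapse retractions. Each such retraction is a simplicial map that sends simplices to simplices, namely $\sigma\mapsto\sigma$ or $\sigma\cup\{a\}\mapsto\sigma'$.

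We then argue inductively that the image of the closed star of a critical simplex is a subcomplex whose cell structure is the quotient obtained by identifying collapsed pairs. One checks that the boundary of each critical cell maps homeomorphically onto a union of cells. This uses the fact that, in the cone structure $a*K_0$, the retraction restricted to $\textrm{lk}^\downarrow$ is injective on the part disjoint from $a$.

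If this fails in general, the fallback is to realize $X_i$ directly as a quotient of $K_i$ by collapsing the contractible fibers of the retraction, and to verify regularity cell by cell. This step is where essentially all the work lies.
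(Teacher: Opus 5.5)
The paper gives no proof of this theorem; it quotes it from Fern\'andez, so your argument has to stand on its own.

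\textbf{What holds.} The homotopy-theoretic half is fine. The map $v_i\mapsto a$ is contiguous to the identity, so it gives a strong deformation retraction $K_i\to K_{i-1}$. Transporting attaching maps along $r_{i-1}$ then produces a CW complex $X_m\simeq K$ with one cell for each simplex in the descending open stars of the strong critical vertices. However, this much is just Forman's theorem applied to the strong matching, which is acyclic (Theorem 3.9). The real content of the statement is the word \emph{regular}. You leave that open explicitly, and the mechanism you sketch for it does not work.

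\textbf{Why the regularity argument fails.} The fact you invoke is vacuous. The retraction $v\mapsto a$ fixes every vertex other than $v$, so it is the identity on $\textrm{lk}^\downarrow(v)$. What it actually does is crush each simplex $[v,a]*\rho$ of the closed descending star onto $a*\rho$. Such simplices can be faces of critical simplices belonging to later critical vertices.

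\textbf{Counterexample.} Take vertices $a<v<x<w$ and let $K$ be the triangle $[a,v,w]$ together with the edges $[a,x]$ and $[x,w]$.
\begin{itemize}
\item The vertices $v$ and $x$ are each descending dominated by $a$.
\item $\textrm{lk}^\downarrow(w)=[a,v]\sqcup\{x\}$ is not a cone, so $w$ is strong critical and $[a,v,w]$ is a critical $2$-cell.
\item Your $r_3$ is the constant map to $a$. The attaching map you build for $[a,v,w]$ therefore sends the whole edge $[a,v]$ to the point $a$.
\end{itemize}
So the boundary of a critical cell does not map homeomorphically onto a union of cells, and the complex $X_4$ you construct is not regular.

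\textbf{What is needed to close the gap.} In this example the image is the circle formed by the cells $[a,w]$ and $[v,w]$, both now joining $a$ to $w$. The map onto it has degree one, so you could swap in a homeomorphism without changing the homotopy type. Showing this is always possible is precisely the missing content. You would need, for every critical $\tau$:
\begin{itemize}
\item that $r(\partial\tau)$ is a subcomplex homeomorphic to $S^{\dim\tau-1}$, with no extra identifications between faces of $\tau$ induced by collapses outside $\tau$; and
\item that $r|_{\partial\tau}$ is homotopic to a homeomorphism onto it. One route is to show it factors through successive quotients $[v,a]*\rho\to a*\rho$ that preserve the homeomorphism type of balls and spheres.
\end{itemize}
Alternatively, you could describe the face poset of the core directly and verify that it is a CW poset, i.e.\ that its lower intervals have spherical order complexes. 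Your fallback of quotienting $K$ by the fibres of the retractions runs into exactly the same verification, and you do not carry it out.
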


All three strong Morse matchings in Figure \ref{fig:C_6} collapse to a regular CW-core with two vertices and two edges between them, clearly preserving homotopy type. All matchings in Figure \ref{fig:triangles} collapse the complex to a single point as expected.

\begin{algorithm}[htb!]
\caption{A random strong Morse matching \cite{Fernandez2026}}
\label{alg:internal_core_computation}
\begin{algorithmic}
\State \verb"Input:" $X$, the face poset of a simplicial complex $K$
\State \verb"Output:" A strong Morse matching ${\mathcal M}$ on $K$
\\
    \State $S \gets \text{Shuffle}(\text{MinimalElements}(X))$ \Comment{{\footnotesize Randomized linear extension of vertices of $K$}}
    \State \verb"Initialize" ${\mathcal M}=\emptyset$
    \For{$v \in S$}
        \If{there exists $a \in \text{MinimalElements}(X) \setminus \{v\}$ such that $a$ dominates $v$}
            \ForAll{$\sigma \in \{\sigma \in X\colon v \leq \sigma\}$ such that $a \nleq \sigma$}
                \State ${\mathcal M} \gets {\mathcal M} \cup \{ (\sigma, \{a\}\cup\sigma) \}$
            \EndFor
            \State $X \gets X \setminus \{ \tau \in X \colon v \leq \tau \}$
        \EndIf
    \EndFor
    \State \Return ${\mathcal M}$
\end{algorithmic}
\end{algorithm}

\section{The complex $\sm(K)$}\label{sec:smk}

The complex of discrete Morse functions was first defined and studied by Chari and Joswig in \cite{ChariJoswig2005}. We review their construction, then define an analogous complex generated by strong discrete Morse matchings.

\begin{definition}[\cite{ChariJoswig2005}]
Given a finite abstract simplicial complex $K$, the \textit{complex of discrete Morse matchings} $\mathfrak{M}(K)$ is the simplicial complex with vertex set consisting of the set of edges of the Hasse diagram of $K$ and whose simplices are all subsets of edges of the Hasse diagram of $K$ that form Morse matchings. The subcomplex generated by all facets in ${\mathfrak M}(K)$ is called the {\em pure complex of Morse matchings} and is denoted by ${\mathfrak M}_{\textrm{pure}}(K)$.
\end{definition}

The complex ${\mathfrak M}(K)$ is not pure in general. An easy example is given by the cycle $C_n$ for $n\ge 6$. The facets in ${\mathfrak M}(C_n)$ have dimension $n-2$; these are the matchings containing $n-1$ vertex-edge pairs that go around the cycle leaving only one pair unmatched. For $n\ge 6$, it is possible to leave two edges unmatched in such a way that the matching cannot be extended to one of size $n-1$, creating a maximal simplex that is not a facet. 

Note that if ${\mathcal M}$ is a Morse matching on $K$, any subset of ${\mathcal M}$ is also a Morse matching so that $\mathfrak{M}(K)$ is in fact a simplicial complex. This same fact {\em does not} hold for strong discrete Morse matchings, so we must define our complex slightly differently. An example of this can be seen in Figure \ref{fig:C_6}. Viewing the leftmost image as a Morse matching, the subset consisting of the single pair $(1,01)$ is still a valid Morse matching on $C_6$ and gives a vertex in ${\mathfrak M}(C_6)$. However, the single pair $(1,01)$ is not itself a strong Morse matching since there is no vertex filtration that yields it (every strong matching on $C_6$ consists of either four, two, or zero pairs). With that in mind, we define $\sm(K)$ as the downward closure of all strong discrete Morse matchings to ensure that it is a simplicial complex.

\begin{definition}\label{smdef}
Let K be a simplicial complex. The \textit{complex of strong discrete Morse matchings} on K, denoted $\mathcal{SM}(K)$, is the subcomplex of ${\mathfrak M}(K)$ generated by all strong  Morse matchings on $K$. The subcomplex generated by all facets of maximal dimension in $\sm(K)$ is called the {\em pure complex of strong Morse matchings} and is denoted by $\smpure(K)$. 
\end{definition}

These strong matchings are generated by vertex filtrations as described in Section \ref{sec:strong}. Since only the order of vertices matters when generating the strong matching, we typically label the vertices of K from $0$ to $n-1$, where $n=|V(K)|$. Every vertex filtration on $K$ produces a strong matching with $k$ pairs for some $0\leq k\leq \dim\sm(K)+1$, represented by a $(k-1)$-simplex in $\sm(K)$. The vertices of this simplex are given by the various simplex pairs in $K$, which may or may not be strong matchings by themselves. For example, the strong Morse matchings shown in Figure \ref{fig:C_6} are each  represented in $\sm(C_6)$ by a $3-$simplex. If $k=0$, we have the \textit{empty strong Morse matching} on $K$. Following the convention of \cite{ChariJoswig2005}, the empty matching is not included in $\mathcal{SM}(K)$.

\section{Some auxiliary results}\label{sec:aux} When computing the homotopy types of various $\sm(K)$ we will need to make use of the following results. The first is a special case of results in \cite{WelkerZiegler1999}, but we will need only this version.

\begin{proposition}[\cite{ChariJoswig2005}, Proposition 3.4]\label{wedgeprop} 
    Let $\Delta$ be a simplicial complex and suppose that $A,B$ are subcomplexes such that the inclusions $A\cap B\hookrightarrow A$ and $A\cap B\hookrightarrow B$ are null-homotopic. Then $A\cup B\simeq A\vee B\vee \Sigma(A\cap B)$.
\end{proposition}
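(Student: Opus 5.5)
Since $A \cap B \hookrightarrow A$ and $A \cap B \hookrightarrow B$ are null-homotopic, the plan is to replace $A$ and $B$ by spaces in which $A\cap B$ visibly bounds a cone, and then read off the wedge decomposition. Throughout, $A\cup B$ is the pushout of $A \hookleftarrow A\cap B \hookrightarrow B$. Because all three are subcomplexes of $\Delta$, both inclusions are cofibrations. Hence this pushout is also a homotopy pushout, so its homotopy type depends only on the homotopy classes of the two maps out of $A\cap B$.

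\textbf{Step 1: replace the maps by constant maps.} Let $i\colon A\cap B\to A$ and $j\colon A\cap B\to B$ be the inclusions. Choose homotopies from $i$ to a constant map $c_a$ and from $j$ to a constant map $c_b$. By the homotopy invariance of homotopy pushouts (the gluing lemma), $A\cup B$ is homotopy equivalent to the homotopy pushout of
\[
A \xleftarrow{\,c_a\,} A\cap B \xrightarrow{\,c_b\,} B,
\]
that is, to the double mapping cylinder
\[
M=A \sqcup \bigl((A\cap B)\times[0,1]\bigr) \sqcup B/\sim ,
\]
where $(x,0)\sim a$ and $(x,1)\sim b$ for all $x \in A\cap B$.

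\textbf{Step 2: identify $M$.} In $M$, the whole cylinder $(A\cap B)\times[0,1]$ has its top collapsed to the point $a\in A$ and its bottom collapsed to $b\in B$. So $M$ is the unreduced suspension $S(A\cap B)$, with its two cone points glued to $a$ and to $b$ respectively.

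\textbf{Step 3: pass to the wedge.} Choose a path $\gamma$ in $S(A\cap B)$ from one suspension point to the other, for instance a meridian through a fixed point $x_0\in A\cap B$. The subspace $A\cup\gamma\cup B$ is $A\vee B$ with an arc inserted between the basepoints. Contracting this arc, which is a cofibration since everything is CW, gives a homotopy equivalence. It turns the unreduced suspension into the reduced suspension $\Sigma(A\cap B)$, now attached at the common basepoint. Therefore $M\simeq A\vee B\vee\Sigma(A\cap B)$.

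\textbf{Expected obstacle and edge cases.} The main point needing care is connectivity. If $A$ or $B$ is disconnected, the choice of basepoints $a$ and $b$ matters, and each null-homotopy must land in a specific component. If $A\cap B$ is disconnected, the meridian in Step 3 still exists, because $S(A\cap B)$ is connected whenever $A\cap B\neq\emptyset$. The statement is implicitly about the path-connected setting, and that is the case the paper uses. If $A\cap B=\emptyset$, the convention $\Sigma\emptyset=\zs^0$ makes the formula hold. I would verify the gluing-lemma step by citing the standard cofibration argument, as in Welker--Ziegler.
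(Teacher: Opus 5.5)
The paper gives no proof of this proposition. It quotes it from Chari--Joswig and remarks that it is a special case of results of Welker--Ziegler on homotopy colimits (the wedge lemma for diagrams whose maps are constant).

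Your argument is a correct, self-contained proof of the two-piece case:
\begin{itemize}
\item subcomplex inclusions are cofibrations, so $A\cup B$ is a homotopy pushout;
\item homotopy invariance of the double mapping cylinder lets you replace the inclusions by constant maps;
\item the resulting space is $S(A\cap B)$ with its cone points attached at $a\in A$ and $b\in B$;
\item collapsing the contractible arc $\{x_0\}\times[0,1]$ gives $A\vee B\vee\Sigma(A\cap B)$.
\end{itemize}

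Compared with the cited route, yours is elementary and makes the hypotheses transparent. $A\cap B$ need not be connected, and the wedge must be formed in the path components of $A$ and $B$ that receive $A\cap B$. The homotopy-colimit route buys generality: it handles covers with more than two pieces at once, when the maps can be taken constant and compatible. Your two-piece statement has to be iterated instead, and each iteration needs the null-homotopy hypothesis for inclusions of unions such as $(A_0\cup\cdots\cup A_{k-1})\cap A_k\hookrightarrow A_0\cup\cdots\cup A_{k-1}$. That is exactly what the iterated use in Theorem~\ref{strongboundary} requires.

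One correction to your edge-case remark. If $A\cap B=\emptyset$, the hypotheses hold vacuously and $A\cup B=A\sqcup B$. With the usual meaning of $\vee$, however, $A\vee B\vee\zs^0=(A\vee B)\sqcup\{\ast\}$, and these already differ for $A=B=\zs^1$. Your own Step~3 shows where it breaks: the two cone points of $S(\emptyset)$ are not joined by any arc. So the convention $\Sigma\emptyset=\zs^0$ does not rescue the formula. The proposition should be read with $A\cap B\neq\emptyset$, which is all the paper needs.
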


We will also make use of {\em generalized discrete Morse theory} as described in \cite{BauerEdelsbrunner2017}. An {\em interval} in a simplicial complex $K$ is a subset of simplices of the form $$[P,R] = \{Q\mid P\subseteq Q\subseteq R\}.$$  The interval is nonempty if and only if $P$ is a face of $R$. A partition $W$ of $K$ into intervals is called a {\em generalized discrete vector field}. Now suppose there is a function $f:K\to\zr$ satisfying $f(P)\le f(Q)$ whenever $P$ is a face of $Q$ with equality holding if and only if both $P$ and $Q$ belong to a common interval in $W$. Then $f$ is called a {\em generalized discrete Morse function} and $W$ its {\em generalized discrete gradient}. If an interval contains only one simplex (i.e., $P=R$), then we call the interval {\em singular} and the simplex a {\em critical simplex}. 

Note that for every generalized discrete gradient, there is a discrete gradient vector field that refines every non-singular interval $[P,R]$ into pairs. Indeed, choose an arbitrary vertex $x\in R\setminus P$ and partition $[P,R]$ into pairs $\{Q\setminus \{x\},Q\cup\{x\}\}$ for all $Q\in [P,R]$. This is called a {\em vertex refinement}. This refinement and the generalized gradient have the same critical simplices. 

\begin{theorem}[\cite{BauerEdelsbrunner2017}, Theorem 2.2]\label{gencollapse}
    Let $K$ be a simplicial complex with a generalized discrete gradient $V$, and let $K'\subset K$ be a subcomplex. If $K\setminus K'$ is a union of non-singular intervals in $V$, then $K\searrow K'$.
\end{theorem}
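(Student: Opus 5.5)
The plan is to reduce the statement to a sequence of ordinary elementary collapses. I would refine each non-singular interval of $V$ lying in $K\setminus K'$ into pairs using the vertex refinement described just above, and then remove the intervals one at a time, in an order dictated by a generalized discrete Morse function $f$ whose gradient is $V$.

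First, I would fix such an $f$ and list the non-singular intervals $I_1,\dots,I_m$ whose union is $K\setminus K'$, ordered so that their common $f$-values are non-increasing. Ties between distinct intervals cause no trouble. If $P\subsetneq Q$ lie in different intervals, then $f(P)<f(Q)$ strictly, so two intervals with the same $f$-value contain no pair of simplices in a face relation. I would also note that a coface of a simplex in $K\setminus K'$ again lies in $K\setminus K'$, because $K'$ is closed under taking faces.

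The key claim is the following. Let $L$ be the complex remaining after $I_1,\dots,I_{j-1}$ have been removed, and write $I_j=[P,R]$. Then every coface in $L$ of a simplex of $I_j$ lies in $I_j$. To see this, take $Q\in I_j$ and a proper coface $Q'\supsetneq Q$ in $L$. Then $f(Q')\ge f(Q)$. If equality holds, $Q'$ is in the same interval as $Q$. If the inequality is strict, $Q'\in K\setminus K'$, so $Q'$ lies in some interval with strictly larger $f$-value. That interval comes earlier in the order and has already been removed, a contradiction. Hence $I_j$ is an "upward closed" block of $L$, and $L\setminus I_j$ is a subcomplex. In particular, at the end of the process the remaining complex is exactly $K'$.

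It remains to show that $L\searrow L\setminus I_j$ by elementary collapses. I would choose $x\in R\setminus P$ and pair each $Q\in[P,R]$ with $x\notin Q$ with $Q\cup\{x\}$. These pairs are processed in order of decreasing $\dim Q$.

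Consider the moment we reach the pair $(Q,Q\cup\{x\})$. By the previous step, the cofaces of $Q$ in the current complex are all of the form $Q\cup S$ with $S\subseteq R\setminus Q$. Each such coface with $|S|\ge 2$ contains either a simplex $Q\cup\{y\}$ with $y\ne x$ (when $x\in S$), or a pair $(Q\cup\{y\},Q\cup\{x,y\})$ (when $x\notin S$); in either case it has already been removed along with a pair of larger lower dimension. Likewise $Q\cup\{y\}$ for $y\neq x$ has already been removed, either as the lower member of its pair, of larger dimension than $Q$, or, if it contains $x$, as the upper member of the pair $(Q\cup\{y\}\setminus\{x\},\,Q\cup\{y\})$. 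Since $Q\cup\{y\}\setminus\{x\}$ does not contain $x$, whereas $Q\cup\{y\}$ does, this double-counting must be handled carefully.

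Therefore $Q$ is a free face of $Q\cup\{x\}$, and removing the pair is an elementary collapse. This bookkeeping, showing that $Q$ is free at the moment its pair is reached, is the main obstacle. I would organize it by induction on $|R\setminus(Q\cup\{x\})|$, which amounts to collapsing the cube-shaped Boolean interval $[P,R\setminus\{x\}]\times\{\,\cdot\,,x\}$ from the top down.
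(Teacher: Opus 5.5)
The paper does not prove this statement; it quotes it from Bauer--Edelsbrunner. Your argument therefore has to stand on its own, and it does. Two steps carry the proof:
\begin{itemize}
\item You order the intervals of $K\setminus K'$ by non-increasing $f$-value.
\item You use the clause ``$f(P)=f(Q)$ for $P\subsetneq Q$ only within a common interval'' to show that each $I_j=[P,R]$ is upward closed in the current complex $L$.
\end{itemize}
This is the right mechanism. It also spares you any assumption that $K'$ is a sublevel set of $f$. A route that reduces instead to a Forman-type ``no critical values $\Rightarrow$ collapse'' theorem would need that assumption, or a modification of $f$ on $K'$, and would end up doing the same per-interval bookkeeping anyway.

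The one thing to repair is your last paragraph, where the bookkeeping goes wrong and you worry about a difficulty that does not exist. Because $x\notin Q$ and $y\neq x$, the simplex $Q\cup\{y\}$ never contains $x$. So the ``upper member'' alternative and the ``double-counting'' concern are vacuous. Similarly, when $x\notin S$ the coface $Q\cup S$ does not contain $Q\cup\{x,y\}$.

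The clean statement is this. Let $T\in[P,R]$ with $Q\subsetneq T$ and $T\neq Q\cup\{x\}$. The pair containing $T$ has lower member $T\setminus\{x\}$, which equals $T$ when $x\notin T$; this lies in $[P,R]$ because $x\notin P$. It contains $Q$ because $x\notin Q$. The containment is strict, since $T\setminus\{x\}=Q$ would force $T=Q$ or $T=Q\cup\{x\}$. Hence that pair has lower dimension $>\dim Q$ and was removed before $(Q,Q\cup\{x\})$ is reached.

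Combined with your upward-closedness claim, $Q\cup\{x\}$ is the unique proper coface of $Q$ at that moment, so removing the pair is an elementary collapse. No induction on $|R\setminus(Q\cup\{x\})|$ is needed. Pairs whose lower members have the same dimension are not faces of one another's members, so their relative order does not matter.
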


\section{The path $P_n$}\label{sec:path}
Let $P_n$ be the path with $n$ edges and $n+1$ vertices labeled $v_0,v_1,\dots,v_n$. The complex $\sm(P_n)$ has $n(n+1)$ vertices corresponding to the vertex-edge pairs $01,10,12,21,\dots,(n-1,n),(n,n-1)$; number these as $0,1,2,\dots,2n-1$ (this is the lexicographical ordering). Since the vertex ordering $0,1,\dots,n$ yields a strong Morse matching, the complexes $\sm(P_n)$ have dimension $n-1$. Moreover, choosing a vertex to label with $0$ determines a maximal matching, so there are exactly $n+1$ $(n-1)$-simplices in $\sm(P_n)$. The complex $\sm(P_n)$ is pure for $n\le 3$. 

\begin{proposition}\label{prop:pnpure}
    The complex $\smpure(P_n)$ is contractible for all $n\ge 2$.
\end{proposition}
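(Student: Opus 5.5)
The plan is to identify the facets of $\smpure(P_n)$ explicitly and then apply the nerve theorem to the cover of $\smpure(P_n)$ by these facets.

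\textbf{Step 1: identify the facets.} For a vertex filtration $g$ on $P_n$ with minimum at $v_r$, consider a later vertex $v_i$. Its descending link is a subset of its (at most two) neighbours. It is a cone (a single point) exactly when precisely one neighbour precedes it, and then $v_i$ is dominated by that neighbour. A filtration produces $n$ pairs exactly when every vertex other than $v_r$ is descending dominated. Inductively, this means that each $K_{g(v)}$ is a subpath containing $v_r$. In that case every $v_i$ with $i<r$ is paired with the edge $v_iv_{i+1}$, and every $v_i$ with $i>r$ is paired with $v_iv_{i-1}$. In the paper's edge-vertex notation,
$$F_r=\{(i,i+1): i<r\}\cup\{(i,i-1): i>r\},\qquad r=0,\dots,n.$$
No other pairs occur, since the only simplices of positive dimension are edges. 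This agrees with the count of $n+1$ top-dimensional simplices noted before the proposition. Hence $\smpure(P_n)=\bigcup_{r=0}^n \overline{F_r}$, where $\overline{F_r}$ is the full $(n-1)$-simplex on $F_r$.

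\textbf{Step 2: compute intersections.} Take indices $r_1<\dots<r_k$. A pair $(i,i+1)$ lies in every $F_{r_j}$ iff $i<r_1$, and a pair $(i,i-1)$ lies in every $F_{r_j}$ iff $i>r_k$. Therefore
$$\bigcap_j F_{r_j}=\{(i,i+1):i<r_1\}\cup\{(i,i-1):i>r_k\}.$$
This set is empty precisely when $r_1=0$ and $r_k=n$. Whenever the intersection of full simplices is nonempty, it is again a full simplex, so it is contractible. The nerve theorem then applies to the closed cover $\{\overline{F_r}\}$ and gives $\smpure(P_n)\simeq N$. Here $N$ is the simplicial complex on $\{0,\dots,n\}$ whose faces are the subsets not containing both $0$ and $n$.

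\textbf{Step 3: show the nerve is a cone.} Since $n\ge 2$, the vertex $1$ differs from both $0$ and $n$. Adding $1$ to any face of $N$ therefore gives another face of $N$, so $N$ is a cone with apex $1$ and is contractible. Equivalently, $N$ is the union of the two simplices on $\{0,\dots,n-1\}$ and $\{1,\dots,n\}$, glued along the nonempty simplex on $\{1,\dots,n-1\}$. This also shows why $n\ge 2$ is needed: for $n=1$ the nerve is two points.

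\textbf{Expected obstacle.} The main point needing care is Step 1, namely showing that a filtration realizes all $n$ pairs only when the sublevel sets stay connected. This requires checking that a vertex whose descending link is two points, or empty, is strong critical. In the latter case the entire descending open star of that vertex remains unpaired, which costs at least one pair.
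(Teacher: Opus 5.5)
Your proof is correct, but it reaches contractibility by a different route than the paper. The paper lists the $n+1$ facets $F_n=[0,2,\dots,2n-2],\dots,F_0=[1,3,\dots,2n-1]$. It observes that they form a chain in which each simplex meets the next along a common face, and concludes contractibility directly. You instead compute every intersection $\bigcap_j F_{r_j}$ and apply the nerve theorem. This identifies the nerve as the complex on $\{0,\dots,n\}$ whose only missing faces are those containing both $0$ and $n$, which is a cone on $1$.

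Your intersection formula actually supplies the check that the paper's chain argument leaves implicit. It gives $F_r\cap F_s\subseteq F_r\cap F_{r+1}$ for $r<s$, and $F_0\cap F_n=\emptyset$. So if you add the facets in the order $F_n,F_{n-1},\dots,F_0$, each new simplex meets the union of its predecessors in exactly the single face $F_r\cap F_{r+1}$. That face is nonempty, with $n-1\ge 1$ vertices. This is what makes the chain contractible; "consecutive facets share a face" alone would not rule out non-consecutive overlaps closing up a loop.

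The paper's version is shorter and avoids the nerve theorem. Yours gains three things:
\begin{itemize}
\item It settles the non-consecutive overlaps explicitly, as above.
\item It justifies the facet list: the paper's remark that ``choosing a vertex to label with $0$ determines a maximal matching'' tacitly assumes connected sublevel sets, which your Step~1 proves.
\item It shows why $n\ge 2$ is needed: for $n=1$ the nerve is two points and $\smpure(P_1)=\zs^0$.
\end{itemize}
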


\begin{proof}
    The complex $\smpure(P_n)$ has facets $[0,2,\dots,2n-2]$, $[0,2,\dots,2n-4,2n-1]$, $[0,2,\dots,2n-6,2n-3,2n-1]$, ..., $[0,3,5,\dots,2n-1]$, and $[1,3,5,\dots,2n-1]$. These form a chain of $(n-1)$-simplices, each simplex joined to the next along a common face, and hence this space is contractible. 
\end{proof}

\begin{proposition}\label{shortpaths}
We have the following homotopy types:
$$\sm(P_1)= \zs^0;\quad \sm(P_2)\simeq\ast;\quad \sm(P_3)\simeq\ast;\quad \sm(P_4)\simeq \zs^1.$$
\end{proposition}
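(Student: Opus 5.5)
The plan is to first describe every strong Morse matching on $P_n$ explicitly in terms of its vertex filtration, and then read off $\sm(P_n)$ for $n\le 4$ as the contractible complex $\smpure(P_n)$ of Proposition \ref{prop:pnpure} together with whatever extra maximal simplices appear.

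\textbf{Step 1: describe the strong matchings on a path.} Fix a vertex filtration $g$ and process the vertices in increasing order of $g$. In $K_{g(v)}$, the descending link of $v$ consists of the neighbours of $v$ with smaller $g$-value.
\begin{itemize}
\item If $v$ has exactly one smaller neighbour $a$, the descending link is the point $a$, which is a cone. Then $v$ is descending dominated by $a$, and the only pair added is $(v,va)$.
\item If $v$ has no smaller neighbour (a local minimum), the descending link is empty and $v$ is strong critical.
\item If $v$ has two smaller neighbours (an interior local maximum), the descending link is $\zs^0$, so $v$ is strong critical and both of its edges stay unmatched.
\end{itemize}
So a strong matching is obtained by cutting the path at its interior local maxima into segments. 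Each segment contains one local minimum $m$, and every vertex $v\neq m$ in it is paired with the edge from $v$ toward $m$. Consecutive local minima must be separated by at least one interior maximum. When there is a single local minimum (necessarily with no interior maximum), we get the $n+1$ pure facets used in Proposition \ref{prop:pnpure}.

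\textbf{Step 2: the cases $n=1,2,3$.}
\begin{itemize}
\item For $P_1$ the two orderings give the two matchings $\{(1,01)\}$ and $\{(0,01)\}$. These are not compatible, so $\sm(P_1)=\zs^0$.
\item For $P_2$ the only strong matching with an interior maximum has both endpoints as minima, and it is empty. Hence $\sm(P_2)=\smpure(P_2)$, which is contractible by Proposition \ref{prop:pnpure}.
\item For $P_3$ there is one interior maximum at vertex $1$ or at vertex $2$. Every such matching pairs only the vertices of a single segment toward its minimum, and the other segment contributes nothing or is a single vertex. Orienting everything toward that minimum shows the matching lies in a pure facet. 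For example, with maximum at $1$ and minimum at $3$, the matching $\{(2,23)\}$ lies in the facet with minimum $3$. So again $\sm(P_3)=\smpure(P_3)\simeq\ast$.
\end{itemize}

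\textbf{Step 3: the case $n=4$, which carries the only real content.} I check every non-pure strong matching.
\begin{itemize}
\item An interior maximum at $1$ or $3$ cuts off an endpoint segment that contributes no pairs. The same argument as for $P_3$ then puts the matching inside a pure facet.
\item Maxima at both $1$ and $3$ force minima at $0,2,4$, which gives the empty matching.
\item A maximum at $2$ gives the matchings $\{(x,\cdot),(y,\cdot)\}$, where the minimum of the left segment lies in $\{0,1\}$ and the minimum of the right segment lies in $\{3,4\}$. In three of the four cases both pairs point in a common direction, and the matching lies in the pure facet whose minimum is the appropriate vertex. For instance, minima $1,3$ give $\{(0,01),(4,34)\}$, which lies in the facet with minimum $1$.
\item The exception is minima $0$ and $4$. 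This gives the edge $e=\{(1,01),(3,34)\}$, whose two pairs point in opposite directions, so it lies in no pure facet. Its two endpoints are nevertheless vertices of $\smpure(P_4)$.
\end{itemize}
Thus $\sm(P_4)=\smpure(P_4)\cup e$, with $e\cap\smpure(P_4)=\partial e$. Gluing a $1$-cell along its boundary to a contractible complex yields $\sm(P_4)\simeq\zs^1$.

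I expect the main obstacle to be the exhaustive bookkeeping in Step 3. One must be certain that every strong matching other than $e$ lies in a pure facet, and that no further maximal simplex is missed.
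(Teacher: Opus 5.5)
Your proposal is correct and follows essentially the same route as the paper. Like the paper, you show $\sm(P_n)=\smpure(P_n)$ for $n\le 3$, and that $\sm(P_4)$ is the contractible union of the five pure tetrahedra plus one extra edge $[1,6]$ (your $e$) attached along its boundary; the only difference is that you derive the list of maximal simplices from the local-maximum/minimum description of strong matchings rather than listing them outright. One small wording fix: for minima $1,3$ the two arrows point toward each other rather than in a common direction, so the correct criterion is that both arrows point toward a common vertex, and this fails only for minima $0,4$.
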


\begin{proof}
    For $P_1$, we get two possible gradients corresponding to the pairings $01$ and $10$ and so $\sm(P_1)=\zs^0$. The complex $\sm(P_2)$ has four vertices $0,1,2,3$ and edges $[1,3],[0,3],[0,2]$; this is contractible. The complex $\sm(P_3)$ consists of four $2$-simplices: $[0,2,5],[0,3,5],[0,2,4],[1,3,5]$. This is also contractible.

    Finally, the complex $\sm(P_4)$ is shown in Figure \ref{p4diagram}. Its facets are  $[0, 2, 5, 7]$, $[0, 3, 5, 7]$, $[0, 2, 4, 7]$, $[0, 2, 4, 6]$, and $[1, 3, 5, 7]$, along with the edge $[1,6]$. This is clearly homotopic to $\zs^1$ as the subcomplex generated by the five tetrahedra is contractible and contains the boundary of the edge $[1,6]$.
\end{proof}

\begin{figure}
    \begin{tikzpicture}[scale=1]
  \coordinate (V0) at (112.5:3.5);
  \coordinate (V2) at (67.5:3.5);
  \coordinate (V4) at (22.5:3.5);
  \coordinate (V6) at (-22.5:3.5);
  \coordinate (V1) at (-67.5:3.5);
  \coordinate (V3) at (-112.5:3.5);
  \coordinate (V5) at (-157.5:3.5);
  \coordinate (V7) at (157.5:3.5);

  \fill[black!30] (V0) -- (V2) -- (V4) -- cycle;
  \fill[black!30] (V0) -- (V4) -- (V6) -- cycle;
  \fill[black!30] (V0) -- (V7) -- (V5) -- cycle;
  \fill[black!30] (V5) -- (V3) -- (V1) -- cycle;
  \fill[black!30] (V0) -- (V2) -- (V5) -- cycle;
  \fill[black!30] (V0) -- (V3) -- (V5) -- cycle;
  \fill[black!30] (V1) -- (V3) -- (V7) -- cycle;
  \fill[black!30] (V0) -- (V4) -- (V7) -- cycle;

  \draw[thick] (V0) -- (V2) -- (V4) -- (V6) -- (V1) -- (V3) -- (V5) -- (V7) -- cycle;

  \draw[thick] (V0) -- (V4);
  \draw[thick] (V0) -- (V6);
  \draw[thick] (V0) -- (V3);
  \draw[thick] (V0) -- (V5);

  \draw[thick] (V7) -- (V4);
  \draw[thick] (V7) -- (V1);
  \draw[thick] (V7) -- (V3);

  \draw[thick] (V5) -- (V2);
  \draw[thick] (V5) -- (V1);

  \foreach \v in {V0, V1, V2, V3, V4, V5, V6, V7} {
    \fill (\v) circle (3pt);
  }

  \node[above left=2pt] at (V0) {\large\textbf{0}};
  \node[above=3pt] at (V2) {\large\textbf{2}};
  \node[right=4pt] at (V4) {\large\textbf{4}};
  \node[right=4pt] at (V6) {\large\textbf{6}};
  \node[below right=2pt] at (V1) {\large\textbf{1}};
  \node[below left=2pt] at (V3) {\large\textbf{3}};
  \node[left=4pt] at (V5) {\large\textbf{5}};
  \node[left=4pt] at (V7) {\large\textbf{7}};

\end{tikzpicture}
\caption{\label{p4diagram} The complex $\sm(P_4)$.}
\end{figure}

Larger values of $n$ require a deeper analysis. First note that the complexes are not pure. We have already seen that the complex $\sm(P_4)$ contains the maximal simplex $[1,6]$ corresponding to the following strong Morse matching:

\begin{center}
\begin{tikzpicture}

\node[circle, draw, fill=black, inner sep=2pt] (v0) at (0,0) {};
\node[circle, draw, fill=black, inner sep=2pt] (v1) at (2,0) {};
\node[circle, draw, fill=black, inner sep=2pt] (v2) at (4,0) {};
\node[circle, draw, fill=black, inner sep=2pt] (v3) at (6,0) {};
\node[circle, draw, fill=black, inner sep=2pt] (v4) at (8,0) {};

\node[below=6pt] at (v0) {$v_0$};
\node[below=6pt] at (v1) {$v_1$};
\node[below=6pt] at (v2) {$v_2$};
\node[below=6pt] at (v3) {$v_3$};
\node[below=6pt] at (v4) {$v_4$};

\node[above=6pt] at (v0) {$0$};
\node[above=6pt] at (v1) {$1$};
\node[above=6pt] at (v2) {$4$};
\node[above=6pt] at (v3) {$3$};
\node[above=6pt] at (v4) {$2$};

\draw[postaction={decorate, decoration={markings,
    mark=at position 0.5 with {\arrow[thick,scale=1.5]{<}}}}]
    (v0) -- (v1);

\draw (v1) -- (v2) -- (v3);

\draw[postaction={decorate, decoration={markings,
    mark=at position 0.5 with {\arrow[thick,scale=1.5]{>}}}}]
    (v3) -- (v4);

\end{tikzpicture}
\end{center}
\noindent This edge is not contained in any facet of $\sm(P_4)$. Note, however, that this is a face of a $2$-simplex in the full Morse complex ${\mathfrak M}(P_4)$, providing an example to show that the inclusion $\sm(P_4)\subset {\mathfrak M}(P_4)$ is strict.

We now present an alternate description of the facets of $\sm(P_n)$.

\begin{definition}
A \emph{direction string} is a string $d = d_0 d_1 \cdots d_{n-1}$ of length
$n$ over the alphabet $\{{\tt >},\, {\tt <},\, {\tt -}\}$, where ${\tt >}$ at
position $k$ means the rightward edge $(k, k+1)$ is active (contributing vertex
$2k$), ${\tt <}$ at position $k$ means the leftward edge $(k-1, k)$ is active
(contributing vertex $2k+1$), and ${\tt -}$ at position $k$ means no vertex is
contributed at that position.
\end{definition}

\begin{definition}\label{def:valid}
A direction string is \emph{valid} if:
\begin{enumerate}
\item No consecutive ${\tt <>}$ pair appears (a vertex cannot
      support a left arrow and a right arrow simultaneously).
\item Every maximal run of ${\tt -}$ characters has even length.
\end{enumerate}
\end{definition}

Every valid direction string with at least one non-dash character corresponds to a unique simplex of $\sm(P_n)$, and every simplex arises this way. The $(n-1)$-facets correspond to the $n+1$ strings $({\tt >})^j({\tt <})^{n-j}$ for $j=0,1,\dots,n$, obtained by choosing vertex $j$ to be labeled with $0$. The prohibition on odd-length runs of ${\tt -}$ characters arises from the following observation. The string ${\tt <}{-}{\tt >}$ corresponds to four consecutive vertices. Say they are labeled $a,b,c,d$ from left to right. Then we would have $a<b$, $d<c$, and either $b>c$ or $c>b$; that is, the lower link of either $b$ or $c$ would be $S^0$, which is not a cone. Longer runs of odd length can be reduced to this by filling in all but one ${\tt -}$ with an appropriate run of ${\tt <}$ or ${\tt >}$ characters.

\begin{definition}\label{stuck}
A ${\tt --}$ pair at positions $k$, $k+1$ in a valid direction string is \emph{stuck} if
\[
d_{k-1} = {\tt <} \quad \text{and} \quad d_{k+2} = {\tt >}.
\]
The ${\tt <}$ to the left prevents replacing $d_k$ with ${\tt >}$ (that would create a forbidden ${\tt <>}$ at positions $k-1, k$), and the ${\tt >}$ to the right prevents replacing $d_{k+1}$ with ${\tt <}$ (that would create a forbidden ${\tt <>}$ at positions $k+1, k+2$). Neither dash can be made active without violating validity, so the pair genuinely cannot be extended to a larger
matching.
\end{definition}

Note that a simplex is \emph{nonpure-maximal} (maximal among all simplices of dimension less than $n-1$) if and only if:
\begin{itemize}
\item its direction string contains at least one ${\tt --}$ pair (equivalently, it is not a pure facet), and
\item every ${\tt --}$ pair in the string is stuck.
\end{itemize}
This gives an $O(n)$ test per string. The key excluded cases are direction strings with no dashes (those are the $n+1$ pure facets) and strings where some ${\tt --}$ pair is not stuck (those can be extended to a strictly larger string by replacing the unstuck dashes with active edges).

A notable family of nonpure-maximal facets is the \emph{repeated stuck pattern}: the string $({\tt <}{-}{-}{\tt >})^k$ for $n = 4k$, which gives a single maximal simplex of dimension $n/2-1$. For $n \equiv 0 \pmod{4}$ this is the unique facet of minimum dimension among the nonpure maximal simplices.

A final observation about this characterization of simplices in $\sm(P_n)$: no pair of vertices of the form $\{i,i+3\}$ for $i$ odd can occur in a simplex. This would correspond to the string ${\tt <}{\tt -}{\tt >}$, which has an odd number of ${\tt -}$ characters. Pairs $\{j,j+3\}$ {\em are} possible for $j$ even, as these correspond to the string ${\tt >}{\tt <}$.

\begin{theorem}\label{pathhtyptype}
    For $n\ge 4$ we have the following homotopy types:
    $$\sm(P_n) \simeq \begin{cases}
        \zs^{2k-1} & n=4k \\
        \zs^{2k} & n=4k+1 \\
        \ast & n=4k+2,4k+3.
    \end{cases}$$
\end{theorem}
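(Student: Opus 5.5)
The plan is to prove the two-fold suspension recursion
$$\sm(P_n)\simeq \Sigma^2\,\sm(P_{n-4})\qquad (n\ge 4),$$
with the conventions $\sm(P_0)=\emptyset=\zs^{-1}$ and $\sm(P_1)=\zs^0$. Together with Proposition \ref{shortpaths}, this gives the statement. Indeed, $\Sigma^2\zs^{-1}=\zs^1=\sm(P_4)$ and $\Sigma^2\zs^0=\zs^2$, which should be $\sm(P_5)$. Also $\sm(P_2)\simeq\sm(P_3)\simeq\ast$, so $\sm(P_6)$ and $\sm(P_7)$ are contractible. Induction on $k$ then yields the three cases. Throughout I write $X_n=\sm(P_n)$ and work with the description of simplices by direction strings $d_0\cdots d_{n-1}$. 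The first step is to pin that description down carefully. Since $X_n$ is the downward closure of the strong matchings, I must check exactly which strings (including faces with odd dash runs at the ends) occur. For instance, in $X_2$ the vertex $1$ alone, the string ${\tt <}{-}$, is a face. So I expect the precise rule to be: no ${\tt <>}$, and no \emph{interior} odd run of dashes flanked by ${\tt <}$ on the left and ${\tt >}$ on the right. I will verify this rule against $X_2,X_3,X_4$.

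The main tool is Theorem \ref{gencollapse}, applied with a generalized gradient (or a sequence of elementary pairings) organized by the right end of the string. First I match on the vertex $2n-2$, the symbol ${\tt >}$ in the last position. Adding ${\tt >}$ at position $n-1$ is legal unless $d_{n-2}={\tt <}$, or unless it would close off a forbidden odd dash run of the form ${\tt <}{-}\cdots{-}{\tt >}$. So simplices are paired as $\{\sigma,\sigma\cup\{2n-2\}\}$ whenever both are in $X_n$. The unmatched simplices are those whose suffix blocks the ${\tt >}$.

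Next I apply a second matching on the remaining simplices, using the vertex $2n-1$ (${\tt <}$ at the end). Then I iterate on the shortened suffix. The expected outcome is that everything cancels except the simplices whose last four symbols form the rigid stuck block ${\tt <}{-}{-}{\tt >}$ or one of its faces. These sit over an arbitrary simplex of $X_{n-4}$ in the first $n-4$ positions, and the prefix and the block interact only through the ${\tt <>}$ condition at the junction.

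The hardest step is this identification of what survives. I aim to show that the remaining subcomplex has the form $X_{n-4}*\partial\Delta^1*\partial\Delta^1\simeq\Sigma^2X_{n-4}$, or else that it is a cofiber whose homotopy type is that suspension. Here the two $\zs^0$ factors come from the two choices available at each end of the block. The alternative is that the two extremal positions of the block contribute independently through the Proposition \ref{wedgeprop} decomposition $A\cup B$. In that version $A$ is the set of simplices using position $n-4$ actively and $B$ is the set using position $n-1$, and each of $A$ and $B$ is a cone, so $A\cup B\simeq\Sigma(A\cap B)$. Applying this twice gives the double suspension.

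I would try the Proposition \ref{wedgeprop} route first. Its hypotheses only require contractibility of the two pieces, which I expect to get from stars of the vertices $2n-2$ and $2n-7$. I would fall back on an explicit acyclic matching if the intersections fail to match $X_{n-4}$ on the nose. In either route the delicate point is the junction between position $n-5$ and the block: I need to check that the junction condition does not cut $X_{n-4}$ down to a proper subcomplex. If it does, I will have to recover $X_{n-4}$ up to homotopy by a further collapse.
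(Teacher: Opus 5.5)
The target recursion $\sm(P_n)\simeq\Sigma^2\sm(P_{n-4})$, your conventions for $P_0$ and $P_1$, and the induction all match the paper. Splitting off the stuck block ${\tt <}{-}{-}{\tt >}$ at the right end is the mirror image of what the paper does at the left end. The gap is that the proposal never gives a decomposition that actually works, and the concrete ingredients you commit to are wrong.

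\emph{Your face rule is incorrect.} A string is a face of a strong matching if and only if it contains neither ${\tt <}{\tt >}$ nor ${\tt <}{-}{\tt >}$. Equivalently, $\sm(P_n)$ is the flag complex on $\{0,\dots,2n-1\}$ in which a set is a simplex iff it contains no pair $\{i,i+1\}$ and no pair $\{j,j+3\}$ with $j$ odd; call such a pair "neighbors". Odd runs of length at least $3$ between ${\tt <}$ and ${\tt >}$ do occur. In $\sm(P_5)$, the string $[1,8]={\tt <}{-}{-}{-}{\tt >}$ is a face of the strong matching $[1,6,8]={\tt <}{-}{-}{\tt >}{\tt >}$. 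It is also a face of $[1,3,8]={\tt <}{\tt <}{-}{-}{\tt >}$, which your rule accepts, so your rule is not even closed under faces. Testing against $n\le 4$ cannot detect this.

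\emph{The proposed cover fails.} $\text{St}(2n-7)\cup\text{St}(2n-2)$ is not all of $\sm(P_n)$. The facet $({\tt >})^{n-1}{\tt <}$ contains $2n-8$ and $2n-1$, which are neighbors of $2n-7$ and $2n-2$ respectively, so it lies in neither star. Meanwhile, "simplices using position $n-1$" generate the whole complex, which is not a cone.

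\emph{The survivors you describe form a cone.} An arbitrary simplex of $X_{n-4}$ together with a face of ${\tt <}{-}{-}{\tt >}$ gives the subcomplex $X_{n-4}*[2n-7,2n-2]$, which is contractible. In a matching argument the survivors are critical cells such as $\rho\cup\{2n-7,2n-2\}$ with $\rho\in X_{n-4}$. They are not a subcomplex, and you would still have to identify the CW complex they build.

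What is missing is a two-level cover, with the stuck-block vertices entering only at the second level.
\begin{enumerate}
\item The only neighbor of $2n-1$ is $2n-2$, so $\sm(P_n)=\text{St}(2n-1)\cup\text{St}(2n-2)$ is a union of two cones. Hence $\sm(P_n)\simeq\Sigma L$, where $L=\text{St}(2n-1)\cap\text{St}(2n-2)$ is the flag complex on $\{0,\dots,2n-1\}\setminus\{2n-5,2n-3,2n-2,2n-1\}$.
\item In $L$ the only neighbor of $2n-4$ is $2n-7$. So $L=\text{St}_L(2n-4)\cup\text{St}_L(2n-7)$ is again a union of two cones.
\item Their intersection is the flag complex on $\{0,\dots,2n-9\}$, which is exactly $\sm(P_{n-4})$. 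No junction condition arises, because $N(2n-7)=\{2n-8,2n-6,2n-4\}$ lies outside the prefix.
\end{enumerate}
Two applications of Proposition \ref{wedgeprop} then give $\Sigma^2\sm(P_{n-4})$. This is precisely the paper's proof, which uses $\text{St}(0)\cup\text{St}(1)$ and then the cones on $3$ and $6$, transported by the reflection $i\mapsto 2n-1-i$.
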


\begin{proof}
    Note that $\sm(P_n) = \text{St}(0)\cup \text{St}(1)$, both of which are contractible. Further, write $\text{St}(1)=\text{St}(13)\cup \text{St}(16)$. Then $\sm(P_n)\simeq \ast\vee\ast\vee \Sigma(\text{St}(0)\cap(\text{St}(13)\cup \text{St}(16)))$. Now consider this intersection and write it as $$(\text{St}(0)\cap \text{St}(13))\cup (\text{St}(0)\cap \text{St}(16)) = A\cup B.$$ 

    We first make the following observations. Any element of $\text{St}(16)$ cannot contain vertices $2,3,4,5,7$:  the vertices $1$ and $6$ bookend a stuck pair so that $3,4,5$ are eliminated, vertex $2$ cannot occur with vertex $1$, and vertex $7$ cannot occur with vertex $6$. Any element of $\text{St}(13)$ cannot contain vertex $0$, nor can it contain vertices $2$ or $6$ (pairs $i,i+3$ are forbidden for $i$ odd). Any element of $\text{St}(0)$ cannot contain vertex $1$.
    
    Note that every facet of $A$ excludes vertices $0$, $1$, and $2$ and contains the vertex $3$, so that $A$ is a cone with apex $3$ and is therefore contractible. Similarly, every facet of $B$ excludes vertices $0$, $1$, $2$, $3$, $4$, and $5$ and contains vertex $6$ and is thus contractible. It follows that $$A\cup B \simeq \ast\vee\ast \vee\Sigma(A\cap B)$$ and hence $$\sm(P_n)\simeq \Sigma^2(A\cap B).$$
    
    Now note that $$A\cap B = \text{St}(0)\cap \text{St}(13)\cap \text{St}(16).$$ Consider the complex $X=\sm(P_n)\setminus\{0,1,2,3,4,5,6,7\}$. This is isomorphic to the strong Morse complex on $n-4$ vertices; that is, it is isomorphic $\sm(P_{n-4})$. Observe that $A\cap B$ is contained in this complex. Indeed, any element of $\text{St}(16)$ cannot contain vertices $2,3,4,5,7$; any element of $\text{St}(13)$ cannot contain vertices $0$, $2$, or $6$; and any element of $\text{St}(0)$ cannot contain vertex $1$. Thus an element of the intersection lies in $X$. Now note that any facet of $X$ can be extended to a facet of all three terms of the intersection: if a facet has initial vertex $8$, then it can be extended by $[0,2,4,6]$, $[1,3]$, and $[1,6]$, and if a facet has initial vertex $9$, then it can be extended by $[0,3,5,7]$, $[1,3,5,7]$, and $[1,6]$. We conclude that $X$ equals $A\cap B$ and hence $$\sm(P_n)\simeq \Sigma^2\sm(P_{n-4}).$$ The result now follows from Proposition \ref{shortpaths}.
\end{proof}

\begin{remark}
    Contrast this computation of the homotopy type of $\sm(P_n)$ with that of ${\mathfrak M}(P_n)$ \cite[Prop. 4.6]{Kozlov1999}. Here the result depends on the residue of $n$ modulo $4$, while the corresponding modulus for ${\mathfrak M}(P_n)$ is $3$: $${\mathfrak M}(P_n) \simeq \begin{cases}
        \zs^{2k-1} & n=3k \\
        \zs^{2k} & n=3k+1 \\
        \ast & n=3k+2.
    \end{cases}$$
\end{remark}

\section{The star graph $S_n$}\label{sec:star}
Denote the complete bipartite graph $K_{1,n}$ by $S_n$. This is often called the {\em star graph}. The $n=10$ case is shown in Figure \ref{fig:star}.

\begin{figure}
   
\begin{tikzpicture}
  \tikzset{
    vertex/.style={circle, fill=black, inner sep=1.8pt},
    edge/.style={thick, black}
  }
  \node[vertex, label={[label distance=6pt]below:$v_0$}] (v0) at (0,0) {};
  \foreach \i in {1,...,10} {
    \pgfmathsetmacro{\ang}{(\i-1)*36}
    \node[vertex, label={\ang:$v_{\i}$}] (v\i) at (\ang:2) {};
    \draw[edge] (v0) -- (v\i);
  }
\end{tikzpicture}

    \caption{\label{fig:star} The star graph $S_{10}$}
\end{figure}

\begin{theorem}\label{startheorem}
    For all $n\ge 2$, $\sm(S_n)$ is contractible.
\end{theorem}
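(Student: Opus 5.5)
The plan is to list every strong Morse matching on $S_n$ explicitly. Then I will show that $\sm(S_n)$ is one simplex with several further simplices attached, each of which collapses (indeed strongly collapses) back onto it.

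\emph{Notation.} Write $e_i=[v_0,v_i]$ for $1\le i\le n$. The Hasse diagram of $S_n$ has $2n$ edges, which are the vertices of $\sm(S_n)$:
\begin{itemize}
\item $\ell_i=(v_i,e_i)$, pairing a leaf with its edge;
\item $c_i=(v_0,e_i)$, pairing the center with $e_i$.
\end{itemize}

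\emph{Step 1: classify the strong matchings.} I run the algorithm of Section \ref{sec:strong} on an arbitrary vertex ordering and split into cases according to the position of $v_0$.
\begin{itemize}
\item If $v_0$ is first, every leaf has descending link $\{v_0\}$. So each leaf is dominated by $v_0$ and the matching is $F_0=\{\ell_1,\dots,\ell_n\}$.
\item If some leaves precede $v_0$, each of them has empty descending link and is critical. The descending link of $v_0$ is the discrete set of leaves preceding it, and this is a cone exactly when there is one such leaf.
\begin{itemize}
\item If exactly one leaf $v_j$ precedes $v_0$, then $v_0$ is paired with $e_j$. Every later leaf is dominated by $v_0$, giving $F_j=\{c_j\}\cup\{\ell_i: i\ne j\}$.
\item If $m\ge 2$ leaves precede $v_0$, then $v_0$ is critical, and only the later leaves get paired, via the $\ell_i$.
\end{itemize}
\end{itemize}
Every strong matching is therefore either $F_0$, some $F_j$, or a subset of $F_0$. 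Hence $\sm(S_n)$ is generated by the $n+1$ simplices $F_0,F_1,\dots,F_n$, all of dimension $n-1$.

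\emph{Step 2: collapse onto $F_0$.} The vertex $c_j$ lies only in the facet $F_j$, since no matching contains two of the $c$'s. Therefore $\operatorname{lk}(c_j,\sm(S_n))$ is the full simplex on $F_0\setminus\{\ell_j\}$. For $n\ge 2$ this simplex is nonempty, so it is a simplicial cone and $c_j$ is a dominated vertex. Removing the open stars of $c_1,\dots,c_n$ one at a time is thus a sequence of elementary strong collapses. Each removal leaves the links of the remaining $c_k$ unchanged, because the stars of distinct $c$'s are disjoint apart from faces of $F_0$. The result is $\sm(S_n)\sco F_0$, and $F_0$ is a simplex, hence contractible.

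\emph{Main obstacle.} The only delicate point is Step 1. I must be sure that the empty descending link is not treated as a cone, and that the two-or-more-leaves case genuinely leaves $v_0$ critical, so that no matching contains two of the $c$'s. This is exactly what makes each $c_j$ dominated, and it is also why the hypothesis $n\ge 2$ is needed: it ensures $F_0\setminus\{\ell_j\}\neq\emptyset$.
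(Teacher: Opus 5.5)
Your proof is correct. Your Step 1 gives the same classification as the paper. In the paper's numbering, $F_0=[n+1,\dots,2n]$ and $F_i=[i,n+1,\dots,\widehat{n+i},\dots,2n]$, and every other strong matching is a face of $F_0$.

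Where you differ is the finish. The paper declares $F_0,F_1,\dots,F_n$ a shelling order and notes there are no homology facets. You instead strongly collapse away the dominated vertices $c_1,\dots,c_n$.

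Both endings rest on the same fact: each $F_j$ with $j\ge 1$ meets the other facets exactly in its codimension-one face $F_j\setminus\{c_j\}\subset F_0$. The elementary strong collapse at $c_j$ is just the reverse of the shelling step that glues $F_j$ on along that face.

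Your route has two small advantages.
\begin{itemize}
\item It checks this intersection property explicitly. The paper asserts shellability and only observes that $F_i\cap\bigcup_{j<i}F_j$ is not all of $\partial F_i$. That rules out homology facets but is not itself the shelling condition. Your link computation is exactly the missing check that the intersection is the single facet $F_i\setminus\{c_i\}$.
\item It yields the stronger conclusion $\sm(S_n)\sco F_0$. So $\sm(S_n)$ is strong collapsible, i.e.\ it has the strong homotopy type of a point, which fits the theme of the paper.
\end{itemize}

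The shelling language, for its part, is the standard tool that carries over to complexes where some facets are attached along their whole boundary and contribute spheres.
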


\begin{proof}
    Note that $S_1=P_1$ and hence $\sm(S_1)\simeq \zs^0$. Now assume $n\ge 2$. The complex $\sm(P_n)$ has $2n$ vertices corresponding to the $2n$ possible vertex-edge pairs in $S_n$. Order these lexicographically--$01,02,\dots ,0n,10,20,\dots ,n0$--and then number them $1,2,\dots ,2n$.  
    
    If we label vertex $v_0$ with $0$, then for any labeling of the other $n$ vertices, we have $\textrm{lk}^\downarrow(v_i) = v_0$. We therefore obtain the maximal matching in which every vertex $v_i$ is paired with the edge joining it to $v_0$ and $v_0$ is left critical. The corresponding facet of $\sm(S_n)$ is $$[n+1,n+2,\dots ,2n].$$ 

    Now suppose that $v_i$ is labeled with $0$ and $v_0$ is labeled with $1$. For any ordering of the other $n-1$ vertices the lower link will consist of $v_0$ and so we obtain the maximal matchings $$[i,n+1,n+2,\dots ,\widehat{n+i},\dots ,2n]$$ for $1\le i\le n$.
    
    Finally consider labelings where $v_0$ is assigned value $k\ge 2$. The $k$ vertices labeled with values $0,\dots ,k-1$ are unpaired, and when $v_0$ gets its label, these vertices form $\textrm{lk}^\downarrow(v_0)$ and hence $v_0$ is not paired with any edge. If $v_j$ is any remaining vertex, then $\textrm{lk}^\downarrow(v_j) = \{v_0\}$. If $v_{i_1},\dots ,v_{i_{n-k}}$ are these vertices, then we obtain the matching $$[n+i_1,n+i_2,\dots ,n+{i_{n-k}}],$$ which is a face of the facet $[n+1,n+2,\dots ,2n]$ above. It follows that $\sm(S_n)$ is pure with the $n+1$ $(n-1)$-simplices listed above as facets.

    The complex $\sm(S_n)$ is shellable, with shelling order $$[n+1,n+2,\dots ,2n],[1,n+2,\dots ,2n],[2,n+1,n+3,\dots,2n],\dots ,[n,n+1,\dots ,2n-1].$$ There are no generating simplices since for each $i$, $F_i\cap (\bigcup_{j=1}^{i-1}F_j)$ is not the entire boundary of $F_i$ (the initial vertex of each $F_i$ does not appear in any of its predecessors) and so $\sm(S_n)$ is contractible.
\end{proof}

\begin{remark}
    Note that we have an equality $\sm(S_n)={\mathfrak M}(S_n)$. Indeed, any Morse matching on $S_n$ has at most one vertex-edge pair of the form $0i$, and then any remaining vertex may be paired with the edge joining it to $v_0$. Thus the facets of ${\mathfrak M}(S_n)$ all lie in $\sm(S_n)$. Proposition 3.5 of \cite{DonovonLinScoville2023} implies that ${\mathfrak M}(S_n)$ is contractible; Theorem \ref{startheorem} provides an alternate proof of this. 
\end{remark}

\section{The cycle graph $C_n$}\label{sec:cycle}
Let $C_n$, $n\ge 3$, be the $n$-cycle with $n$ vertices and $n$ edges. We adopt the following notation from \cite{ChariJoswig2005}. The vertices of $C_n$ are denoted $v_0,v_1,\dots,v_{n-1}$ in clockwise order, with edges $[v_i,v_{i+1}]$ and indices taken modulo $n$. The $2n$ vertices of $\mathcal{SM}(C_n)$ are identified with numbers $0,1,2\dots,2n-1$ such that the vertex $2i$ corresponds to the pair $(v_i,[v_i,v_{i+1}])$ and $2i+1$ corresponds to the pair $(v_{i+1},[v_i,v_{i+1}])$.

\begin{remark}
    Any strong matching on $C_n$ cannot have an odd number of consecutive unpaired edges. This is by the same argument presented in Definition \ref{def:valid}, since $C_n\setminus v$ for any vertex $v$ is exactly $P_{n-2}$.
\end{remark}

\begin{proposition}
    The complex $\mathcal{SM}(C_n)$ is $(n-3)-$dimensional, $n\geq 3.$
\end{proposition}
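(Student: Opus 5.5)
The plan is to show that every strong Morse matching on $C_n$ has at most $n-2$ pairs, and that some strong matching has exactly $n-2$ pairs. Since a matching with $k$ pairs is a $(k-1)$-simplex of $\sm(C_n)$, this gives $\dim\sm(C_n)=n-3$.

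\emph{Upper bound.} Fix a vertex filtration $g$ and let $v$ and $w$ be its first two vertices. The minimal vertex $v$ has empty descending link, so it is critical. For $w$ there are two cases.

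If $w$ is not adjacent to $v$, then $\textrm{lk}^\downarrow(w)$ is empty. Hence $w$ is also critical, and both $v$ and $w$ are unpaired.

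If $w$ is adjacent to $v$, then $w$ is dominated by $v$ and is paired with $[v,w]$. Now look at the last vertex $u$ in the filtration. Every vertex precedes it, so its descending link is $\textrm{lk}(u,C_n)$, which is two distinct points, i.e.\ $\zs^0$. This is not a cone because $n\ge 3$. So $u$ is strong critical, and so is its descending open star, which consists of $u$ and both of its edges. Those two edges are then left unpaired.

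In a graph every pair is a vertex--edge pair, so the number of pairs is at most the number of vertices, $n$. We now count the simplices forced to be unpaired in each case. In the first case two vertices are unpaired, so there are at most $n-2$ pairs. In the second case $v$ and $u$ are unpaired, so at most $n-2$ vertices are matched, and again there are at most $n-2$ pairs.

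This case split can be avoided. The last vertex $u$ is always critical, since its full link $\zs^0$ is never a cone. The minimal vertex is always critical as well, and it is distinct from $u$ when $n\ge 2$. So at least two vertices are always unpaired, giving at most $n-2$ pairs.

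\emph{Lower bound.} Take the filtration $g(v_i)=i$. For each $1\le i\le n-2$, the descending link of $v_i$ is $\{v_{i-1}\}$, which is a cone, so $v_i$ is paired with $[v_{i-1},v_i]$. Only $v_0$ and $v_{n-1}$ stay critical, which gives $n-2$ pairs. This is exactly the leftmost picture in Figure~\ref{fig:C_6}, where $n=6$ and there are four pairs. Hence $\sm(C_n)$ contains an $(n-3)$-simplex.

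The only step that needs care is the claim that the last vertex is always critical: we must check that its descending link is the whole $\zs^0$ link and that $\zs^0$ is not a simplicial cone.
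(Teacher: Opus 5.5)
Your proof is correct and follows essentially the same route as the paper. Both arguments rest on two facts: the last vertex in any filtration has descending link $\zs^0$, which is not a cone, so it is strong critical; and the filtration $g(v_i)=i$ realizes $n-2$ pairs. The only difference is in the counting. The paper notes that the last vertex's two edges are unpaired, leaving at most $n-2$ edges to match, whereas you note that the first and last vertices are both unpaired, leaving at most $n-2$ vertices to match; either bound suffices.
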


\begin{proof}
    Any vertex filtration on $C_n$ ends with some $v_i$ closing the cycle. The vertex $v_i$ is not descending dominated, since its lower link in $C_n$ is the disjoint union of $v_{i-1}$ and $v_{i+1}$. So $v_i,[v_{i-1},v_i],$ and $[v_i,v_{i+1}]$ are not included in the resulting strong Morse matching. The result is, at most, a strong $(n-2)-$matching, which can be achieved by numbering the vertices by their indices. A strong $(n-2)-$matching on $C_n$ is represented in $\mathcal{SM}(C_n)$ by a $(n-3)-$simplex.
\end{proof}

\begin{proposition}
    The number of $(n-3)-$simplices in $\mathcal{SM}(C_n)$ is $n(n-1)$, $n\geq 3$.
\end{proposition}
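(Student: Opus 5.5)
We count the strong $(n-2)$-matchings on $C_n$ directly. By the proof of the previous proposition, a maximal-size strong matching arises from a vertex filtration whose last vertex $v_i$ closes the cycle. The vertex $v_i$ and its two incident edges $[v_{i-1},v_i]$ and $[v_i,v_{i+1}]$ are unmatched, and every other vertex must be matched. The remaining $n-1$ vertices and $n-2$ edges form the path $C_n\setminus v_i\cong P_{n-2}$. So the strong $(n-2)$-matchings with top vertex $v_i$ are exactly the restrictions to this path of strong matchings that match all but one of its vertices. These are the full facets of $\sm(P_{n-2})$, possibly together with further configurations that we must rule out or include.

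\textbf{Step 1.} We show that the matching induced on $C_n$ by a filtration ending at $v_i$ agrees, on the simplices of $C_n\setminus v_i$, with the matching the same ordering induces on the path $P_{n-2}$. This holds because for every vertex $u\ne v_i$ the descending link of $u$ in $C_n$ does not involve $v_i$, since $v_i$ comes last. Consequently the $(n-2)$-matchings with last vertex $v_i$ correspond bijectively to the matchings of size $n-2$ on $P_{n-2}$.

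\textbf{Step 2.} From Section~\ref{sec:path} we know there are exactly $(n-2)+1=n-1$ such matchings on $P_{n-2}$. They are given by the strings $({\tt >})^j({\tt <})^{n-2-j}$, and each is determined by which path vertex is the critical minimum.

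\textbf{Step 3.} Ranging over the $n$ choices of $i$ gives at most $n(n-1)$ matchings. It remains to check that these are pairwise distinct. The unmatched vertices of the matching are $v_i$ together with the path minimum $v_j$, and the unmatched edges are precisely the two edges incident to $v_i$. Since $n\ge 3$, those two edges are distinct and $v_i$ is their unique common vertex, so $v_i$ is recovered from the matching. The path facet, and hence $j$, is then recovered as well. Therefore the map $(i,j)\mapsto$ matching is injective.

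\textbf{Step 4.} Finally, each of these matchings is a facet of $\sm(C_n)$, because $\dim\sm(C_n)=n-3$. Every $(n-3)$-simplex of $\sm(C_n)$ must be a whole generating strong matching rather than a proper face of one, since no strong matching has more than $n-2$ pairs. So the count of $(n-3)$-simplices equals the count of strong $(n-2)$-matchings, which is $n(n-1)$.

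The main obstacle is the injectivity check in Step 3. The degenerate case $n=3$ needs separate care, because there $P_1$ gives the two matchings $01$ and $10$ and the edge data must still determine $v_i$. The argument above handles it, since the critical edges are the two edges at $v_i$ and they meet only in $v_i$.
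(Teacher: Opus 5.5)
Your proof is correct and follows the paper's argument: fix the vertex $v_i$ whose two incident edges are unpaired, identify the remaining pairs with one of the $n-1$ top-dimensional facets of $\sm(P_{n-2})$, and multiply by the $n$ choices of $i$. Your Steps 1, 3 and 4 spell out three checks the paper leaves implicit: the matching restricts correctly to the path, the map $(i,j)\mapsto$ matching is injective, and every $(n-3)$-simplex is a whole strong matching. (One wording slip: not every vertex other than $v_i$ is matched, since the filtration minimum is also critical, but you correct this in the next sentence and in Step 3.)
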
 

\begin{proof}
    Any $(n-3)-$simplex is the result of a strong Morse matching on $C_n$ that has a single gap of two unpaired edges. Let $v_{i}$ be the vertex whose two edges are unpaired; all other vertices and edges are paired. Then $C_n\setminus v_{i}=P_{n-2}$, and $\sm(P_{n-2})$ has exactly $n-1$ simplices of maximal dimension $n-3$. The symmetry of $C_n$ results in $n(n-1)$ $(n-3)-$dimensional simplices in $\sm(C_n)$, the $n$ copies  corresponding to the $n$ possibilities of which vertex goes unpaired in $C_n$.
\end{proof}

\begin{theorem} The complex $\mathcal{SM}(C_n)$ is pure for $n\le 7$.
\end{theorem}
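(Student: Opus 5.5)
We need to show that for $n\le 7$, every strong Morse matching on $C_n$ extends to one with $n-2$ pairs. The plan is to describe strong matchings on $C_n$ combinatorially, as was done for $P_n$ with direction strings, and then do a short case analysis on gap lengths.

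First, fix a vertex filtration $g$ on $C_n$. The vertex $v_i$ with the largest label is strong critical, and by the argument in the proof that $\dim\sm(C_n)=n-3$, neither of its two edges is paired. Each of the other vertices is the last in the filtration of a subcomplex that is a union of paths. The descending link of a vertex in a graph is a discrete set of earlier neighbours. It is a cone exactly when it is a single vertex, which then dominates it.

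So a strong matching on $C_n$ is encoded by a cyclic direction string on the $n$ edges with the following properties. Each edge carries ${\tt >}$, ${\tt <}$ or ${\tt -}$. No vertex receives two arrows. Every maximal run of unpaired edges has even length, by the remark following the definition of $C_n$. There is at least one run of ${\tt -}$ (of length $\geq 2$), namely the run around the final vertex. The facets of dimension $n-3$ are exactly the strings with a single ${\tt --}$ run of length two, by the count of $(n-3)$-simplices above.

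Conversely, I would check that any such cyclic string with a distinguished ${\tt --}$ run extends to a filtration. Remove the top vertex $v_i$ at the center of that run; this leaves a valid direction string on $P_{n-2}$, which we realize using Section~\ref{sec:path}. The point to verify is that the remaining dashed runs behave as they did on the path.

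The key step is the extension argument. Take a simplex that is not contained in an $(n-3)$-facet. It has a cyclic string with at least two dash runs, or one run of length $\ge 4$; the total number of dashes is $\geq 4$ and even. A run of length $\geq 4$ can always be shortened: keep one ${\tt --}$ and fill the rest with arrows pointing away from the kept pair, toward its endpoints, consistently with the neighbours. So I only need to deal with runs of length exactly $2$.

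Now take two ${\tt --}$ runs. A run is stuck, in the sense of Definition~\ref{stuck}, only if it is flanked by ${\tt <}$ on the left and ${\tt >}$ on the right. If a run is not stuck, filling one neighbouring dash and shifting produces a larger string; I would check carefully that the cyclic parity constraint is preserved. So maximality with two runs requires both runs to be stuck. Between the two stuck runs, reading cyclically, the arrows must go from a ${\tt >}$ to a ${\tt <}$. Each such arc contains at least one ${\tt >}$ and one ${\tt <}$, hence at least 2 edges, as in the $({\tt <}{-}{-}{\tt >})^k$ pattern. The total length is then at least $2+2+2+2=8$. Hence for $n\le 7$ no nonpure maximal simplex exists, and $\sm(C_n)$ is pure. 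For $n=8$ the string $({\tt <}{-}{-}{\tt >})^2$ shows the bound is sharp, which is a useful sanity check.

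I expect the main obstacle to be making the cyclic encoding rigorous, especially proving that every valid cyclic string is realized by some filtration. I would handle this by reducing to $P_{n-2}$ after deleting the vertex between one chosen ${\tt --}$ pair. The fallback, since $n\le 7$ is small, is direct enumeration of filtrations up to the dihedral symmetry.
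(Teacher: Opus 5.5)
Your proposal is correct and follows the paper's own argument. A maximal simplex of dimension less than $n-3$ must have at least two stuck ${\tt --}$ pairs, and each arc between consecutive stuck pairs starts with ${\tt >}$ and ends with ${\tt <}$, which forces $n\ge 8$; you also supply the details the paper's two-line proof leaves implicit (shortening runs of length $\ge 4$, removing unstuck pairs, realizing cyclic strings by filtrations). Two small fixes: remove an unstuck pair by filling \emph{both} dashes with the arrow compatible with its non-blocking neighbour, since filling only one leaves an odd run, and run the extension argument on maximal simplices, which are genuine strong matchings, since arbitrary faces can have odd dash runs.
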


\begin{proof}
    For $n\ge 8$ we obtain maximal simplices of dimension less than $n-3$ because of the existence of two or more stuck pairs. For $n\le 7$, there can be at most one stuck pair in a maximal matching; that is, all maximal simplices have dimension $n-3$.
\end{proof}

\begin{proposition}
We have the following homotopy types:
\begin{gather*}
      \sm(C_3)\simeq\bigvee_5 \zs^0;\quad \sm(C_4)\simeq\bigvee_5 \zs^1;\quad \sm(C_5)\simeq \zs^1.
\end{gather*}
\end{proposition}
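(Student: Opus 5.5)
The plan is to compute each complex directly from an explicit list of its facets. These facets are supplied by the two preceding propositions together with the facet description of $\sm(P_{n-2})$. Every maximal strong matching on $C_n$ leaves exactly one vertex $v_i$ unmatched, along with its two incident edges. It then restricts to a maximal strong matching on $C_n\setminus v_i\cong P_{n-2}$, given by a direction string $({\tt >})^j({\tt <})^{n-2-j}$. Since $\sm(C_n)$ is pure for $n\le 7$, these $n(n-1)$ simplices of dimension $n-3$ generate the whole complex. So in each case I would write down the facets in the numbering $0,\dots,2n-1$ and analyze the resulting small complex.

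For $C_3$ the complex is $0$-dimensional. Each filtration gives exactly one pair: the second vertex is matched with the edge to the first, and the third vertex closes the cycle and is critical. All six vertex-edge pairs occur, so $\sm(C_3)$ is six points, i.e. $\bigvee_5\zs^0$.

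For $C_4$ the complex is a graph with $8$ vertices and $n(n-1)=12$ edges. For each $v_i$, the three facets of $\sm(P_2)$, namely $[0,2]$, $[0,3]$ and $[1,3]$ in local numbering, translate into a path of three edges. I would check that these four paths, one for each $i$, share vertices so that the graph is connected, and that all $8$ vertices occur. Then the Euler characteristic $8-12=-4$ gives a connected graph of rank $5$, i.e. $\bigvee_5\zs^1$.

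For $C_5$ there are $20$ triangles on $10$ vertices. For fixed $v_i$, the four facets coming from $\sm(P_3)$ form a strip of four triangles, each sharing an edge with the next. This is the chain from Proposition \ref{prop:pnpure} and is contractible. I would identify which edges are shared between strips for different $i$. I expect each strip to meet its rotational neighbours only in edges along its two ends, so that the five strips are glued cyclically.

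Carrying this out, I would exhibit free edges in the union: edges interior to a strip that lie in only one triangle of the whole complex. I would then collapse each strip, via Theorem \ref{gencollapse} or by elementary collapses, down to a path joining its two gluing edges. The result is a cycle of length $5$ up to subdivision, giving $\zs^1$. As a consistency check I would count edges: the answer $\zs^1$ forces $\chi=0$, so there must be $30$ edges.

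The main obstacle is the $C_5$ bookkeeping. One must verify exactly which edges of different strips coincide, namely which pairs of vertex-edge pairs occur together in matchings for two different unmatched vertices. The collapse must not be blocked by an edge lying in triangles from three strips. If that happens, I would instead compute $H_*$ from the f-vector and the boundary matrix, using $H_2=0$ (no closed $2$-cycle among the strips). I would then conclude via connectivity and $H_1\cong\zz$ that the complex is homotopy equivalent to a circle, since a $2$-complex that collapses onto a graph has that graph's homotopy type.
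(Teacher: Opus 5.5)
Your $C_3$ argument is the paper's. For $C_4$ you replace the paper's decomposition $\sm(C_4)=A\cup B$ and Proposition \ref{wedgeprop} with an Euler characteristic count on a connected graph with $8$ vertices and $12$ edges. This is correct and more elementary: the path for the unmatched vertex $v_0$ is $[3,5],[2,5],[2,4]$, and its shifts by $2$ give exactly the twelve edges of $A\cup B$. For $C_5$ the paper writes down a $20$-pair discrete gradient whose critical cells form $Z=(\text{St}(0)\cup\text{St}(6))\cap(\text{St}(1)\cup\text{St}(7))$. This is a disk $[1,6,8]\cup[1,6,9]$ with the arc $8,3,5,2,4,9$ attached, and it is chosen to foreshadow Lemma \ref{lem:intersection}. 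You instead collapse along free edges all the way to a graph. That leaves no residual complex to identify, and the rotation $j\mapsto j+2$ reduces all checking to six orbits of edges. That route works, but the geometry is not what you predict.

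Write $T_0=\{[2,4,6],[2,4,7],[2,5,7],[3,5,7]\}$ for the strip with $v_0$ unmatched, and $T_i=T_0+2i$. Neighbouring strips do not meet only at their ends. For example, $T_0\cap T_1$ is the path $6,4,7,5$, which runs along the whole strip and contains the interior edge $[5,7]$ of $T_0$. That edge therefore lies in three triangles, $[2,5,7]$, $[3,5,7]$, $[5,7,9]$; likewise $[2,4]$ lies in $[2,4,6]$, $[2,4,7]$, $[0,2,4]$. Non-neighbouring strips meet in two isolated points, e.g.\ $T_0\cap T_2=\{6,7\}$. Consequently you cannot collapse strip by strip. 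After you remove the end triangles of $T_0$, its middle triangles have no free edge until the end triangles of $T_1$ and $T_4$ are also gone.

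Do the collapse in two global rounds instead. First, the ten edges in the orbits of $[2,6]$ and $[3,7]$ are free, and you collapse the ten end triangles through them. This makes the orbits of $[2,4]$ and $[3,5]$ free. Second, collapse the ten middle triangles $[2,4,7]$, $[2,5,7]$ and their rotations through $[2,4]$ and $[5,7]$. What remains is the $10$-cycle with consecutive vertices $0,3,8,1,6,9,4,7,2,5$, so $\sm(C_5)\simeq\zs^1$. Each strip ends up as a three-edge path, and neighbouring paths share an end edge, so your final picture of a cycle up to subdivision is right.

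Finally, discard the homology fallback. For a connected $2$-complex, $H_1\cong\zz$ and $H_2=0$ do not force homotopy equivalence with $\zs^1$; spines of nontrivial knot complements are counterexamples. The clause you cite also presupposes exactly the collapse that the fallback was meant to replace.
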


\begin{proof}
     There are six possible strong 1-matchings on $C_3$, and as $\sm(C_3)$ is $0$-dimensional, we have $\sm(C_3)\simeq\bigvee_5\zs^0$.

    Note that $\sm(C_4)=A\cup B$ where $$A=\{[0,2],[2,4],[4,6],[0,6],[1,6],[0,3],[2,5],[4,7]\}$$ and $$B=\{[1,3],[3,5],[5,7],[1,7]\}.$$ One checks easily that $A\simeq\zs^1\simeq B$ and $A\cap B=\{1,3,5,7\}\simeq\bigvee_3\zs^0$. By Proposition \ref{wedgeprop} we have $$\sm(C_4)\simeq A\vee B\vee\Sigma(A\cap B)\simeq\bigvee_5\zs^1$$

    To foreshadow the calculations for $n\ge 6$, we view $\sm(C_5)$ as a union of two spaces: $\text{St}(0)\cup\text{St}(6)$ and $\text{St}(1)\cup\text{St}(7)$. We construct an explicit discrete gradient on $\sm(C_5)$; the pairings are as follows.
    
\begin{center}
\setlength{\tabcolsep}{10pt} 
\begin{tabular}{ c c c c }
 $[7]<[7,9]$ & $[0,8]<[0,3,8]$ & $[1,3]<[1,3,8]$ & $[3,7]<[3,5,7]$ \\ 
 $[0]<[0,2]$ & $[4,6]<[4,6,9]$ & $[1,5]<[1,3,5]$ & $[3,9]<[1,3,9]$ \\  
 $[0,4]<[0,2,4]$ & $[0,6]<[0,6,8]$ & $[1,7]<[1,7,9]$ & $[4,7]<[4,7,9]$ \\
 $[0,5]<[0,2,5]$ & $[2,6]<[2,4,6]$ & $[2,7]<[2,4,7]$ & $[5,7]<[2,5,7]$ \\
 $[0,3]<[0,3,5]$ & $[4,8]<[4,6,8]$ & $[2,8]<[0,2,8]$ & $[5,9]<[5,7,9]$
\end{tabular}
\end{center}

\noindent The cells left critical are $[1,6,8],[1,6,9],[2,4],[2,5],[3,5],[3,8],[4,9],[1],[2],[3],$\\$[4],[5],[6],[8],[9]$. This is precisely the intersection $(\text{St}(0)\cup\text{St}(6))\cap (\text{St}(1)\cup\text{St}(7))$. This space has the  homotopy type of $\zs^1$, as shown in Figure \ref{c5diagram}. 
\end{proof}

Recall that $\sm(C_n)=\smpure(C_n)$ for $n\leq 7$. There is no discernible pattern in the homotopy types of $\smpure(C_n)$ for small $n$. However, for $n\geq6$, a pattern emerges. 

\begin{figure}
\begin{tikzpicture}[
  vtx/.style={circle, fill=black, inner sep=1.6pt},
  lbl/.style={font=\small}, scale=1
]

\coordinate (v1) at (0,0);
\coordinate (v6) at (-3.13,2.49);
\coordinate (v8) at (0,4);
\coordinate (v9) at (-3.90,-0.89);
\coordinate (v3) at (3.13,2.49);
\coordinate (v5) at (3.90,-0.89);
\coordinate (v2) at (1.74,-3.60);
\coordinate (v4) at (-1.74,-3.60);

\fill[black!30]  (v1) -- (v6) -- (v8) -- cycle;
\fill[black!30] (v1) -- (v6) -- (v9) -- cycle;

\draw[thick] (v1) -- (v6);
\draw[thick] (v1) -- (v8);
\draw[thick] (v6) -- (v8);
\draw[thick] (v1) -- (v9);
\draw[thick] (v6) -- (v9);

\draw[thick] (v2) -- (v4);
\draw[thick] (v2) -- (v5);
\draw[thick] (v3) -- (v5);
\draw[thick] (v3) -- (v8);
\draw[thick] (v4) -- (v9);

\node[vtx, label={below:\large\textbf 1}] at (v1) {};
\node[vtx, label={left:\large\textbf 6}]  at (v6) {};
\node[vtx, label={above:\large\textbf 8}] at (v8) {};
\node[vtx, label={left:\large\textbf 9}]  at (v9) {};
\node[vtx, label={right:\large\textbf 3}] at (v3) {};
\node[vtx, label={right:\large\textbf 5}] at (v5) {};
\node[vtx, label={below:\large\textbf 2}] at (v2) {};
\node[vtx, label={below:\large\textbf 4}] at (v4) {};

\end{tikzpicture}
\caption{\label{c5diagram} The critical simplices of the gradient on $\sm(C_5)$.}
\end{figure}

\begin{theorem}\label{thm:cnpure} The homotopy type of $\smpure(C_n)$, for $n\geq 6$, is
\begin{equation*}
\smpure(C_n)\simeq
    \begin{cases}
        \mathbb{S}^{2}\vee\mathbb{S}^{2k-2}\vee\mathbb{S}^{2k-2}\vee\mathbb{S}^{2k-2}\vee\mathbb{S}^{2k-2} & \text{if } n=3k\\
        \mathbb{S}^{2}\vee\mathbb{S}^{2k-1}\vee\mathbb{S}^{2k-1} & \text{if } n=3k+1, 3k+2\\
    \end{cases}
\end{equation*}
\end{theorem}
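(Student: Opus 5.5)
We first describe the facets of $\smpure(C_n)$ explicitly. By the count above there are $n(n-1)$ of them. Each is determined by the unique vertex $v_i$ whose two incident edges are unpaired, together with a pure facet of $\sm(P_{n-2})$ on the path $C_n\setminus v_i$, that is, a direction string $({\tt >})^j({\tt <})^{n-2-j}$. In the numbering of vertices of $\sm(C_n)$, such a facet consists of even labels $2m$ on a clockwise arc, followed by odd labels $2m+1$ on the complementary arc. The two omitted pairs sit at the gap $v_i$. Equivalently, a facet is specified by an ordered pair (gap vertex $v_i$, root vertex $v_r$ with $r\neq i$), with all arrows flowing toward $v_r$. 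This is the same bookkeeping Chari--Joswig use for ${\mathfrak M}_{\textrm{pure}}(C_n)$. The difference is that there every vertex $v_i$, not just a two-edge gap, can be the unmatched spot, and this is exactly why the strong version is more delicate.

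Following the approach we foreshadowed for $\sm(C_5)$, we split
\[
\smpure(C_n)=A\cup B,\qquad A=\text{St}(0)\cup\text{St}(2k')\ \text{-type union of stars of even labels},\quad B=\text{analogous union for odd labels}.
\]
More precisely, we take $A$ to be the union of the facets containing vertex $0$ (the pair $(v_0,[v_0,v_1])$) and $B$ its complement's facets. Iterating this, we peel off facets whose root lies in a fixed region. Each piece is a chain of simplices glued along common faces, as in Proposition \ref{prop:pnpure}, hence contractible. We then apply Proposition \ref{wedgeprop} repeatedly: $A\cup B\simeq \Sigma(A\cap B)$ whenever $A$ and $B$ are contractible. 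This reduces everything to computing intersections. Those intersections are again complexes of strong matchings on shorter paths or cycles with a few forced or forbidden labels. Our expectation, modeled on the proof of Theorem \ref{pathhtyptype}, is to identify one intersection with a double suspension of a smaller complex. This would give a recursion $\smpure(C_n)\simeq$ (wedge of a fixed $\zs^2$ summand) $\vee\ \Sigma^2(\text{something for } C_{n-3})$. Here the period $3$ comes from the observation that a gap of two dashes plus one root absorbs three cycle vertices. The $\zs^2$ summand would arise, as in Chari--Joswig, from the "rotation" of the gap/root pair around the cycle, which forms a $2$-dimensional torus-like family that collapses to a sphere.

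To organize the contractible pieces and the leftover critical cells, we build an explicit generalized discrete gradient (Theorem \ref{gencollapse}). We pair faces by adding or removing a canonical label, such as the lowest-index even label compatible with the string, so that $\smpure(C_n)$ collapses onto a small subcomplex. We then count the critical cells: one in dimension $2$ plus either four in dimension $2k-2$ (when $n=3k$) or two in dimension $2k-1$ (when $n=3k+1,3k+2$). We need to check that there are no other critical cells and that the attaching maps are trivial, for instance because the critical cells lie in dimensions differing by more than one, or via a wedge decomposition from Proposition \ref{wedgeprop}. We verify the base cases $n=6,7,8$ directly by computing the gradient, as we did for $C_5$.

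The main obstacle is the intersection step. Unlike the path case, a face of $\smpure(C_n)$ can lie in facets with different gap positions, so the intersections are not simply cones or copies of smaller complexes. The residue of $n$ mod $3$ changes how the final stuck gap interacts with the wraparound. We would first try to make the $n=3k$ case produce four top-dimensional critical cells coming from the extra symmetric configurations $({\tt >}{\tt <}{-}{-})$-type patterns that tile the cycle exactly. We would then check by the gradient computation that these patterns are absent when $3\nmid n$, leaving two.
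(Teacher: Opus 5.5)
Your proposal is an outline rather than a proof, and the steps it does commit to would not go through.

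\textbf{Where the outline fails.}
\begin{itemize}
\item Your first instinct, an ``even'' union of stars against an ``odd'' one, is the right one. But you replace it by $\text{St}(0)$ versus the subcomplex generated by the facets not containing $0$, and claim each piece is a chain of simplices as in Proposition \ref{prop:pnpure}. That is false for the second piece. It contains facets with every gap position (put the root immediately after the gap, so all labels are odd). So you have neither its contractibility nor the intersection.
\item A recursion $\smpure(C_n)\simeq\zs^2\vee\Sigma^2(\cdots)$ cannot be run on $\smpure(C_{n-3})$ itself, because that would double-suspend the $\zs^2$. You must first isolate the $\zs^2$ and then find an auxiliary complex with a period-$3$ double-suspension recursion. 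The proposal identifies neither.
\item The gradient step only restates the answer as a critical-cell count. Your proposed control of attaching maps fails. For $n=7,8$ the spheres lie in dimensions $2$ and $3$. For $n=9$ they lie in dimensions $2$ and $4$, yet a $4$-cell may still attach by a Hopf-type map, since $\pi_3(\zs^2)\neq 0$.
\item The ${\tt >}{\tt <}{-}{-}$ heuristic is also off. A pure facet has exactly one gap, and a $4$-letter pattern tiles the cycle only when $4\mid n$.
\end{itemize}

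\textbf{The missing ideas, as the paper carries them out.}
\begin{itemize}
\item A pure facet leaves exactly two consecutive edges unpaired. Hence $\smpure(C_n)=(\text{St}(0)\cup\text{St}(6))\cup(\text{St}(1)\cup\text{St}(7))$, and the two halves are isomorphic.
\item Their intersection $Z$ is generated by four explicit families of facets. Repeated use of Proposition \ref{wedgeprop} shows $Z\simeq\zs^1$, and $\Sigma Z$ is the $\zs^2$ summand.
\item Each half collapses onto the subcomplex $E_n$ spanned by the even labels (respectively $O_n$). This uses Theorem \ref{gencollapse} with intervals $[\alpha_\tau,\tau]$, where $\alpha_\tau$ is the odd-labelled face of $\tau$.
\item Then $E_n\simeq B_n\vee\Sigma D_n$, with $B_n=E_n\setminus 0$ and $D_n=\text{St}_{E_n}(0)\cap B_n$. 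Each of $B_n$ and $D_n$ is split twice as (closed star of its largest label) $\cup$ (a simplex). The second intersection is $B_{n-3}$, respectively $D_{n-3}$, so $B_n\simeq\Sigma^2B_{n-3}$ and $D_n\simeq\Sigma^2 D_{n-3}$.
\item Stripping three even labels per step is the real source of the period $3$.
\item The base cases $n=6,7,8$ show where the spheres live. For $n=3k$, both $B_n$ and $\Sigma D_n$ give $\zs^{2k-2}$, hence four spheres from $E_n$ and $O_n$ together. For $n=3k+1$ only $D_n$ contributes, and for $n=3k+2$ only $B_n$.
\item Attaching maps are never analyzed directly; every splitting comes from Proposition \ref{wedgeprop}. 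In the last splitting the halves are simply connected, being equivalent to wedges of spheres of dimension at least $2$. So $Z\simeq\zs^1$ includes null-homotopically.
\end{itemize}
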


\begin{proof}
We first make an observation. The space $\smpure(C_n)$ is the union of $\text{St}(0)$, $\text{St}(1)$, $\text{St}(6)$, and $\text{St}(7)$. Indeed, if a facet does not lie in $\text{St}(0) \cup \text{St}(1)$, then the edge $[v_0,v_1] \in C_n$ is unpaired. It could be that the edge $[v_1,v_2]$ is also unpaired, but then the edge $[v_2,v_3]$ must be paired; that is, the corresponding facet lies in $\text{St}(6)\cup\text{St}(7)$. If $[v_1,v_2]$ is paired, then since $[v_0,v_1]$ is unpaired, it must be the case that $[v_2,v_3]$ is also paired (since a single paired edge cannot be flanked by two unpaired edges); that is, the facet lies in $\text{St}(6)\cup\text{St}(7)$.

Note also that $\text{St}(0)\cup\text{St}(6) \cong \text{St}(1)\cup\text{St}(7)$. So we must compute the homotopy types of $\text{St}(0)\cup\text{St}(6)$ and the intersection
$$(\text{St}(0)\cup\text{St}(6)) \cap (\text{St}(1)\cup\text{St}(7)).$$

\begin{lemma}\label{lem:intersection}
    The space $(\text{St}(0)\cup\text{St}(6)) \cap (\text{St}(1)\cup\text{St}(7))$ has the homotopy type of $\zs^1$.
\end{lemma}

\begin{proof}
    Denote this intersection by $Z$. The facets of $Z$ fall into four classes, determined by where stuck pairs appear, giving a gap of $5$ in the vertices. The groups are
    \begin{eqnarray*}
        A & = & \{[0,2,4,7,\dots,2n-5],[0,2,4,7,\dots,2n-7,2n-2],\dots, \\
        &  & \hskip 6pt [0,2,4,7,12,\dots,2n-2],[0,3,5,\dots,2n-5], \\
        &  & \hskip 6pt [0,3,5,\dots,2n-7,2n-2],\dots,[0,3,5,7,12,\dots,2n-2]\} \\
        B & = & \{[1,6,8,\dots,2n-2],[1,6,8,\dots,2n-4,2n-1],\dots, \\
        &  &  \hskip 6pt [1,6,8,11,\dots,2n-1], [1,6,9,11,\dots,2n-1]\} \\
        C & = & \{[2,4,9,11,\dots,2n-3],[4,9,11,\dots,2n-3,2n-1]\} \\
        D & = & \{[3,5,10,12,\dots, 2n-2],[3,8,10,12,\dots,2n-2]\}
    \end{eqnarray*}

    Note that $A$,$B$,$C$, and $D$ are all contractible: $A$ and $B$ are cones and $C$ and $D$ both consist of a pair of simplices joined along a common face. Note also that $C\cap D=\emptyset$. Our strategy is to consider various unions and intersections so that we may apply Proposition \ref{wedgeprop}. For now, assume the various inclusions of intersections into the corresponding spaces are null-homotopic. This will be justified as we move along. First, we have
    \begin{eqnarray*}
        Z & = & (A\cup B\cup C) \cup D \\
          & \simeq & (A\cup B\cup C) \vee \ast \vee \Sigma((A\cup B \cup C)\cap D).
    \end{eqnarray*}
    Observe that 
    \begin{eqnarray*}
        (A\cup B\cup C)\cap D & = & (A\cap D)\cup (B\cap D)\cup (C\cap D) \\
          & = & \{[3,5,12,\dots,2n-2],[8,10,12,\dots,2n-2]\}.
    \end{eqnarray*}
    For $n\ge 7$, this space is contractible as it is two simplices joined along a common face. We conclude that for $n\ge 7$, $Z\simeq A\cup B\cup C$. When $n=6$, this space is the disjoint union of the two intervals $[3,5]$, $[8,10]$, and so in this case we have $$Z\simeq (A\cup B\cup C) \vee \Sigma \zs^0 = (A\cup B\cup C)\vee \zs^1.$$ This last assertion requires that $A\cup B\cup C$ be connected, which we will show below.

    Now, 
    \begin{eqnarray*}
        A\cup B\cup C & = & (A\cup C)\cup B \\
           & \simeq & (A\cup C) \vee \ast \vee \Sigma((A\cup C)\cap B).
    \end{eqnarray*}
    We have, for $n\ge 7$,
    \begin{eqnarray*}
        (A\cup C)\cap B & = & \{[12,14,\dots,2n-2],[9,11,\dots,2n-1]\} \\
           & \simeq & \zs^0.
    \end{eqnarray*}
    Thus, for $n\ge 7$,
    \begin{eqnarray*}
        A\cup B\cup C & \simeq & (A\cup C)\vee \Sigma\zs^0\\
         & = & (A\cup C)\vee \zs^1.
    \end{eqnarray*}
    Also, for $n\ge 7$, we have
    $$A\cup C \simeq  \ast\vee\ast\vee \Sigma(A\cap C),$$
    where
    $$A\cap C = \{[2,4,9,11,\dots,2n-5]\} \simeq\ast.$$
    Thus, for $n\ge 7$ we have $Z\simeq \zs^1$.

    When $n=6$, we have $$(A\cup C)\cap B = \{[9,11]\} \simeq \ast,$$ so that $A\cup B \cup C \simeq A\cup C$. Finally, note  that $$A\cup C = \{[0,3,5,7],[0,2,5,7],[0,2,4,7],[2,4,9],[4,9,11]\},$$  and this space is contractible (shellable, even, in the order given). It follows that for $n=6$ we also have $Z\simeq (A\cup B\cup C) \vee \zs^1 \simeq \zs^1$.
\end{proof}

Next, let $E_n$  be the subcomplex of $\smpure(C_n)$ generated by the even vertices: $\{0,2,4,\dots,2n-2\}$. Similarly, $O_n$ is the subcomplex generated by $\{1,3,5,\dots,2n-1\}$. Note that $E_n\subset \text{St}(0)\cup\text{St}(6)$, $O_n\subset \text{St}(1)\cup\text{St}(7)$, and $E_n\cong O_n$. 

\begin{lemma}\label{lem:endefretract} The space $\text{St}(0)\cup \text{St}(6)$ collapses to $E_n$.
\end{lemma}

\begin{proof}
    We make use of generalized discrete Morse theory (Section \ref{sec:aux}). Note that each facet of $(\text{St}(0)\cup\text{St}(6))\setminus E_n$ contains at least one odd vertex.  For each simplex $\tau\in(\text{St}(0)\cup\text{St}(6))\setminus E_n$ containing at least one even vertex, let $\alpha_\tau\subset\tau$ be the face generated by the odd vertices of $\tau$. The intervals $[\alpha_\tau,\tau]$ form a partition of $(\text{St}(0)\cup\text{St}(6))\setminus E_n$. Indeed, any simplex in this set is a face of some simplex containing at least one even vertex and therefore lies in some interval $[\alpha_\tau,\tau]$. But the definition of $\alpha_\tau$ implies that the intervals are disjoint: if some $\sigma$ were to lie in two such intervals corresponding to $\tau_1$ and $\tau_2$, then the sets of odd vertices would both be contained among the vertices of $\sigma$ and therefore in both $\tau_1$ and $\tau_2$, which forces $\tau_1=\tau_2$. Now partition $E_n$ by letting each simplex be a singleton.  Then this partition of $(\text{St}(0)\cup\text{St}(6))$ determines a generalized discrete gradient vector field. By Theorem \ref{gencollapse}, we conclude that $\text{St}(0)\cup\text{St}(6)\searrow E_n$.
\end{proof}

\begin{remark}
    Define a map $f:(\text{St}(0)\cup\text{St}(6))\setminus E_n\to\zr$ as follows. On vertices, we set $f(2i)=0$ and $f(2i+1)=2i+1$. Then for a simplex $\sigma = [i_0,\dots ,i_k]$ we set $f(\sigma)=\sum_jf(i_j)$. Then each simplex $\sigma$ in the interval $[\alpha_\tau,\tau]$ satisfies $f(\sigma)=f(\alpha_\tau)$ and for all simplices $\mu\subset \nu$ we have $f(\mu)\le f(\nu)$ with equality only if $\mu$ and $\nu$ lie in some interval $[\alpha_\tau,\tau]$. Setting $f(\sigma)=\dim\sigma - N$ for some integer $N>n-3 = \dim E_n$ makes every simplex in $E_n$ critical for $f$. Thus $f$ is a generalized discrete Morse function for the generalized gradient defined above.
\end{remark}

Similarly, $\text{St}(1)\cup\text{St}(7)\searrow O_n$.
We now compute the homotopy type of $E_n$. Since $O_n\cong E_n$, this will also determine its homotopy type.

Let $A_n=\text{St}_{E_n}(0)$ and $B_n=E_n\setminus 0$ so that $E_n=A_n\cup B_n$, and set $A_n\cap B_n=D_n$. Note that $A_n$ is contractible as a closed star, so Proposition \ref{wedgeprop} gives 
\begin{eqnarray*}
    E_n &\simeq & A_n\vee B_n\vee\Sigma(A_n\cap B_n)\\
    &\simeq & B_n\vee\Sigma(D_n)
\end{eqnarray*}
So we must compute the homotopy types of $B_n$ and $D_n$.

\begin{lemma}\label{lem:bn}
    For $n\geq 9$, $B_n\simeq\Sigma^2(B_{n-3})$.
\end{lemma}

\begin{proof}
    There are $(n-2)$ maximal simplices of $B_n$. Of dimension $(n-3)$ we have $$\sigma_n=[2,4,\dots,2n-4]~\text{and}~\tau_n=[4,6,\dots,2n-2],$$ and of dimension $(n-4)$ we have
    \begin{gather*}
        [2,8,10,\dots,2n-2], [2,4,10,12,\dots,2n-2],\\ [2,4,6,12,14,\dots,2n-2],\dots, [2,4,\dots,2n-8,2n-2]
    \end{gather*}
    This gives a total of $(n-2)$ maximal simplices. Notice that $B_n$ is a union of two contractible pieces:
        $$B_n=\text{St}_{B_n}(2n-2)\cup\sigma_n$$
Denote the intersection $\text{St}_{B_n}(2n-2)\cap \sigma_n$ by $B_n'$. Then we have
    \begin{eqnarray*}
    B'_n &= & \text{St}_{B_n}(2n-2)\cap\sigma_n \\ 
    & = & \{[4,\dots,2n-4],[2,8,\dots,2n-4],[2,4,10,\dots,2n-4],\\
    &  & \quad [2,4,6,12,\dots,2n-4],\dots,[2,4,\dots,2n-10,2n-4],[2,\dots,2n-8]\}\\
    &= &\text{St}_{B'_n}(2n-4)\cup[2,\dots,2n-8]
    \end{eqnarray*}
The intersection $B_n''$ of these two pieces is then
    \begin{eqnarray*}
    B''_n &= & \text{St}_{B'_n}(2n-4)\cap[2,\dots,2n-8] \\
     & = & \{[4,\dots,2n-8],[2,8,10,\dots,2n-8] [2,4,10,\dots,2n-8],\\
    & & \quad\dots, [2,\dots,2n-14,2n-8],[2,4,\dots,2n-10]\}\\
    \end{eqnarray*}
But now note that $\sigma_{n-3}=[2,\dots,2n-10]$ and $\tau_{n-3} = [4,\dots,2n-8]$ so that
\begin{eqnarray*}
B''_n & = & \{\sigma_{n-3},\tau_{n-3},[2,8,10,\dots,2n-8],[2,4,10,\dots,2n-8], \\
  & & \quad\dots,[2,\dots,2n-14,2n-8]\},
\end{eqnarray*}
which is exactly $B_{n-3}$. Therefore, by Proposition \ref{wedgeprop},
    \begin{eqnarray*}
    B_n& \simeq &\text{St}_{B_n}(2n-2)\vee\sigma_n\vee\Sigma^2(B_{n-3})\\
    &\simeq & \ast\vee\ast\vee\Sigma^2(B_{n-3})\\
    &\simeq & \Sigma^2(B_{n-3}).
    \end{eqnarray*}
\end{proof}

\begin{lemma}\label{lem:dn}
    For $n\geq 9$, $D_n\simeq\Sigma^2(D_{n-3})$.
\end{lemma}

\begin{proof}
We employ the same strategy as Lemma \ref{lem:bn}. There are $(n-2)$ maximal simplices of $D_n$, all of dimension $(n-2)$.
    \begin{eqnarray*}
        D_n & = & \{[2,4,\dots,2n-6],[6,8,\dots,2n-2],[2,8,10,\dots,2n-2], \\
        & & \quad [2,4,10,\dots,2n-2],\dots,[2,\dots,2n-8,2n-2]\}\\
        &= &\text{St}_{D_n}(2n-2)\cup[2,4,\dots,2n-6]
    \end{eqnarray*}
The intersection $D_n'$ of these two pieces is
    \begin{eqnarray*}
        D'_n & = &\text{St}_{D_n}(2n-2)\cap[2,4,\dots,2n-6]\\
        & = &\{[2,4,\dots,2n-8],[6,\dots,2n-6],[2,8,\dots,2n-6],\\
        & & \quad[2,4,10,\dots,2n-6],\dots,[2,\dots,2n-12,2n-6]\}\\
        &= &\text{St}_{D'_n}(2n-6)\cup[2,4,\dots,2n-8]
    \end{eqnarray*}
The intersection $D_n''$ of these two pieces is
    \begin{eqnarray*}
        D''_n &= &\text{St}_{D'_n}(2n-6)\cap[2,4,\dots,2n-8] \\
        & =& \{[6,\dots,2n-8],[2,8,\dots,2n-8],[2,4,10,\dots,2n-8],\\
        & & \quad \dots,[2,\dots,2n-14,2n-8],[2,\dots,2n-12]\}\\
        &= &D_{n-3}
    \end{eqnarray*}
Therefore, by Proposition \ref{wedgeprop},
    \begin{eqnarray*}
    D_n&\simeq &\text{St}_{D_n}(2n-2)\vee[2,\dots,2n-6]\vee\Sigma^2(D_{n-3})\\
    &\simeq & \ast\vee\ast\vee\Sigma^2(D_{n-3})\\
    &\simeq & \Sigma^2(D_{n-3})
    \end{eqnarray*}
\end{proof}

\noindent It remains to compute the three base cases to determine the homotopy type of $E_n$.

\subsection*{The $n\equiv 0\bmod 3$ case} Let $n=6,k=2$. The generating simplices of $B_6$ are $\sigma_6=[2,4,6,8],\tau_6=[4,6,8,10],[2,4,10],$ and $[2,8,10]$. Then we have $B_6= \text{St}_{B_6}(10)\cup\sigma_6,$  with intersection
    \begin{eqnarray*}
        B'_6 &= &\text{St}_{B_6}(10)\cap\sigma_6 \\
        &= & \{[4,6,8],[2,4],[2,8]\}\\
        &=  & \text{St}_{B'_6}(8)\cup [2,4]
    \end{eqnarray*}
Taking a second intersection, 
    $$B''_6=\text{St}_{B'_6}(8)\cap [2,4]=\{[2],[4]\}\simeq\zs^0$$
So we have
    \begin{eqnarray*}
    B_6 &\simeq & \text{St}_{B_6}(10)\vee\sigma_6\vee\Sigma^2(B''_6)\\
    & \simeq & \ast\vee\ast\vee\Sigma^2(\zs^0)\\
    & \simeq &\zs^2 \\
    & = &\zs^{2k-2}
    \end{eqnarray*}
With the base case proven, the inductive step holds by Lemma \ref{lem:bn}:
    \begin{eqnarray*}
        B_n &\simeq &\Sigma^2(B_{n-3})\\
        &\simeq &\Sigma^2(\zs^{2k-4})\\
        &\simeq &\zs^{2k-2}
    \end{eqnarray*}
    
Next we have $$D_6=\{[2,4,6],[6,8,10],[2,8,10],[2,4,10]\}=\text{St}_{D_6}(10)\cup[2,4,6]$$ Taking the intersection gives
$$D'_6=\text{St}_{D_6}(10)\cap[2,4,6]=\{[2,4],[6]\}\simeq\zs^0$$
By Proposition \ref{wedgeprop}, $$D_6\simeq~\text{St}_{D_6}(10)\vee[2,4,6]\vee\Sigma(\zs^0)\simeq\zs^1=\zs^{2k-3}$$

With the base case proven, the inductive step holds by Lemma \ref{lem:dn}:
    \begin{eqnarray*}
        D_n &\simeq &\Sigma^2(D_{n-3})\\
        &\simeq &\Sigma^2(\zs^{2k-5})\\
        &\simeq &\zs^{2k-3}
    \end{eqnarray*}

Putting it all together, for $n=3k$ we have 
\begin{eqnarray*}
    E_n &\simeq & A_n\vee B_n\vee\Sigma(A_n\cap B_n)\\
    &\simeq & B_n\vee\Sigma(D_n)\\
    &\simeq &\zs^{2k-2}\vee\Sigma(\zs^{2k-3})\\
    &\simeq &\zs^{2k-2}\vee\zs^{2k-2}
\end{eqnarray*}

\subsection*{The $n\equiv 1\bmod 3$ case} Let $n=7,k=2$. The generating simplices of $B_7$ are $\sigma_7=[2,4,6,8,10],\tau_7=[4,6,8,10,12],[2,8,10,12],[2,4,10,12],$ and $[2,4,6,12]$. Then we have $B_7=\text{St}_{B_7}(12)\cup\sigma_7,$  with intersection
    \begin{eqnarray*}
        B'_7 &= &\text{St}_{B_7}(12)\cap\sigma_7 \\
        & = &\{[4,6,8,10],[2,8,10],[2,4,10],[2,4,6]\}\\
        &= &\text{St}_{B'_7}(10)\cup [2,4,6]
    \end{eqnarray*}
Taking a second intersection we have 
    $$B''_7=\text{St}_{B'_7}(10)\cap [2,4,6]=\{[4,6],[2,4]\}\simeq*$$
So we have
    \begin{eqnarray*}
    B_7&\simeq &\text{St}_{B_7}(12)\vee\sigma_7\vee\Sigma^2(B''_7)\\
    & \simeq & \ast\vee\ast\vee\Sigma^2(*)\\
    & \simeq & \ast
    \end{eqnarray*}
With the base case proven, the inductive step holds by Lemma \ref{lem:bn}:
    \begin{eqnarray*}
        B_n &\simeq &\Sigma^2(B_{n-3})\\
        &\simeq & \Sigma^2(*)\\
        &\simeq & \ast
    \end{eqnarray*}

Next we have
\begin{eqnarray*}
    D_7&= &\{[2,4,6,8],[6,8,10,12],[2,8,10,12],[2,4,10,12],[2,4,6,12]\}\\
    &= &\text{St}_{D_7}(12)\cup[2,4,6,8].
\end{eqnarray*}
Taking the intersection gives
\begin{eqnarray*}
D'_7 &= &\text{St}_{D_7}(12)\cap[2,4,6,8] \\
  & = &\{[6,8],[2,8],[2,4,6]\} \\
  & = &\text{St}_{D'_7}(8)\cup[2,4,6].
\end{eqnarray*}
A second intersection yields $$D''_7=\text{St}_{D'_7}(8)\cap[2,4,6]=\{[2],[6]\}\simeq\zs^0$$
By Proposition \ref{wedgeprop}, 
\begin{eqnarray*}
D_7 & \simeq &\text{St}_{D_7}(12)\vee[2,4,6,8]\vee\Sigma^2(\zs^{0}) \\
 &\simeq &\zs^{2} \\
 &= &\zs^{2k-2}
\end{eqnarray*}

With the base case proven, the inductive step holds by Lemma \ref{lem:dn}:
    \begin{eqnarray*}
        D_n &\simeq &\Sigma^2(D_{n-3})\\
        &\simeq &\Sigma^2(\zs^{2k-4})\\
        &\simeq &\zs^{2k-2}
    \end{eqnarray*}

Putting it all together, for $n=3k+1$ we have 
\begin{eqnarray*}
    E_n &\simeq & A_n\vee B_n\vee\Sigma(A_n\cap B_n)\\
    &\simeq & B_n\vee\Sigma(D_n)\\
    &\simeq & \ast\vee\Sigma(\zs^{2k-2})\\
    &\simeq & \zs^{2k-1}
\end{eqnarray*}


\subsection*{The $n\equiv 2\bmod 3$ case} Let $n=8,k=2$. The generating simplices of $B_8$ are $\sigma_8=[2,4,6,8,10,12]$, $\tau_8=[4,6,8,10,12,14]$, and $$[2,8,10,12,14],[2,4,10,12,14],[2,4,6,12,14], [2,4,6,8,14].$$ Then we have $B_8=$~st$_{B_8}(14)\cup\sigma_8,$  with intersection
    \begin{eqnarray*}
        B'_8 &= &\text{St}_{B_8}(14)\cap\sigma_8 \\
        & = &\{[4,6,8,10,12],[2,8,10,12],[2,4,10,12],[2,4,6,12],[2,4,6,8]\}\\
        &=  &\text{St}_{B'_8}(12)\cup[2,4,6,8]
    \end{eqnarray*}
Taking a second intersection we have 
\begin{eqnarray*}
    B''_8 &= &\text{St}_{B'_8}(12)\cap[2,4,6,8] \\
    & = &\{[4,6,8],[2,4,6],[2,8]\} \\
    & \simeq &\zs^{1}
\end{eqnarray*}
So we have
    \begin{eqnarray*}
    B_8 &\simeq &\text{St}(14)\vee\sigma_8\vee\Sigma^2(B''_8)\\
    & \simeq & \ast\vee\ast\vee\Sigma^2(\zs^1)\\
    & \simeq & \zs^3 \\
    &= &\zs^{2k-1}
    \end{eqnarray*}
With the base case proven, the inductive step holds by Lemma \ref{lem:bn}:
    \begin{eqnarray*}
        B_n &\simeq &\Sigma^2(B_{n-3})\\
        &\simeq &\Sigma^2(\zs^{2k-3})\\
        &\simeq & \zs^{2k-1}
    \end{eqnarray*}

Next we have
\begin{eqnarray*}
    D_8 & =&\{[2,4,6,8,10],[6,8,10,12,14],[2,8,10,12,14],[2,4,10,12,14],\\
    & & \quad [2,4,6,12,14],[2,4,6,8,14]\}\\
    &= &\text{St}_{D_8}(14)\cup[2,4,6,8,10]
\end{eqnarray*}
Taking the intersection gives
\begin{eqnarray*}
    D'_8 & = &\text{St}_{D_8}(14)\cap[2,4,6,8,10] \\
    &= &\{[6,8,10],[2,8,10],[2,4,10],[2,4,6,8]\}\\
    &= &\text{St}_{D'_8}(10)\cup[2,4,6,8]
\end{eqnarray*}
A second intersection yields $$D''_8=\text{St}_{D'_8}(10)\cap[2,4,6,8]=\{[6,8],[2,8],[2,4]\}\simeq *$$
By Proposition \ref{wedgeprop}, $$D_8\simeq~\text{St}_{D_8}(14)\vee[2,4,6,8,10]\vee\Sigma^2(*)\simeq *$$

With the base case proven, the inductive step holds by Lemma \ref{lem:dn}:
    \begin{eqnarray*}
        D_n &\simeq & \Sigma^2(D_{n-3})\\
        &\simeq &\Sigma^2(*)\\
        &\simeq & \ast
    \end{eqnarray*}

Putting it all together, for $n=3k+2$ we have 
\begin{eqnarray*}
    E_n &\simeq & A_n\vee B_n\vee\Sigma(A_n\cap B_n)\\
    &\simeq & B_n\vee\Sigma(D_n)\\
    &\simeq & \zs^{2k-1}\vee\Sigma(*)\\
    &\simeq & \zs^{2k-1}
\end{eqnarray*}

Note that each of these three cases is quite different: for $n\equiv 0\bmod 3$, we found topology in both $B_n$ and $D_n$; for $n\equiv 1\bmod 3$, we found topology only in $D_n$; and for $n\equiv 2\bmod 3$, all the topology lives in $B_n$. In summary, we have proved the following.

\begin{lemma}\label{lem:en}
    For $k\ge 2$ we have the following homotopy types:
    $$E_n \simeq \begin{cases}
        \zs^{2k-2}\vee \zs^{2k-2} & n = 3k \\
        \zs^{2k-1} & n=3k+1,3k+2.
    \end{cases}$$
\end{lemma}

We have the same result for the space $O_n$. Finally, note that we have
\begin{eqnarray*}
    \smpure(C_n)& = & (\text{St}(0)\cup\text{St}(6))\cup (\text{St}(1)\cup\text{St}(7))\\
    &\simeq & E_n\vee O_n\vee\Sigma((\text{St}(0)\cup\text{St}(6))\cap (\text{St}(1)\cup\text{St}(7)))\\
    &\simeq & E_n\vee O_n\vee\Sigma(\zs^1)\quad (\textrm{Lemma \ref{lem:intersection}})\\
    &\simeq & E_n\vee O_n\vee\zs^2
\end{eqnarray*}
The calculation of Lemma \ref{lem:en} then finishes the argument. This completes the proof of Theorem \ref{thm:cnpure}.
\end{proof}

\begin{remark}
    Contrast this calculation with that of ${\mathfrak M}(C_n)$ \cite{Kozlov1999} and ${\mathfrak M}_{\textrm{pure}}(C_n)$ \cite{ChariJoswig2005}. These satisfy
    $${\mathfrak M}(C_n) \simeq \begin{cases}
        \zs^{2k-1}\vee\zs^{2k-1}\vee\zs^{3k-2}\vee\zs^{3k-2} & n=3k \\
        \zs^{2k}\vee\zs^{3k-1}\vee\zs^{3k-1} & n=3k+1 \\
        \zs^{2k}\vee\zs^{3k}\vee\zs^{3k} & n=3k+2
    \end{cases}$$
    and
    $${\mathfrak M}_\textrm{pure}(C_n) \simeq \zs^2\vee\zs^{n-2}\vee\zs^{n-2}.$$
    Our complexes are a sort of cross between these two types with the $\zs^2$ factor appearing for all $n$, but with the other factors depending on $n\bmod 3$. 
\end{remark}

The complex $\sm(C_n)$ is more complicated. Indeed, there is no discernible pattern to the homotopy types, in contrast with $\smpure(C_n)$. We used Polymake to compute the homology of $\sm(C_n)$ and determined the homology types displayed in Table \ref{smtable}. There are some double suspension relationships (e.g. $n=3,6$; $n=6,9$; $n=7,10$; $n=14,17$; $n=15,18$) but there is no universal pattern. The spaces for $n\equiv 0\bmod 4$ are clearly different, as one might expect given that the maximal  possible number of stuck pairs in a matching increases at these values. Polymake was unable to complete the calculation beginning at $n=22$.

\begin{table}
\begingroup 
\renewcommand{\arraystretch}{1.4}
\begin{tabular}{r|l|r|l}
\hline
$n$ & $\sm(C_n)$ & $n$ & $\sm(C_n)$ \\
\hline
1 & $\emptyset$  & 12 & $\bigvee_3\zs^5\vee\bigvee_4\zs^6$ \\
2 & $\zs^0$  & 13 & $\zs^6\vee\zs^7\vee\zs^7$ \\
3 & $\bigvee_5 \zs^0$ & 14 & $\zs^6\vee\zs^7\vee\zs^7$ \\
4 & $\zs^1\vee\zs^1\vee\zs^1$ & 15 & $\zs^6\vee\bigvee_4\zs^8$ \\
5 &  $\zs^1    $ & 16 & $\bigvee_3\zs^7\vee\bigvee_2\zs^9$ \\
6 & $\bigvee_5\zs^2$    & 17 & $\zs^8\vee\zs^9\vee\zs^9$  \\   
7 & $\zs^2\vee\zs^3\vee\zs^3$ & 18 & $\zs^8\vee\bigvee_4\zs^{10}$ \\ 
8 & $\bigvee_5\zs^3$  & 19 & $\zs^8\vee\zs^{11}\vee\zs^{11}$ \\
9 & $\bigvee_5\zs^4$ & 20 & $\bigvee_3\zs^9\vee\bigvee_2\zs^{11}$ \\
10 & $\zs^4\vee\zs^5\vee\zs^5$ & 21 & $\zs^{10}\vee\bigvee_4\zs^{12}$ \\
11 & $\zs^4\vee\zs^5\vee\zs^5$ & 22 & out of memory \\
\hline
\end{tabular}
\endgroup
\caption{\label{smtable} The first few homology types of $\sm(C_n)$.}
\end{table}

\section{The $n$-simplex and its boundary}
In this section we study $\sm(\Delta^n)$ and $\sm(\partial\Delta^n)$. These turn out to be related to complexes of directed forests, which were studied by Kozlov \cite{Kozlov1999}. 

Let $G$ be a directed graph with vertex set $V(G)$. We say that $G$ is a {\em directed tree} with root $x\in V(G)$ if for every $y\in V(G)$ there is a unique directed path from $x$ to $y$. A directed graph $G$ is called a {\em directed forest} if there is a decomposition $V(G)=\coprod_{i\in I} A_i$ such that the subgraph induced by each $A_i$ is a directed tree and there are no edges between $A_i$ and $A_j$ for $i\ne j$. 

\begin{definition}
    Let $G$ be a directed graph. Construct a simplicial complex $\Delta(G)$ as follows. The vertices of $\Delta(G)$ are the directed edges of $G$ and faces are all directed forests that are subgraphs of $G$.
\end{definition}

Now consider the complete directed graph $G_n$ on $n$ vertices. This has an edge in each direction between any pair of vertices so that there are $n(n-1)$ edges in $G_n$. 

\begin{theorem}[\cite{Kozlov1999}, Theorem 3.1]\label{kozlovwedge} The complex $\Delta(G_n)$ has the homotopy type of a wedge of $(n-1)^{n-1}$ copies of $\zs^{n-2}$.
\end{theorem}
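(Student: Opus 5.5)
The statement concerns the complex of rooted (directed) forests in the complete directed graph $G_n$, so I would prove it directly with discrete Morse theory rather than through the strong matchings. The plan has three steps. First, build an acyclic matching on the face poset of $\Delta(G_n)$, with the empty face included, whose critical cells all lie in the top dimension $n-2$. Second, count those critical cells using the reduced Euler characteristic. Third, invoke Forman's corollary from Section 2: a complex with such a matching is homotopy equivalent to a CW complex with one $0$-cell and some number of $(n-2)$-cells, hence to a wedge of $\zs^{n-2}$'s when $n\ge 3$. The case $n=2$ is checked by hand: $\Delta(G_2)$ is two points, which is $\zs^0$, and $(n-1)^{n-1}=1$.

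For the count I would use the standard formula for rooted forests on $[n]$: there are $\binom{n-1}{k}n^{k}$ of them with $k$ edges, and each is a $(k-1)$-simplex. Then
$$\tilde\chi(\Delta(G_n))=\sum_{k=0}^{n-1}(-1)^{k-1}\binom{n-1}{k}n^k=-(1-n)^{n-1}=(-1)^{n}(n-1)^{n-1}.$$
Here the $k=0$ term accounts for the empty face. Since the sign of this sum is $(-1)^{n-2}$, it counts cells of dimension $n-2$. So once the critical cells are known to lie only in dimension $n-2$ (apart from the empty face, or a single vertex if I match the empty face away), there are exactly $(n-1)^{n-1}$ of them.

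For the matching I would use the iterated element-matching construction. For $j=2,\dots,n$ in turn, pair each still-unmatched forest $F$ with $F\triangle\{1\to j\}$ whenever both are unmatched forests. Each step is a matching by toggling a single element, and a finite sequence of such steps is acyclic by the standard patchwork (cluster) lemma. At the first step the empty face is paired with $\{1\to 2\}$. A forest containing an edge $1\to j$ can always have that edge removed, so every critical forest has no edges leaving vertex $1$. Moreover, for each $j$, either $j$ already has an in-edge or adding $1\to j$ would create a cycle, meaning $j$ is an ancestor of $1$; the earlier toggles must also be taken into account here.

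The main obstacle is the next step: showing that these conditions force $F$ to be a spanning rooted tree, that is, to have $n-1$ edges. The idea is that every root of $F$ other than the root of $1$'s tree is a vertex $j$ with no in-edge that is not an ancestor of $1$, so $1\to j$ could be added. This argument has to be made compatible with the sequential order of the toggles. If it fails as stated, I would switch to ordering the toggles by $j$ along the path structure, or instead prove shellability of $\Delta(G_n)$ with a lexicographic order on the spanning rooted trees. Pure shellability of dimension $n-2$ would equally give a wedge of $\zs^{n-2}$'s, and the Euler characteristic computation above fixes the number of spheres.
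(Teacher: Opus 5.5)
Your route is sound and genuinely different from the paper, which gives no argument of its own and simply imports the result from Kozlov. As written, your proof stops at exactly the step you flag. That step is true, however, your fallbacks (reordering the toggles, shellability) are unnecessary, and it closes with one invariant.

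Write $x_j$ for the edge $1\to j$, and let $U_j$ be the set of faces (including $\emptyset$) still unmatched after toggling $x_2,\dots,x_j$. Then $U_j$ consists exactly of the forests $F$ with $x_2,\dots,x_j\notin F$ in which every $i\in\{2,\dots,j\}$ is either a non-root or the root of the component containing $1$. The induction rests on one observation: toggling $x_j$ between two forests changes the root status of $j$ and of no other vertex, and it never changes the root of the component containing $1$. Indeed, adding $1\to j$ requires $j$ to be a root outside that component, and deleting $1\to j$ detaches the subtree at $j$, which contains neither $1$ nor its root.

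Hence for $F\in U_{j-1}$, both $F-x_j$ and (whenever it is a forest) $F+x_j$ lie in $U_{j-1}$ again. So the earlier toggles never block a pairing at step $j$. The faces matched at step $j$ are exactly those containing $x_j$, together with those in which $j$ is a root outside the component of $1$; in particular $\emptyset$ is matched with $\{x_2\}$.

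Consequently a critical $F$ has no edge leaving $1$, and its only possible roots are $1$ and the root $r$ of the component of $1$. If $r=1$, then $F$ would be a single tree rooted at a childless vertex, which is impossible for $n\ge 2$. So $F$ is a spanning tree rooted at some $r\ne 1$ in which $1$ is a leaf, and conversely every such tree is critical. Deleting the leaf $1$ shows there are $(n-1)^{n-2}\cdot(n-1)=(n-1)^{n-1}$ of them. Your Euler characteristic computation therefore becomes a consistency check rather than a needed step.

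As for what each approach buys: the citation keeps the paper short. Your argument instead makes this section self-contained, using the same discrete Morse tools the paper already employs, and it identifies explicit critical cells. Combined with the paper's collapse $\sm(\Delta^n)\searrow X_n\cong\Delta(G_{n+1})$, it would even give an explicit gradient on $\sm(\Delta^n)$ with one critical vertex and $n^n$ critical $(n-1)$-cells.
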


Now consider the strong Morse complex $\sm(\Delta^n)$. Denote the vertices of $\Delta^n$ by $v_0,v_1,\dots,v_n$. Fix a labeling $\sigma$ of the vertices ($v_i\leftrightarrow \sigma(i)$) and note that for each $j$ the subcomplex $\Delta^n_j$ is a $j$-simplex and is thus a cone on any of its $(j-1)$-faces. In particular, this implies that any strong gradient is a maximal matching on $\Delta^n$, leaving only the vertex labeled with $0$ as critical. It follows that $\sm(\Delta^n)$ is pure of dimension 
$$\frac{1}{2}(2^{n+1}-2)-1=2^n-2.$$

\begin{theorem}\label{strongsimplex}
    The complex $\sm(\Delta^n)$ collapses to a subcomplex $X_n$ of dimension $n-1$ isomorphic to $\Delta(G_{n+1})$. Consequently, $\sm(\Delta^n)$ has the homotopy type of a wedge of $n^n$ copies of $\zs^{n-1}$.
\end{theorem}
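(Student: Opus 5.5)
Let $X_n$ be the subcomplex of $\sm(\Delta^n)$ spanned by the vertex--edge pairs $(v_i,[v_i,v_j])$. Such a pair corresponds to the directed edge $v_i\to v_j$ of $G_{n+1}$, or to $v_j\to v_i$; I fix one of these orientation conventions. The plan is to show that $\sm(\Delta^n)$ collapses onto $X_n$ via generalized discrete Morse theory, much as in Lemma~\ref{lem:endefretract}, and then to identify $X_n$ with $\Delta(G_{n+1})$. Theorem~\ref{kozlovwedge}, applied with $n+1$ vertices, then gives a wedge of $n^n$ copies of $\zs^{n-1}$.

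\emph{Structure of strong matchings.} For a vertex ordering, the vertex $u$ with label $j\ge1$ is dominated in $\Delta^n_j$ by a chosen earlier vertex $a(u)$. Its contribution is all pairs $\{\sigma,\sigma\cup a(u)\}$, where $\sigma$ runs over simplices containing $u$, not containing $a(u)$, and spanned by vertices of label $\le j$. The vertex-level data of this contribution is exactly the pair $(u,[u,a(u)])$. So a strong gradient is determined by a labeling together with a choice function $a$ with $a(u)$ earlier than $u$. Its restriction to vertex--edge pairs is a spanning directed tree of the complete graph on the $n+1$ vertices of $\Delta^n$, rooted at the label-$0$ vertex. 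Conversely, every rooted spanning tree arises: take any linear extension of the tree order and let $a(u)$ be the parent of $u$. Hence the facets of $X_n$ are exactly the spanning rooted trees. Since faces of $\sm$ are arbitrary subsets of strong gradients, faces of $X_n$ are the subforests of such trees, i.e., directed forests, and $X_n\cong\Delta(G_{n+1})$, which has dimension $n-1$.

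\emph{Collapse.} For a simplex $\tau\in\sm(\Delta^n)$ not in $X_n$, let $\alpha_\tau$ be the face of $\tau$ spanned by its higher pairs (pairs $\{\sigma,\sigma\cup a\}$ with $\dim\sigma\ge1$), and consider intervals $[\alpha_\tau,\tau]$. These intervals overlap, so instead I would use the intervals $[\tau\cap X_n,\ \tau]$ indexed by maximal extension. The cleanest formulation uses the pure structure: every simplex $\rho$ lies in some facet, and I want a canonical largest simplex $\widehat\rho\supseteq\rho$ with the same higher-dimensional part $\rho\setminus X_n$. The candidate is $\widehat\rho=(\rho\setminus X_n)\cup\{\text{all vertex pairs compatible}\}$. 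The key claim is that the vertex pairs compatible with a fixed nonempty set $H$ of higher pairs form a simplex, i.e.\ are jointly realizable together with $H$. Then the partition is into intervals $[\rho\cap X_n\text{-free part},\ \widehat\rho]$, namely $\{\rho:\rho\setminus X_n=H\}$, which is the interval $[H,\,H\cup F(H)]$ where $F(H)$ is the set of compatible vertex pairs. These intervals are nonsingular because $H\neq\emptyset$ and $F(H)\neq\emptyset$, since each higher pair $\{\sigma,\sigma\cup a\}$ with $u\in\sigma$ forces $(u,[u,a])$. Together with singletons on $X_n$ they give a generalized gradient, and Theorem~\ref{gencollapse} yields $\sm(\Delta^n)\searrow X_n$.

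\emph{Main obstacle.} The hard step is proving that $\{\rho:\rho\setminus X_n=H\}$ is really an interval. One must show that if $H$ is contained in a strong gradient, then the union of all vertex pairs occurring in any gradient containing $H$ is itself contained in one gradient containing $H$. I would first try to show that $H$ determines its vertex pairs: a higher pair $\{\sigma,\sigma\cup a\}$ pins down, for the last-labeled vertex $u$ of $\sigma$, both that $u$ is later than the other vertices of $\sigma$ and that $a(u)=a$. The compatible vertex pairs should then be exactly those consistent with this partial order and these forced parents. A tree containing all of them can be built greedily, which is the kind of case check I expect to be most delicate.
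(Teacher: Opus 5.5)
Your setup is correct. A strong gradient on $\Delta^n$ is a labeling plus a choice of earlier dominating vertex $a(u)$. Its vertex--edge part is a spanning rooted tree, every rooted tree occurs, and so $X_n\cong\Delta(G_{n+1})$. You are also right that the intervals $[\alpha_\tau,\tau]$ overlap; this is the step where the paper asserts, by analogy with Lemma~\ref{lem:endefretract}, that they partition $\sm(\Delta^n)\setminus X_n$.

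Your repair, however, rests on a false claim. A higher pair $\{\sigma,\sigma\cup a\}$ does not determine which vertex of $\sigma$ was labeled last. So it forces no particular vertex--edge pair, and the vertex pairs compatible with $H$ need not be jointly realizable. Already for $n=2$, take $H=\{([v_1,v_2],[v_0,v_1,v_2])\}$. A gradient contains $H$ exactly when the last vertex is $v_1$ or $v_2$ and is dominated by $v_0$. The resulting vertex--edge parts are:
\begin{itemize}
\item $\{(v_1,[v_0,v_1]),(v_2,[v_0,v_2])\}$, from the orderings $v_0v_1v_2$ and $v_0v_2v_1$;
\item $\{(v_0,[v_0,v_1]),(v_2,[v_0,v_2])\}$, from $v_1v_0v_2$;
\item $\{(v_0,[v_0,v_2]),(v_1,[v_0,v_1])\}$, from $v_2v_0v_1$.
\end{itemize}
So $F(H)$ has four elements, two of which use the edge $[v_0,v_1]$, and no vertex pair is common to all three gradients. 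The simplices $\rho$ with $\rho\setminus X_2=H$ form $H$ joined with a path on four vertices. That family has three maximal elements, so it is not an interval.

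What the collapse actually needs is weaker. Remove the open stars of the nonempty faces $H$ consisting only of higher pairs, in order of decreasing $|H|$. When $H$ is reached, its link in what remains is exactly $L_H=\{V\subseteq V(X_n): H\cup V\in\sm(\Delta^n)\}$. So the removal is a collapse if $L_H$ is collapsible, and a homotopy equivalence if $L_H$ is merely contractible. Contractibility already gives the wedge of spheres. Your claim is the special case in which $L_H$ is a full simplex.

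For a single higher pair $(\sigma,\sigma\cup x)$, contractibility can be proved. View a face $V$ of $X_n$ as a forest in which $(w,[w,y])$ makes $y$ the parent of $w$. Then $V\in L_H$ iff some $u\in\sigma$ has parent $x$ or no parent, and has subtree meeting $\sigma\cup\{x\}$ only in $u$. Call the corresponding subcomplex $L_u$. Each intersection $\bigcap_{u\in S}L_u$, for $\emptyset\neq S\subseteq\sigma$, is a cone with apex $(u,[u,x])$ for every $u\in S$. The nerve lemma then shows $L_H$ is contractible.

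For $|H|\ge 2$, the last vertices chosen for the different pairs must come from a single ordering. Proving $L_H$ contractible (or collapsible) in that case is the real content of the step. Your greedy tree-building cannot supply it, since the statement it aims at is false.

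The paper's proof does not supply it either. Besides the overlapping intervals, its claim that a tree determines a unique facet fails for $n\ge 3$. The orderings $v_0v_1v_2v_3$ and $v_0v_1v_3v_2$, with $v_1,v_2$ dominated by $v_0$ and $v_3$ by $v_1$, have the same vertex--edge pairs. Yet they pair $[v_2,v_3]$ with $[v_1,v_2,v_3]$ and with $[v_0,v_2,v_3]$, respectively.
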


\begin{proof}
    The crucial observation is the following: any strong matching on $\Delta^n$ is completely determined by the $n$ vertex-edge pairs. Indeed, the subcomplex $\Delta^n_j$ is the $j$-simplex spanned by the vertices labeled $0,1,\dots,j-1,j$. The lower link of vertex $j$ is then the simplex spanned by the vertices labeled $0,1,\dots,j-1$ and hence the vertex $v_{\sigma(j)}$ is downward dominated by any of the vertices with smaller labels. This allows for $j$ edge choices for pairing with vertex $v_{\sigma(j)}$, but that choice uniquely determines the rest of the pairs in $\Delta^n_j$. Thus, all pairings of $i$- and $(i+1)$-simplices for $i\ge 2$ are completely determined by the choice of vertex-edge pairings.

    Let $X_n$ be the subcomplex of $\sm(\Delta^n)$ generated by the simplices whose vertices correspond to the vertex-edge pairs. Since there are $n$ such pairs in any gradient, $\dim X_n=n-1$. By the observation above, each such simplex $\sigma$ determines a {\em unique} facet $\tau_\sigma$ of $\sm(\Delta^n)$, and all facets arise in this manner. 

     We again make use of generalized discrete Morse theory.  Define a partition of $\sm(\Delta^n)\setminus X_n$ as follows.  For each simplex $\tau\in\sm(\Delta^n)\setminus X_n$ having at least one vertex in $X_n$, let $\alpha_\tau\subset \tau$ be the face generated by the vertices not in $X_n$. An argument similar to that in the proof of Lemma \ref{lem:endefretract} shows that the intervals $[\alpha_\tau,\tau]$ partition $\sm(\Delta^n)\setminus X_n$ and so, by letting each simplex of $X_n$ be critical, we have a generalized discrete gradient on $\sm(\Delta^n)$. By Theorem \ref{gencollapse} we see that $\sm(\Delta^n)\searrow X_n$.

    Now observe that a maximal vertex-edge matching is precisely a directed spanning tree in the $1$-skeleton of $\Delta^n$. The collection of all of these corresponds to the collection of all directed trees in the complete directed graph $G_{n+1}$. Thus $X_n$ is isomorphic to $\Delta(G_{n+1})$ and hence by Theorem \ref{kozlovwedge} we see that $$\sm(\Delta^n)\simeq \bigvee_{n^n} \zs^{n-1}.$$
\end{proof}

\begin{remark}
    We may define a generalized discrete Morse function for the generalized gradient in the proof of Theorem \ref{strongsimplex} as follows. The $n(n+1)$ vertices of $X_n$ correspond to the pairs $ij$ for $0\le i\ne j\le n$. Put these in lexicographical order and then number them beginning at $0$. The remaining vertices of $\sm(\Delta^n)$ correspond to pairs $(\sigma^{(p)},\tau^{(p+1)})$ with $p\ge 1$. We order these as follows: each simplex $\sigma = [v_{i_0},v_{i_1},\dots,v_{i_p}]$ is determined by the string $i_0,i_1,\dots,i_p$, listed in increasing order. If $\sigma$ is paired with $\tau = [v_{j_0},v_{j_1},\dots,v_{j_{p+1}}]$, where $\{i_0,\dots,i_p\}\subset\{j_0,\dots,j_{p+1}\}$, then the vertex of $\sm(\Delta^n)$ corresponding to the pair $(\sigma,\tau)$ is labeled $i_0i_1\dots i_pj_0j_1\dots j_{p+1}$. Put these strings in lexicographical order and then number them beginning with $n(n+1)$, which corresponds to the pair $[v_0,v_1]\subset [v_0,v_1,v_2]$.  
    
    We now define $f:\sm(\Delta^n)\setminus X_n\to\zr$. For the vertices $v$ in $X_n$ set $f(v) = 0$. For the vertices in $\sm(\Delta^n)\setminus X_n$ set $f(v)=v$. Extend $f$ to all simplices in $\sm(\Delta^n)\setminus X_n$ by setting $f([v_{i_0},v_{i_1},\dots,v_{i_p}]) = \sum_j f(v_{i_j})$. Then $f$ is constant on each interval $[\alpha_\tau,\tau]$ and $f$ satisfies $f(\mu)\le f(\nu)$ whenever $\mu$ is a face of $\nu$, with equality only if $\mu$ and $\nu$ lie in a common interval. If we set $f(\sigma) = \dim\sigma - N$ for some integer $N>\dim X_n = n-1$, then each simplex in $X_n$ is critical for $f$. The corresponding discrete gradient is therefore the one described above.
\end{remark}

We now turn our attention to $\sm(\partial\Delta^n)$. The construction of a strong gradient proceeds just as in the case of $\Delta^n$ until the final stage. When considering the vertex labeled with $n$, the subcomplex $\partial\Delta^n_n$ is the entire space $\partial\Delta^n$ and the lower link of this vertex is the boundary of the opposite $(n-1)$-simplex, which is {\em not} a cone on any of its vertices.

\begin{theorem}\label{strongboundary}
    The complex $\sm(\partial\Delta^n)$ collapses to a subcomplex $Y_n$ of dimension $n-2$. The complex $Y_n$ has the homotopy type of a wedge of $f(n)$ copies of $\zs^{n-2}$, where
    \begin{equation*}
    f(n) = \frac{n^2 + 3n}{2} + \sum_{k=1}^{n-2} \binom{n+1}{k}(n-k)^{n-k}
    \end{equation*}
\end{theorem}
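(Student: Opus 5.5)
The plan is to follow the proof of Theorem \ref{strongsimplex} closely, with the change forced by the last vertex. Fix a labeling of the vertices of $\partial\Delta^n$. For $j<n$ the subcomplex $(\partial\Delta^n)_j$ is a full $j$-simplex, so vertex $j$ is descending dominated by any smaller vertex. As before, the choice of the vertex-edge pair for $v_{\sigma(j)}$ determines every higher pair in the open star of $v_{\sigma(j)}$ inside that simplex. The vertex labeled $n$ is always strong critical, since its descending link is $\partial\Delta^{n-1}$. Hence every strong gradient is determined by $n-1$ vertex-edge pairs, and these pairs form a directed spanning tree on the $n$ vertices other than the top vertex $v_{\sigma(n)}$.

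Let $Y_n$ be the subcomplex of $\sm(\partial\Delta^n)$ generated by the vertex-edge pairs that occur in strong gradients. Its facets are the directed spanning trees of the complete directed graph on $V\setminus\{w\}$, taken over all $w\in V$, where $|V|=n+1$. So $\dim Y_n=n-2$. Each face of $Y_n$ is contained in facets of $\sm(\partial\Delta^n)$ determined by its vertex-edge data. For each $\tau\notin Y_n$ that has a vertex in $Y_n$, take the interval $[\alpha_\tau,\tau]$, where $\alpha_\tau$ is spanned by the vertices of $\tau$ not in $Y_n$. The argument of Lemma \ref{lem:endefretract} applies, and Theorem \ref{gencollapse} then gives $\sm(\partial\Delta^n)\searrow Y_n$.

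One point needs checking. Unlike the case of $\Delta^n$, a partial set of vertex-edge pairs may extend to several facets with different higher pairs. In particular, the pairs in the star of the critical vertex are now missing. The interval argument, however, only uses that the intervals are disjoint and that every simplex outside $Y_n$ has its even part ($Y_n$-vertices) added in a consistent way. I will verify that every simplex of $\sm(\partial\Delta^n)$ not in $Y_n$ lies in exactly one interval $[\alpha_\tau,\tau]$ with $\tau$ maximal in the appropriate sense. The cleaner way is to pair off $Y_n$-vertices as a cone direction on the link of $\alpha$, which is contractible whenever nonempty.

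For the homotopy type, note that $Y_n=\bigcup_{w} Y^{(w)}$, where $Y^{(w)}\cong\Delta(G_n)$ is the forest complex on $V\setminus\{w\}$. We have $Y^{(w)}\cap Y^{(w')}$ equal to the forests avoiding edges at $w$ and $w'$, and more generally intersections over a set $S$ give forests on $V\setminus S$. So $Y_n$ is the complex of directed forests on $K_{n+1}$ whose underlying forest misses at least one vertex entirely as an isolated point. The natural tools are inclusion–exclusion via a nerve or Mayer–Vietoris spectral sequence, together with Kozlov's Theorem \ref{kozlovwedge}. The count $\binom{n+1}{k}(n-k)^{n-k}$ should come from the pieces $\Delta(G_{n+1-k})$ indexed by $k$-subsets removed, and the $\frac{n^2+3n}{2}$ from low-order correction terms ($k=n-1,n$ and the empty cases). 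I would first show that $Y_n$ is $(n-3)$-connected, then compute its Euler characteristic by counting faces: $i$-faces are directed forests with $i+1$ edges on $n+1$ vertices not spanning. This count is explicit via the directed Cayley formula. The main obstacle is proving that the homology is concentrated in degree $n-2$. I would try a shelling or a discrete Morse matching on $Y_n$ extending Kozlov's matching on $\Delta(G_{n+1})$, restricted to non-spanning forests. The critical cells would be the $(n-2)$-cells counted by $f(n)$ plus one vertex.
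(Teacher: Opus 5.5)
Your set-up matches the paper's. You collapse onto the subcomplex $Y_n$ spanned by vertex--edge pairs, then cover $Y_n=\bigcup_w Y^{(w)}$. The intersection over a $t$-element set of vertices is $\Delta(G_{n+1-t})$, which is a wedge of $(n-t)^{n-t}$ copies of $\zs^{n-t-1}$ for $t\le n-1$ (Theorem~\ref{kozlovwedge}) and empty for $t\ge n$. But the step that actually proves the theorem, namely that $Y_n$ is a wedge of $(n-2)$-spheres, is never carried out. The $(n-3)$-connectivity is only announced, and the ``main obstacle'' is deferred to a shelling or Morse matching that you do not produce. That obstacle is also misplaced: since $\dim Y_n=n-2$, $(n-3)$-connectivity alone already forces $Y_n$ to be a wedge of $(n-2)$-spheres (by Hurewicz and Whitehead), so connectivity is the whole content. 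The missing idea is to use the graded connectivity of the pieces you have already identified. The paper argues that for $J'\subsetneq J$ the inclusion $A_J\hookrightarrow A_{J'}$ maps a wedge of $\zs^{n-|J|-1}$'s into a wedge of higher-dimensional spheres, so it is null-homotopic, and then iterates Proposition~\ref{wedgeprop}. A clean way to make this rigorous is Bj\"orner's nerve theorem. A nonempty $t$-fold intersection is $(n-t-2)$-connected, and the nerve of the cover is the $(n-2)$-skeleton of an $n$-simplex, which is $(n-3)$-connected; hence $Y_n$ is $(n-3)$-connected.

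The count is also only guessed. Once the wedge structure is known, you do not need to enumerate non-spanning forests; inclusion--exclusion over the cover gives
\[
\chi(Y_n)=\sum_{t=1}^{n-1}(-1)^{t+1}\binom{n+1}{t}\Bigl(1+(-1)^{n-t-1}(n-t)^{n-t}\Bigr)=1+(-1)^n f(n).
\]
Here $\frac{n^2+3n}{2}=n+\binom{n+1}{2}$. The $n$ counts the spheres of the nerve, not the empty $n$-fold intersections, and $\binom{n+1}{2}$ comes from the $(n-1)$-fold intersections, each of which is two points. This is the Euler-characteristic form of the paper's spectral-sequence computation.

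On the collapse, which the paper handles only by appeal to Theorem~\ref{strongsimplex}, two of your statements need repair.
\begin{enumerate}
\item A strong gradient is \emph{not} determined by its vertex--edge pairs once $n\ge 4$. Take a $3$-face $\{r,u,w,z\}$ and the orders $r,u,w,z$ and $r,u,z,w$, with $u$ and $w$ dominated by $r$ and $z$ by $u$. Both give the pairs $(u,ur),(w,wr),(z,zu)$, but they match the edge $wz$ with $uwz$ and with $rwz$ respectively. The uniqueness used in the proof of Theorem~\ref{strongsimplex} fails for the same reason.
\item The relevant links are not cones. In $\partial\Delta^3$, the vertex--edge pairs compatible with the single higher pair $(uw,uwt)$ span the path with edges $\{(t,tw),(u,ut)\}$, $\{(u,ut),(w,wt)\}$ and $\{(w,wt),(t,tu)\}$.
\end{enumerate}
What the argument needs is this: for every nonempty face $\alpha$ consisting of higher pairs, the complex $\{\gamma\subseteq V(Y_n):\alpha\cup\gamma\in\sm(\partial\Delta^n)\}$ must be collapsible (contractibility suffices for a homotopy equivalence). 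One then deletes the stars of such $\alpha$ in order of decreasing dimension. You assert this contractibility but do not prove it.
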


\begin{proof}
    By the observation above, any strong matching on $\partial\Delta^n$ consists of a maximal matching on the $(n-1)$-simplex opposite the vertex labeled $n$. In turn, such a matching is completely determined by the vertex-edge pairs in that $(n-1)$-simplex. Let $Y_n$ be the subcomplex of $\sm(\partial\Delta^n)$ consisting of simplices whose vertices correspond to the vertex-edge pairs in the $(n+1)$ $(n-1)$-faces in $\partial\Delta^n$. As in Theorem \ref{strongsimplex}, we see that $\sm(\partial\Delta^n)\searrow Y_n$ and $\dim Y_n=n-2$.

    We claim that $Y_n$ has the homotopy type of a wedge of $(n-2)$-spheres. For $i=0,1,\dots,n$, let $A_i\subset Y_n$ be the subcomplex corresponding to gradients that do not involve vertex $v_i$. Note that each $A_i$ is isomorphic to the complex $X_{n-1}$ defined in the proof of Theorem \ref{strongsimplex}, and is therefore a wedge of $(n-2)$-spheres. Given a subset $J\subset\{0,1,\dots,n\}$, observe that
    $$A_J = \bigcap_{j\in J}A_j \simeq \sm(\Delta^{n-|J|}),$$
    so that each $A_J\simeq \bigvee \zs^{n-|J|-1}$. For $|J|=n,n+1$, we get $A_J=\emptyset$ since $\sm(\Delta^0)=\emptyset$. 
    
    Note that $Y_n=A_0\cup A_1\cup\dots A_n$. For any subsets $J'\subset J\subset \{0,1,\dots,n\}$ we see that the inclusion $A_{J}\hookrightarrow A_{J'}$ is a map $\bigvee \zs^{n-|J|-1} \to \bigvee \zs^{n-|J'|-1}$ and is therefore null-homotopic. Iterating Proposition \ref{wedgeprop}, we conclude that
    \begin{eqnarray*}
        Y_n & = & A_0\cup A_1\cup\dots\cup A_n \\
         & \simeq & \bigvee_{i=0}^n A_i \vee \bigvee_{i,j}\Sigma(A_i\cap A_j) \vee \bigvee_{i,j,k}\Sigma^2(A_i\cap A_j\cap A_k) \vee \cdots \\
         &  &  \vee \bigvee_{|J|=k+1}\Sigma^k(A_J)\vee \cdots \vee \bigvee_{|J|=n-1} \Sigma^{n-2}(\bigcup A_J)
    \end{eqnarray*}
    The last term consists of wedges of $(n-2)$-fold suspensions of unions of various $A_J$ for $|J|=n-1$; these are finite collections of points and so the iterated suspension is a wedge of spheres of dimension $n-2$. All the other terms are also wedges of $(n-2)$-spheres and so we conclude that $Y_n$ is as well.

    It remains to show that there are $f(n)$ spheres in the wedge. For this, we turn to the Mayer-Vietoris spectral sequence for computing the cohomology of $Y_n$. This sequence has
    $$E_1^{pq} = \bigoplus_{|J|=p+1} H^q(A_J)\Longrightarrow H^{p+q}(Y_n).$$ Since each $A_J$ has the homotopy type of a wedge of spheres, this sequence is concentrated in the row $q=0$ and the diagonal $p+q=n-2$. As a result $E_2=E_\infty$ and the only nontrivial differentials are $d_1:E_1^{p,0}\to E_1^{p+1,0}$. The $q=0$ row has the following form
    $$0\to \zz^{\binom{n+1}{1}} \to \zz^{\binom{n+1}{2}}\to \cdots \to \zz^{\binom{n+1}{n-2}}\to (\zz^2)^{\binom{n+1}{n-1}}\to 0$$
    The $\zz^2$ in the last term arises from the fact that $\sm(\Delta^1)\simeq \zs^0$ and so has two components (this term is a sum of copies of $H^0(\zs^0)$). We know that the $E_2$-terms must satisfy
    $$E_2^{00}\cong\zz;\quad E_2^{p0}=0, 0<p<n-2;\quad E_2^{n-2,0}=\zz^x$$ and it remains to determine $x$. 

    Recall that the alternating sum of the binomial coefficients for a fixed $n$ is zero:
    $$0=\binom{n+1}{0} - \binom{n+1}{1} +\binom{n+1}{2}-\cdots + (-1)^n\binom{n+1}{n} + (-1)^{n+1}\binom{n+1}{n+1}$$
    Rearranging terms we find that
    $$\binom{n+1}{1}-\binom{n+1}{2}+\cdots+(-1)^{n-1}\binom{n+1}{n-2} +(-1)^n\binom{n+1}{n-1} $$
    $${}=\binom{n+1}{0} + (-1)^n\binom{n+1}{n} + (-1)^{n+1}\binom{n+1}{n+1}$$
    $${}=1+(-1)^n(n+1) +(-1)^{n+1}$$
    Now, the Euler characteristic $\chi$ of the complex $E_1^{p0}$ is given by the alternating sum of the ranks of the various groups. This is almost the top line above; all that is missing is an additional $(-1)^n\binom{n+1}{n-1}$. Adding this to both ends of this chain of equalities yields
    $$\chi = 1+(-1)^n(n+1) +(-1)^n\binom{n+1}{n-1} + (-1)^{n+1}.$$
    On the other hand, we know that $\chi = 1+(-1)^nx$. Thus we find
        $$(-1)^nx  =  (-1)^n(n+1) + (-1)^n\binom{n+1}{n-1} + (-1)^{n+1}$$ and so 
    \begin{eqnarray*}
        x & = & (n+1) + \binom{n+1}{n-1} - 1 \\
           & = & n + \frac{n(n+1)}{2} \\
           & = & \frac{n^2 + 3n}{2}
    \end{eqnarray*}

    Finally, note that for $0\le p <n-2$, the group $E_1^{pq}$, $p+q=n-2$ has rank $$\binom{n+1}{p+1}(n-p-1)^{n-p-1},$$ yielding the formula for $f(n)$.
    
\end{proof} 

\begin{remark}
    A table of the first few values of $f(n)$ appears in Table \ref{fntable}. We used Polymake to compute the homology of $Y_n$ directly for $n=3,4,5,6,7$; the results agreed with the values in the table. The calculation for $n=7$ took approximately 13 hours on a MacBook Pro with an Apple M2 Pro chip and 16GB of RAM.
\end{remark}

\begin{table}
\begingroup
\renewcommand{\arraystretch}{1.4}
\begin{tabular}{r|r|r|r}
\hline
$n$ & $f(n)$ & $n$ & $f(n)$ \\
\hline
3 & 25               & 8  & 9{,}390{,}041         \\
4 & 189              & 9  & 211{,}157{,}281        \\
5 & 2{,}041          & 10 & 5{,}337{,}224{,}991    \\
6 & 28{,}363         & 11 & 149{,}708{,}449{,}705  \\
7 & 477{,}233        & 12 & 4{,}612{,}995{,}361{,}285 \\
\hline
\end{tabular}
\endgroup
\caption{\label{fntable} The first few values of $f(n)$.}
\end{table}

\section{Miscellaneous results}\label{sec:misc} Complexes of discrete Morse functions are not well-understood in general. Once a simplicial complex $K$ has more than a few simplices, the dimension of ${\mathfrak M}(K)$ becomes rather large. For example, $\dim {\mathfrak M}(\Delta^n) = 2^n-2$, and its homotopy type is known only for $n\le 3$ \cite{Scoville2026}. Our complexes are smaller, but as the preceding calculations show, their homotopy types are still a challenge to compute.

Some connectivity results are known. For example, there is the following theorem.

\begin{theorem}[\cite{ScovilleZaremsky2022}, Theorem 2.7]
    If $K$ has a vertex of degree $d$ in $K^{(1)}$, then ${\mathfrak M}(K)$ is $(d-2)$-connected.
\end{theorem}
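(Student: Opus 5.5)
The plan is to prove $(d-2)$-connectivity of ${\mathfrak M}(K)$ directly, by coning off spheres after removing ``bad'' simplices one at a time in the style of Bestvina--Brady. Let $v$ be the vertex of degree $d$, with incident edges $e_i=[v,w_i]$, $1\le i\le d$. The Hasse-diagram edges of $K$ naturally involved are
\begin{itemize}
\item $x_i=(v,e_i)$, which pairs $v$ upward with $e_i$;
\item $y_i=(w_i,e_i)$;
\item $(e_i,\tau)$ for the triangles $\tau\supset e_i$.
\end{itemize}
Two facts about Morse matchings drive everything. First, a matching contains at most one $x_i$. Second, in the $0$/$1$-dimensional part of the modified Hasse diagram, a gradient path can enter the vertex $v$ only through some $y_i$. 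Consequently, if a matching ${\mathcal M}$ leaves $v$ unmatched, contains no $y_i$, and does not use the edge $e_j$ in any pair, then ${\mathcal M}\cup\{x_j\}$ is again acyclic. Indeed, a cycle through the new reversed arrow $v\to e_j$ would have to return to $v$, and that requires some $y_i$. I would verify this lemma first, together with the analogous statement for adding $y_j$ when $v$ and $e_j$ are both free.

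Next I would take a simplicial map $f\colon \zs^k\to{\mathfrak M}(K)$ with $k\le d-2$, using simplicial approximation on some triangulation of $\zs^k$. Every simplex in the image is a matching with at most $k+1\le d-1$ pairs. Call a vertex ${\mathcal M}$ of the image \emph{bad} with respect to the $e_i$ according to how many of the edges $e_1,\dots,e_d$ it uses, and whether it matches $v$ or contains some $y_i$.

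The goal is to homotope $f$ into the closed star of a single cone point. My first choice is a vertex $x_j$, which works whenever all image simplices leave $v$ free, contain no $y_i$, and avoid $e_j$. To reach this situation, I would order the Hasse-edges incident to $v$ or to the $e_i$ by a height function, for instance putting the $y_i$ and the pairs $(e_i,\tau)$ high and the $x_i$ lowest. Then I would check the descending links. For a high vertex $p$, the descending link consists of lower matchings compatible with $p$. Since any simplex of it has at most $d-1$ pairs, one edge $e_j$ remains untouched, and the acyclicity lemma lets us cone that descending link off with $x_j$ or $y_j$. That gives descending links which are $(d-3)$-connected, so removing the high vertices does not change $\pi_{\le d-2}$. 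The resulting low subcomplex is a cone over some $x_j$ on each piece, and the pieces are glued along subcomplexes that are themselves cones, so the low subcomplex is $(d-2)$-connected. Hence ${\mathfrak M}(K)$ is $(d-2)$-connected.

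The main obstacle is the descending-link step: showing that the descending link of each removed vertex is genuinely $(d-3)$-connected rather than merely nonempty. The danger is that the pairs $(e_i,\tau)$ and $(w_i,\cdot)$ can create gradient paths through triangles, and these could make adding $x_j$ or $y_j$ cyclic. I would first try to handle this by inducting on the number of pairs touching the $e_i$. In that induction, the descending link should decompose as a join of a complex on the ``free'' indices $j$, whose vertices are the $x_j$ and $y_j$ with pairwise constraints, with a contractible or suitably connected remainder. Join connectivity estimates, together with the count that at least $d-(\text{number of used }e_i)$ indices are free, should give exactly the $(d-3)$ bound.
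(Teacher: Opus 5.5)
The paper gives no proof of this statement; it only quotes it from \cite{ScovilleZaremsky2022}. Your argument therefore has to stand on its own, and as written it has a genuine gap.

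Your acyclicity lemma for adding $x_j$ is correct. (For adding $y_j$ you also need $w_j$ unmatched.) The gap is in the step that carries all the weight: that the sublevel complex is $(d-2)$-connected and every descending link is $(d-3)$-connected. You justify this by coning, and the coning does not apply.

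\begin{itemize}
\item A descending link is a whole subcomplex, and its simplices can contain far more than $d-1$ pairs. The bound $k+1\le d-1$ holds only for individual simplices in the image of a $k$-sphere. Even there, different simplices can use different $e_j$, so no single $x_j$ or $y_j$ is compatible with everything you want to cone off.
\item The $x_j$ are pairwise non-adjacent in $\mathfrak{M}(K)$, since they all match $v$. Let $G$ be the complex of matchings avoiding $v$ and all $e_i$. Gluing the cones $x_j\ast G$ along $G$ gives $G\ast\{x_1,\dots,x_d\}\simeq\bigvee_{d-1}\Sigma G$. The $d$ cone points buy one suspension and nothing more, and $G$ has no reason to be $(d-3)$-connected.
\item Separately, a union of three or more cones with conical pairwise intersections need not be contractible. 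You would need all multiple intersections, as in a nerve argument.
\end{itemize}

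The star graph $S_d$ shows that your proposed heights fail outright. Its only Hasse edges are the $x_i$ and $y_i$. With the $x_i$ lowest, the sublevel complex is the discrete set $\{x_1,\dots,x_d\}$. The descending link of the first $y_i$ added, say $y_1$, is the discrete set $\{x_2,\dots,x_d\}$. For $d\ge 3$ neither is even connected, yet $\mathfrak{M}(S_d)$ is contractible.

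For $S_d$ the opposite order works: the $y_i$ span a simplex, and the descending link of $x_j$ is the simplex on $\{y_i\}_{i\ne j}$. For general $K$, though, that order leaves you two things to prove:
\begin{itemize}
\item the complex of matchings leaving $v$ unmatched is $(d-2)$-connected;
\item each $\mathrm{lk}(x_j)$ is $(d-3)$-connected.
\end{itemize}
That link is cut out by a global condition: there must be no gradient path from $w_j$ back to $v$. This condition is not detected pairwise. If $w_1w_2$ is an edge, then $y_1$, $(w_2,[w_1,w_2])$ and $x_2$ are pairwise adjacent in $\mathfrak{M}(K)$ but do not span a simplex, because of the cycle $v\to e_2\to w_2\to[w_1,w_2]\to w_1\to e_1\to v$.

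Your last paragraph correctly identifies this as the main obstacle. But the join decomposition and induction it proposes are not carried out, and that is exactly where the content of the theorem lies.
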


This result can be made sharper for graphs (\cite{ScovilleZaremsky2022}, Theorem 4.3). As the next result shows, however, there is no hope for a similar result for the complexes $\sm(K)$.

Denote the complete undirected graph on $n$ vertices by $K_n$. This has $n(n-1)/2$ undirected edges. Observe that ${\mathfrak M}(K_n)$ is the complex $\Delta(G_n)$ (Theorem \ref{kozlovwedge}), which has the homotopy type of a wedge of $(n-2)$-spheres. The complex $\sm(K_n)$ has a much different structure, however.

\begin{proposition}\label{completestrong}
    The complex $\sm(K_n)$ is equal to the $0$-skeleton of $\Delta(G_n)$.
\end{proposition}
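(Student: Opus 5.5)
The plan is to show directly that every nonempty strong Morse matching on $K_n$ consists of exactly one vertex-edge pair, and that every vertex-edge pair arises as such a matching. Since $K_n$ is a $1$-dimensional simplicial complex (the complete graph, with no $2$-simplices), the vertices of ${\mathfrak M}(K_n)=\Delta(G_n)$ are the $n(n-1)$ pairs $(v,[v,w])$, equivalently the directed edges of $G_n$. Once both facts are established, $\sm(K_n)$, being the downward closure of the strong matchings (Definition \ref{smdef}), has these $n(n-1)$ vertices and no higher simplices, which is exactly the $0$-skeleton of $\Delta(G_n)$.

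First I will run the construction of Section \ref{sec:strong} on an arbitrary vertex filtration $g$, with the vertices ordered $u_0,u_1,\dots,u_{n-1}$. The vertex $u_0$ is critical. The subcomplex $(K_n)_{g(u_1)}$ is the edge $[u_0,u_1]$, so $\mathrm{lk}^\downarrow(u_1)=\{u_0\}$, which is a cone with apex $u_0$. Hence $u_1$ is descending dominated by $u_0$, and the pair $(u_1,[u_0,u_1])$ is added to the matching.

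For $j\ge 2$, the subcomplex $(K_n)_{g(u_j)}$ is the complete graph on $u_0,\dots,u_j$. Because $K_n$ has no $2$-simplices, the descending link of $u_j$ is the discrete set $\{u_0,\dots,u_{j-1}\}$. This set has at least two points and no edges, so it is not a simplicial cone (a cone $a*K_0$ with $K_0$ nonempty contains an edge through $a$). Thus every $u_j$ with $j\ge 2$ is strong critical and contributes no pairs. It follows that every strong matching on $K_n$ is a single pair $\{(u_1,[u_0,u_1])\}$.

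Conversely, given any ordered pair $(v,w)$ of distinct vertices, I choose a filtration with $g(w)<g(v)$ smaller than all other values. This realizes the pair $(v,[v,w])$ as a strong matching, so every vertex of $\Delta(G_n)$ lies in $\sm(K_n)$.

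I expect no real obstacle. The one point needing care is the observation that a discrete set of at least two points is not a cone, which uses $\dim K_n=1$. The case $n=2$, where $K_2=P_1$ and $\sm(P_1)=\zs^0$, is consistent with the statement.
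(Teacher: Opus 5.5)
Your proof is correct and follows the paper's argument essentially step for step. Both realize each vertex--edge pair $(v,[v,w])$ by giving $w$ the smallest filtration value and $v$ the next smallest. Both then observe that every later vertex has a discrete descending link of at least two points, which is not a cone, so no strong matching on $K_n$ contains more than one pair. (Your explicit check that such a discrete set is not a cone is a detail the paper leaves implicit.)
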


\begin{proof}
    We may assign the value $0$ to any vertex $v_j\in K_n$. Labeling any other vertex $v_i$ with $1$ will create a downward dominated vertex, and so we get the vertex $ij\in \sm(K_n)$ for all $n(n-1)$ pairs $1\le i\ne j\le n$. Thus, the vertices of $\sm(K_n)$ form the entire $0$-skeleton of $\Delta(G_n)$. But now, when we label a third vertex $v_k$ with $2$, the lower link will consist of $v_i$ and $v_j$ (but not the edge joining them). This is not a cone. Inductively, when we assign the label $\ell>2$ to any vertex, the lower link will consist of the previous $\ell$ vertices, and is therefore not a cone. Thus, the only strong Morse matchings on $K_n$ are the single vertex-edge pairs and hence $\dim\sm(K_n)=0$. 
\end{proof}

So, even though the degree of a vertex in $K_n$ grows linearly with $n$ (and hence so does the connectivity of ${\mathfrak M}(K_n)$), $\sm(K_n)$ is not even connected for any $n\ge 2$.

We also have the following result. Compare with \cite{DonovonLinScoville2023}, Proposition 4.2.

\begin{proposition}\label{disjointunion}
    Let $K$ and $L$ be simplicial complexes with at least one edge. Then $\sm(K\sqcup L) = \sm(K)\ast\sm(L)$.
\end{proposition}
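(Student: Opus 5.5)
The plan is to describe the strong Morse matchings on $K\sqcup L$ directly and show that they are exactly the unions ${\mathcal M}_K\cup{\mathcal M}_L$ of a strong matching on $K$ and one on $L$, where either piece may be empty. Both complexes have the same vertex set: every Hasse-diagram edge of $K\sqcup L$ lies in $K$ or in $L$. Since $\sm(K\sqcup L)$ is the downward closure of the strong matchings, the identification gives the claim. A simplex of the join $\sm(K)\ast\sm(L)$ is $\alpha\sqcup\beta$ with $\alpha\in\sm(K)\cup\{\emptyset\}$ and $\beta\in\sm(L)\cup\{\emptyset\}$, not both empty. Such a simplex lies under ${\mathcal M}_K\cup{\mathcal M}_L$ exactly when $\alpha\subseteq{\mathcal M}_K$ and $\beta\subseteq{\mathcal M}_L$.

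\textbf{Step 1 (restriction).} Let $g$ be a vertex filtration on $K\sqcup L$ and $v\in V(K)$. Then $(K\sqcup L)_{g(v)}=K_{g|_K(v)}\sqcup L_{<}$, and the link of $v$ involves only the $K$-component. So $\mathrm{lk}^\downarrow(v,K\sqcup L)=\mathrm{lk}^\downarrow(v,K)$ computed with the restricted filtration $g|_K$. This descending link is a cone with apex $a$ exactly when it is one in $K$, and any dominating vertex $a$ lies in $K$. The descending open star of $v$ lies in $K$ too, so the pairs added at step $v$, namely $\{\sigma,\sigma\cup a\}$, are the same in both settings. The same holds for $L$. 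Hence every strong matching arising from $g$ equals ${\mathcal M}_K\cup{\mathcal M}_L$, where ${\mathcal M}_K$ is a strong matching for $g|_K$ and ${\mathcal M}_L$ one for $g|_L$, with dominator choices made independently.

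\textbf{Step 2 (realization).} Conversely, take strong matchings ${\mathcal M}_K$ from $g_K$ and ${\mathcal M}_L$ from $g_L$, possibly empty. Interleave them into a single injective filtration $g$, for instance by putting all of $K$ before all of $L$, and make the same dominator choices. By Step 1 this produces ${\mathcal M}_K\cup{\mathcal M}_L$, since the descending links and stars do not depend on how the two orders are interleaved. Steps 1 and 2 together give
\[
\{\text{strong matchings on } K\sqcup L\}=\{{\mathcal M}_K\cup{\mathcal M}_L\}.
\]
Taking downward closures and discarding the empty set then gives exactly the simplices of the join.

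\textbf{Main obstacle: the empty pieces.} I expect the main subtlety to be the role of the empty matchings and of the hypothesis that $K$ and $L$ each have an edge. The join $\sm(K)\ast\sm(L)$ contains $\sm(K)$ and $\sm(L)$ as subcomplexes. So I must check that every simplex $\alpha\in\sm(K)$ appears on its own in $\sm(K\sqcup L)$. It is a face of some ${\mathcal M}_K$, and ${\mathcal M}_K={\mathcal M}_K\cup\emptyset$ is realized by Step 2, so this case causes no problem. The real issue is nonemptiness: I need $\sm(K)\ne\emptyset$, which requires at least one nonempty strong matching on $K$. Here I use that $K$ has an edge $[u,w]$. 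Put $u$ first and $w$ second; then $\mathrm{lk}^\downarrow(w)=\{u\}$ is a cone, so the resulting matching contains $(w,[u,w])$. Without this hypothesis, for example when $L$ is a single point, $\sm(L)=\emptyset$ and the join degenerates, so the statement would have to be read with that convention. With both pieces nonempty the identification of simplices is exact and the equality follows.
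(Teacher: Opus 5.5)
Your argument is correct and is essentially the paper's proof. An ordering of $V(K\sqcup L)$ restricts to orderings of $V(K)$ and $V(L)$ whose strong matchings union to the original one, because descending links, descending stars and dominating vertices all live in a single component; conversely, concatenating orderings realizes any $\mathcal{M}_K\cup\mathcal{M}_L$. Your version is just more careful than the paper's about independent dominator choices.

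One small slip in your last paragraph: $\mathcal{M}_K\cup\emptyset$ need not be a strong matching on $K\sqcup L$ (for $L=P_1$ every ordering pairs the second vertex with the edge). You do not need it, though, since any $\alpha\in\sm(K)$ is already a face of $\mathcal{M}_K\cup\mathcal{M}_L$ for an arbitrary strong matching $\mathcal{M}_L$ on $L$.
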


\begin{proof}
    The condition that $K$ and $L$ have at least one edge simply guarantees that the complexes $\sm(K)$ and $\sm(L)$ are nonempty. Given an ordering $\sigma$ of the vertices of $K$ and an ordering $\tau$ of the vertices of $L$, denote the corresponding strong discrete gradients by $V_\sigma\in\sm(K)$ and $V_\tau\in\sm(L)$. Then $V_\sigma\cup V_\tau$ is a strong gradient on $K\sqcup L$ induced by the ordering $(\sigma,\tau)$ on the vertices of $K\sqcup L$ and hence $\sm(K)\ast\sm(L)\subseteq\sm(K\sqcup L)$.

    Conversely, let $\alpha$ be an ordering of the vertices of $K\sqcup L$. Since $K$ and $L$ are disjoint, the construction of a strong discrete gradient on $K\sqcup L$ requires consideration of lower links that lie entirely in either $K$ or $L$. That is, $\alpha$ induces orders $\sigma$ and $\tau$ on $V(K)$ and $V(L)$, respectively and the strong gradient $V_\alpha$ is the union of $V_\sigma\in\sm(K)$ and $V_\tau\in\sm(L)$. Thus, $\sm(K\sqcup L)\subseteq \sm(K)\ast\sm(L)$.
\end{proof}

In particular, note that $\sm(K\sqcup P_1)=\Sigma(\sm(K))$.

Another result from \cite{DonovonLinScoville2023} is the following. Recall that a {\em leaf} in a simplicial complex is an edge $\{u,uv\}$ with $\deg(u)=1$. That is, the edge $uv$ is attached to $K\setminus \{u\}$ at the vertex $v$. 

\begin{lemma}[\cite{DonovonLinScoville2023},Lemma 3.8]\label{wedgelemma}
    Let $K$ be a simplicial complex with leaf $\{a,ab\}$ and $c$ a neighbor of $b$ not equal to $a$. Then the vertex $\{b,bc\}\in{\mathfrak M}(K)$ is dominated by the vertex $\{a,ab\}$.
\end{lemma}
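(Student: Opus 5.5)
We must show that in $\mathfrak{M}(K)$, the link of the vertex $\{b,bc\}$ is a cone with apex $\{a,ab\}$. Concretely, the plan is to show that whenever a Morse matching $M$ contains $\{b,bc\}$, the set $M\cup\{\{a,ab\}\}$ is again a Morse matching. Equivalently, every simplex of $\mathrm{lk}(\{b,bc\})$ joins with $\{a,ab\}$ inside the link. We also need $\{a,ab\}\ne\{b,bc\}$ and that $\{a,ab\}$ lies in the link; both are immediate, since $c\neq a$ and since $\{\{b,bc\},\{a,ab\}\}$ is a Morse matching, as the following argument shows.

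\emph{Step 1: the new set is still a matching.} Take a Morse matching $M$ with $\{b,bc\}\in M$, and remove $\{a,ab\}$ if it is already present (the argument then just returns $M$). We check that the only Hasse diagram nodes that $\{a,ab\}$ touches, namely $a$ and $ab$, are unused by $M$.
\begin{itemize}
\item[(i)] Since $\deg(a)=1$, the vertex $a$ lies only in the edge $ab$. Moreover $ab$ lies in no higher simplex: any $2$-simplex containing $ab$ would give $a$ a second neighbor.
\item[(ii)] Hence $a$ can be matched only with $ab$, and $ab$ can be matched only with $a$ or $b$.
\item[(iii)] The vertex $b$ is already matched, with $bc$, so $ab$ cannot be paired with $b$ in $M$. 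Therefore both $a$ and $ab$ are free, and $M\cup\{\{a,ab\}\}$ is a matching.
\end{itemize}

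\emph{Step 2: acyclicity.} In the modified Hasse diagram, a directed cycle through the new reversed arrow $a\to ab$ would have to leave $ab$ along a downward arrow. The downward arrows from $ab$ go to $a$ and $b$, and the cycle cannot return to $a$ immediately. So it must continue through $b$ and eventually return to $a$. However, $a$ has no incoming arrows other than $ab\to a$, which has been reversed. Indeed, the only arrows at $a$ are the Hasse edge with $ab$, because $a$ is a vertex with no faces below it apart from the empty set, which is not counted. Thus $a$ has out-degree $1$ and in-degree $0$, so no cycle passes through $a$. Cycles avoiding the new arrow were already present in $M$, which is acyclic. Hence $M\cup\{\{a,ab\}\}$ is acyclic.

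\emph{Conclusion.} Every face of the link extends by $\{a,ab\}$, so $\mathrm{lk}(\{b,bc\})=\{a,ab\}*L_0$, and $\{b,bc\}$ is dominated by $\{a,ab\}$.

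The only subtle point is Step 1(iii), which verifies that $ab$ cannot already be paired with $b$. This is exactly where the hypothesis $\{b,bc\}\in M$ with $c\neq a$ is used. The remaining steps are routine.
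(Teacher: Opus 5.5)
Your proof is correct. Because $ab$ is a maximal face and $b$ is already matched to $bc\neq ab$, both $a$ and $ab$ are free in any Morse matching containing $\{b,bc\}$. Adjoining $\{a,ab\}$ then makes $a$ a source in the modified Hasse diagram, so no cycle is created, and hence every facet of ${\mathfrak M}(K)$ containing $\{b,bc\}$ contains $\{a,ab\}$, which is exactly domination.

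The paper gives no proof of this lemma; it only quotes it from Donovan--Lin--Scoville. Your argument is the natural direct proof of it.
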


The authors then use this to prove that $${\mathfrak M}(C_n\vee\ell) \sco {\mathfrak M}(P_{n-1}\sqcup\ell)$$ for a leaf $\ell$. The latter space has the homotopy type of the suspension of ${\mathfrak M}(P_{n-1})$. The corresponding result is {\em not} true for $\sm(K)$, however, as the following example shows.

\begin{example}
    Consider the space $C_3\vee \ell$ shown on the left in Figure \ref{fig:cyclewedge}. The complex $\sm(C_3\vee\ell)$ is shown on the right. Note that the vertices of ${\mathfrak M}(C_3\vee\ell)$ and $\sm(C_3\vee\ell)$ are the same; they correspond to the $8$ possible vertex-edge pairings and are numbered $0-7$ corresponding to the lexicographical ordering on the pairs. According to Lemma \ref{wedgelemma}, vertex $7$, which corresponds to the leaf pairing $\{v_3,v_3v_2\}$, dominates vertex $1$, which corresponds to the pair $\{v_0,v_0v_2\}$, in ${\mathfrak M}(C_3\vee\ell)$. However, this is obviously not the case in $\sm(C_3\vee\ell)$. Moreover, while ${\mathfrak M}(C_3\vee\ell)\simeq \ast$, we clearly have $\sm(C_3\vee\ell)\simeq \zs^1$, and this is not the same as $\sm(P_2\sqcup\ell)=\Sigma\sm(P_2)\simeq\ast$.
\end{example}

\begin{figure}
   
\centering
\begin{tikzpicture}[scale=1]
  \tikzset{
    vertex/.style={circle, fill=black, draw=black, inner sep=0pt, minimum size=5pt},
    edge/.style={thick, black}
  }

  
    \node[vertex, label=left:\(v_0\)] (v0_g2) at (0, 1.732) {};
    \node[vertex, label=right:\(v_1\)] (v1_g2) at (2.0, 1.732) {};
    \node[vertex, label=right:\(v_2\)] (v2_g2) at (1.0, 0) {};
    
    \node[vertex, label=right:\(v_3\)] (v3_g2) at (1.0, -1.5) {};

    \draw[edge] (v0_g2) -- (v1_g2);
    \draw[edge] (v1_g2) -- (v2_g2);
    \draw[edge] (v2_g2) -- (v0_g2);
    \draw[edge] (v2_g2) -- (v3_g2);

\begin{scope}[xshift=6.5cm, yshift=0.0cm]
  \node[vertex, label={[above left, xshift=5pt, yshift=1pt]:7}] (v7) at (0,0) {};
  
  \node[vertex, label=above:3] (v3) at (120:1.8) {};
  \node[vertex, label=above right:4] (v4) at (60:1.8) {};
  \node[vertex, label=right:5] (v5) at (0:1.8) {};
  \node[vertex, label=below right:2] (v2) at (300:1.8) {};
  \node[vertex, label=below:0] (v0) at (270:1.8) {};
  \node[vertex, label=below:1] (v1) at (240:1.8) {};
  
  \node[vertex, label=left:6] (v6) at (-1.8, 0) {};

  \draw[edge] (v7) -- (v3);
  \draw[edge] (v7) -- (v1);
  \draw[edge] (v3) -- (v6);
  \draw[edge] (v1) -- (v6);
  \draw[edge] (v7) -- (v4);
  \draw[edge] (v7) -- (v5);
  \draw[edge] (v7) -- (v2);
  \draw[edge] (v7) -- (v0);
 \end{scope}
 
\end{tikzpicture}

    \caption{\label{fig:cyclewedge} The space $C_3\vee\ell$ (left), and the complex $\sm(C_3\vee\ell)$ (right).}
\end{figure}

\subsection*{Some related complexes} Recall that the complex $\sm(P_n)$ has $2n$ vertices $0,1,2,\dots ,2n-1$, and the simplices are characterized by having no pair $\{i,i+1\}$ for all $i$ and $\{j,j+3\}$ for all odd $j$. Let $X_m$ be the complex with vertices $0,1,2,\dots ,m-1$ subject to these same restrictions: a simplex cannot contain a pair $\{i,i+1\}$ or a pair $\{j,j+3\}$ for $j$ odd. Note that $X_{2m} = \sm(P_m)$. We have the following result.

\begin{theorem}\label{xmthm}
    The complexes $X_m$ have the following homotopy types ($k\ge 0)$:
    $$X_m\simeq \begin{cases}
        \ast & m = 8k+1, 8k+4, 8k+5, 8k+6 \\
        \zs^{2k} & m=8k+2, 8k+3 \\
        \zs^{2k+1} & m=8k+7,8k+8.
    \end{cases}$$
\end{theorem}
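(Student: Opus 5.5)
The plan is to view $X_m$ as the independence complex of the graph $\Gamma_m$ on the vertex set $\{0,1,\dots,m-1\}$, with edges $\{i,i+1\}$ and $\{j,j+3\}$ for $j$ odd. Simplices of $X_m$ are exactly the independent sets of $\Gamma_m$. I will prove a periodicity
$$X_m\simeq\Sigma^2X_{m-8}\qquad(m\ge 9),$$
compute the base cases $m\le 8$ by hand, and induct; this matches the shape of the claimed answer. For $m$ even this recovers Theorem~\ref{pathhtyptype} via $X_{2m}=\sm(P_m)$, which is a useful consistency check.

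The tool is the standard fold lemma for independence complexes (Engström), which I take as known in the same spirit as Proposition~\ref{wedgeprop}. If $N(u)\subseteq N(w)$ for distinct vertices $u,w$, then deleting $w$ does not change the homotopy type of the independence complex. I also use that an isolated edge in the graph contributes a suspension, since the independence complex of a disjoint union is the join, exactly as in Proposition~\ref{disjointunion}. The neighborhoods near the left end, for $m$ large, are
$$N(0)=\{1\},\quad N(1)=\{0,2,4\},\quad N(2)=\{1,3\},\quad N(3)=\{2,4,6\},$$
$$N(4)=\{1,3,5\},\quad N(5)=\{4,6,8\},\quad N(6)=\{3,5,7\},\quad N(7)=\{4,6,8,10\}.$$

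The folding sequence goes as follows.
\begin{enumerate}
\item Since $N(0)=\{1\}$ is contained in both $N(2)$ and $N(4)$, delete $2$ and $4$. Now $1$ is adjacent only to $0$, so $\{0,1\}$ is an isolated edge and $X_m\simeq\Sigma$ of the independence complex on the remaining vertices $3,5,6,7,8,\dots$.
\item In that graph $N(3)=\{6\}$, while $N(5)=\{6,8\}$ and $N(7)\supseteq\{6,8,10\}$. Fold away $5$ and $7$. Then $6$ is adjacent only to $3$, giving a second isolated edge $\{3,6\}$ and a second suspension.
\item What remains is the induced subgraph on $\{8,9,\dots,m-1\}$. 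Since $8$ is even, the shift $i\mapsto i-8$ preserves both edge types, so this subgraph is isomorphic to $\Gamma_{m-8}$. Hence $X_m\simeq\Sigma^2X_{m-8}$.
\end{enumerate}

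The main obstacle is the bookkeeping at the right boundary, where vertices near $m-1$ may be missing. I need to check that each containment $N(u)\subseteq N(w)$ still holds when some of $8,10,\dots$ are absent. This is fine: absent vertices only shrink $N(w)$ on elements that are not in $N(u)$ anyway. I also need $m\ge 9$, so that $X_{m-8}$ is nonempty and the join-with-$\zs^0$ description is the honest suspension (the join of $\zs^0*\zs^0$ with the empty complex is not $\Sigma^2\emptyset$).

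The base cases $m=1,\dots,8$ are then handled by the same folds truncated at $m$, which reduces each to a join of a few isolated edges and small leftover pieces:
\begin{itemize}
\item $m=1,4,5,6$: a leftover isolated vertex (a cone) appears, so $X_m\simeq\ast$.
\item $m=2,3$: we get $\zs^0$.
\item $m=7,8$: we get $\zs^1$.
\end{itemize}
For instance, $m=8$ gives $\zs^0*\zs^0=\zs^1$, agreeing with $X_8=\sm(P_4)\simeq\zs^1$ from Proposition~\ref{shortpaths}. I would verify the cases $m=4,5,6$ carefully, since these are where a cone point must appear. The induction on $k$ then yields the stated table.
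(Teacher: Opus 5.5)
Your proof is correct, but it reaches the recursion $X_m\simeq\Sigma^2X_{m-8}$ by a different route from the paper's.

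The paper imitates the proof of Theorem~\ref{pathhtyptype}. It writes $X_m=\text{St}(0)\cup\text{St}([1,3])\cup\text{St}([1,6])$ and applies the wedge lemma (Proposition~\ref{wedgeprop}) twice: first to $\text{St}(0)\cup\text{St}(1)$, then to $(\text{St}(0)\cap\text{St}([1,3]))\cup(\text{St}(0)\cap\text{St}([1,6]))$, whose pieces are cones with apexes $3$ and $6$. It then identifies the triple intersection, the full subcomplex on $\{8,\dots,m-1\}$, with $X_{m-8}$. The cases $m\le 8$ are done by listing facets.

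You instead delete $2,4,5,7$ by folds. After that the graph is literally two disjoint edges $\{0,1\}$, $\{3,6\}$ plus a shifted copy of the graph defining $X_{m-8}$. So the double suspension appears as an honest join $\zs^0*\zs^0*X_{m-8}$, with no null-homotopy or cone hypotheses to check. The same truncated folds also settle all eight base cases uniformly.

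Your route buys two further things.
\begin{itemize}
\item Each fold is exactly an elementary strong collapse in the Barmak--Minian sense. If $N(u)\subseteq N(w)$, then $u$ is an isolated vertex of the graph obtained by deleting $w$ and its neighbours, so the link of $w$ in the independence complex is a cone with apex $u$. Hence $X_m$ actually strong-collapses, $X_m\sco K$, onto a subcomplex $K\cong\zs^0*\zs^0*X_{m-8}$, which fits the theme of the paper.
\item The mechanical base cases avoid a slip in the paper. Its facet list for $X_7$ omits $[1,3,5]$, which is independent. The four facets it lists span a contractible complex, whereas your folds correctly give $X_7\simeq\zs^0*\zs^0=\zs^1$.
\end{itemize}

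One harmless error on your side: $N(7)=\{6,8,10\}$, not $\{4,6,8,10\}$, since $\{4,7\}$ is not an edge ($4$ is even). Nothing changes, because $4$ has already been deleted when you fold $7$, and only $6\in N(7)$ is used.
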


\begin{proof}
    We compute the $k=0$ case directly. The space $X_1$ is a point; $X_2$ is two points; $X_3$ has three vertices $\{0,1,2\}$ with the single edge $[0,2]$ and is therefore homotopic to $\zs^0$. The space $X_4$ is the space $\sm(P_2)$, which is contractible. The space $X_5$ has vertices $\{0,1,2,3,4\}$, edges $[0,3], [1,3]$, and $2$-simplex $[0,2,4]$; this is contractible. The space $X_6$ is $\sm(P_3)$, which is contractible. The space $X_7$ has facets $[0,2,4,6]$, $[0,2,5]$, $[0,3,5]$, and $[1,6]$; this is homotopic to $\zs^1$ (see Figure \ref{p4diagram}). The space $X_8$ is $\sm(P_4)$, which is homotopic to $\zs^1$.

    Now the proof proceeds exactly like the proof of Theorem \ref{pathhtyptype}: Assume $m\ge 9$ and write $X_m=\text{St}(0) \cup \text{St}(13) \cup \text{St}(16)$. Then we conclude that $X_m\simeq \Sigma^2(\text{St}(0) \cap \text{St}(13) \cap \text{St}(16))$, and this space is $\Sigma^2X_{m-8}$. This completes the proof.
\end{proof}

\begin{remark}
    Given the $4$-periodic behavior exhibited in Theorem \ref{pathhtyptype}, together with the fact that $\sm(P_m)=X_{2m}$, this period $8$ phenomenon is not surprising.
\end{remark}

We could also consider the complex $Y_m$ defined in the same way, but without the restriction that $j$ be odd to exclude pairs $\{j,j+3\}$; that is, we disallow all such pairs as vertices of simplices. Note that $Y_m\subset X_m$. This is a more symmetric object, but there is no obvious connection to the other complexes we have considered in this paper. 

\begin{theorem}\label{ymthm}
    The complexes $Y_m$ have the following homotopy types ($k\ge 0)$:
    $$Y_m\simeq \begin{cases}
        \ast & m=5k+1 \\
        \zs^k & m=5k+2, 5k+3, 5k+4, 5k+5.
    \end{cases}$$
\end{theorem}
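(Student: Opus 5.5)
We view $Y_m$ as the independence complex of the graph $G_m$ on vertices $0,1,\dots,m-1$, with an edge $\{i,j\}$ whenever $|i-j|\in\{1,3\}$. The plan is to establish the recursion
$$Y_m\simeq\Sigma Y_{m-5}\qquad (m\ge 6),$$
using strong collapses in the sense of Barmak--Minian, and then to verify the base cases $m\le 5$ by hand.

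The tool is a standard fact about independence complexes. Suppose $u\neq v$ are nonadjacent vertices of a graph with $N(u)\subseteq N(v)$. Then $\text{lk}(v)$ in the independence complex is a cone with apex $u$: any independent set containing $v$ avoids $N(v)\supseteq N(u)$, so it can be enlarged by $u$. Hence $v$ is dominated, and deleting $v$ from the graph is a strong collapse, which preserves homotopy type. We apply this three times, starting from $G_m$ with $m\ge 6$.

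First, $N(0)=\{1,3\}\subseteq N(2)=\{1,3,5\}$, so we delete $2$. Next, $N(0)=\{1,3\}\subseteq N(4)=\{1,3,5,7\}\cap V$, so we delete $4$. In the remaining graph the neighbors of $1$ were $0,2,4$, so now $N(1)=\{0\}$. Since $N(3)=\{0,6\}$ (its neighbors $2,4$ are gone), we have $N(1)\subseteq N(3)$, and we delete $3$.

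What remains is the isolated edge $\{0,1\}$ together with the induced subgraph on $\{5,6,\dots,m-1\}$. The shift $i\mapsto i-5$ identifies this subgraph with $G_{m-5}$. The independence complex of a disjoint union is the join, and the independence complex of a single edge is $\zs^0$. Therefore
$$Y_m\simeq \zs^0\ast Y_{m-5}=\Sigma Y_{m-5}.$$
For $m=6$ we have $Y_1=\text{pt}$, so $\Sigma Y_1$ is contractible. The only care needed in the recursion is when $m$ is small and vertices such as $5$ or $7$ do not exist; in that case the neighborhood inclusions only become easier to satisfy.

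For the base cases:
\begin{itemize}
\item $Y_1$ is a point.
\item $Y_2$ is two points, i.e.\ $\zs^0$.
\item $Y_3$ is the edge $[0,2]$ together with the isolated vertex $1$, so $Y_3\simeq\zs^0$.
\item $Y_4$ consists of the two disjoint edges $[0,2]$ and $[1,3]$, so $Y_4\simeq\zs^0$.
\item $Y_5$ consists of the triangle $[0,2,4]$ and the edge $[1,3]$, so $Y_5\simeq\zs^0$.
\end{itemize}
This gives $\ast$ for $m=1$ and $\zs^0$ for $m=2,3,4,5$. Induction with $Y_m\simeq\Sigma Y_{m-5}$ then yields the stated formula. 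The only step needing real care is checking the three neighborhood inclusions after each deletion, which is routine bookkeeping.
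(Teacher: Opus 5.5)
Your proof is correct, and it reaches the paper's recursion $Y_m\simeq\Sigma Y_{m-5}$ by a different mechanism.

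The paper writes $Y_m=\text{St}(0)\cup\text{St}(1)$ and uses that a union of two contractible subcomplexes is homotopy equivalent to the suspension of their intersection (Proposition~\ref{wedgeprop}). It then identifies $\text{St}(0)\cap\text{St}(1)$ with the independence complex on $\{5,\dots,m-1\}$, i.e.\ with $Y_{m-5}$.

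You instead perform three elementary strong collapses, deleting $2$, $4$, $3$ by the neighborhood-domination criterion, and land exactly on the simplicial join $\{0,1\}\ast Y_{m-5}$. In effect, you collapse each closed star onto the cone over the paper's intersection. Your route buys the following:
\begin{itemize}
\item A combinatorial statement $Y_m\sco \zs^0\ast Y_{m-5}$, so $Y_m$ and the suspension have the same strong (hence simple) homotopy type, not merely the same homotopy type.
\item No need to justify the covering $Y_m=\text{St}(0)\cup\text{St}(1)$, which the paper asserts without proof. (It holds because a maximal independent set missing $0$ and $1$ must contain $3$, and then $1$ could be added.)
\item A natural fit with the paper's theme and with its closing remark on independence complexes.
\end{itemize}
The paper's route is marginally shorter and needs only the gluing lemma.

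One wording point: your remark that missing vertices ``only make the neighborhood inclusions easier'' is backwards in general, since missing vertices shrink the larger sets $N(2)$, $N(4)$, $N(3)$. The real reason there is no issue is that the smaller sets $N(0)=\{1,3\}$ and, after the deletions, $N(1)=\{0\}$ involve only vertices present for every $m\ge 6$. These sets lie in $N(2)$, $N(4)$, $N(3)$ whether or not $5,6,7$ exist.
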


\begin{proof}
    The proof is similar to that of Theorem \ref{xmthm}, but simpler. We first compute the $k=0$ case directly. The space $Y_1$ is a point; $Y_2$ is two points; $Y_3$ is the interval $[0,2]$ and the vertex $[1]$, so it is homotopy equivalent to $\zs^0$. The space $Y_4$ consists of two disjoint intervals $[0,2]$ and $[1,3]$ and is therefore homotopy equivalent to $\zs^0$. The space $Y_5$ is the disjoint union of $[0,2,4]$ and $[1,3]$; this is also $\zs^0$ up to homotopy.

    Now assume $m\ge 6$ and note that $Y_m=\text{St}(0)\cup \text{St}(1)$. It follows that $Y_m\simeq \Sigma(\text{St}(0)\cap\text{St}(1))$. Observe that any simplex in $\text{St}(0)\cap\text{St}(1)$ cannot involve vertices $0,1,2,3,4$: having $0$ excludes $1$ and $3$, and having $1$ excludes $0$, $2$, and $4$. Thus every facet of the intersection has initial vertex $5$ or $6$. If a facet begins with $5$ then it can be extended by $[0]$ and $[1,3]$ and if it begins with $6$ it can be extended by $[0,2,4]$ and $[1]$. It follows that $\text{St}(0)\cap\text{St}(1)$ is isomorphic to $Y_{m-5}$; that is, $Y_m\simeq \Sigma Y_{m-5}$. This completes the proof.
\end{proof}

\begin{remark}
    Note that the complexes $X_m$ and $Y_m$ are independence complexes of certain graphs. Recall that a set of vertices in a graph $G$ is {\em independent} if no two are adjacent in $G$. The {\em independence complex} $I_G$ is the simplicial complex with vertex set $V(G)$ whose simplices consist of the independent sets of vertices. These objects have been studied extensively in the combinatorics literature (e.g. \cite{Barmak2013},\cite{Jonsson2011},\cite{Kozlov1999}). Define a graph $G(X_m)$ with vertex set $0,1,\dots ,m-1$ and edges $\{i,i+1\}$ for all $i=0,\dots ,m-2$ and $\{j,j+3\}$ for all odd $1\le j\le m-1$. The graph $G(Y_m)$ is defined by adding edges $\{j,j+3\}$ to $G(X_m)$ for all even $0\le j\le m-1$. Let $V$ be the set of vertices with even labels and $W$ the set of vertices with odd labels. Then both $G(X_m)$ and $G(Y_m)$ are bipartite with parts $V$ and $W$ and the corresponding independence complexes are $X_m$ and $Y_m$, respectively. According to Theorem 3.1 of \cite{Jonsson2011}, these spaces have the homotopy type of a suspension of a particular simplicial complex $\Gamma_{G,V}$ with vertex set $V$. The interested reader is invited to verify Theorems \ref{xmthm} and \ref{ymthm} in this context.
\end{remark}

\bibliographystyle{plain}
\bibliography{SDMT.bib}

\end{document}